\documentclass[12pt]{amsart}
\usepackage{float}
\usepackage{subfigure}
\usepackage{tikz}
\usepackage{tkz-euclide}
\usetikzlibrary{shapes.geometric}
\tikzset{
	rline/.style ={color = red, line width =1pt}
}
\tikzset{
	bline/.style ={color = blue, line width =1pt}
}
\tikzset{
	gline/.style ={color = green, line width =1pt}
}
\usepackage[colorlinks,
            linkcolor=black,
            anchorcolor=blue,
            citecolor=black]{hyperref}
\usepackage{cleveref}
\usepackage{graphicx}
\usepackage{amsmath}
\usepackage{amssymb}
\usepackage{bm}
\usepackage{setspace}
\usepackage{amsthm}
\usepackage{appendix}
\allowdisplaybreaks[4]
\usepackage{geometry}
\newtheorem{thm}{Theorem}[section]

\newtheorem{lem}[thm]{Lemma}
\newtheorem{prop}[thm]{Proposition}
\theoremstyle{definition}
\newtheorem{defn}[thm]{Definition}
\theoremstyle{remark}
\newtheorem{rem}[thm]{Remark}
\theoremstyle{conclusion}

\theoremstyle{conjecture}

\numberwithin{equation}{section}
\newcommand{\be}{\begin{equation}}
\newcommand{\ee}{\end{equation}}

\begin{document}
\title[Liouville theorems for $p$-Laplacian equations in convex cones]{Liouville theorems for $p$-Laplacian equations in convex cones without finite-energy condition}

\author{Lu Chen, Wei Dai, Changfeng Gui, Yunpeng Luo}

\address{Key laboratory of Algebraic Lie Theory and Analysis of Ministry of Education,
School of Mathematics and Statistics, Beijing Institute of Technology, Beijing 100081, P. R. China}
\email{chenlu5818804@163.com}

\address{School of Mathematical Sciences, Beihang University (BUAA), Beijing 100191, P. R. China, and Key Laboratory of Mathematics, Informatics and Behavioral Semantics, Ministry of Education, Beijing 100191, P. R. China}
\email{weidai@buaa.edu.cn}

\address{Department of Mathematics, University of Macau, Macau SAR, P. R. China}
\email{changfenggui@um.edu.mo}

\address{School of Mathematical Sciences, Beihang University (BUAA), Beijing 100191, P. R. China}
\email{yunpengluo@buaa.edu.cn}

\thanks{Lu Chen is supported by National Key Research and Development of China (No. 2022YFA1006900) and the National Natural Science Foundation of China (No. 12271027). Wei Dai is supported by the NNSF of China (No. 12222102 \& 12571113), the National Science and Technology Major Project (2022ZD0116401) and the Fundamental Research Funds for the Central Universities. Changfeng Gui is supported by NSFC Key Program (Grant No.12531010), University of Macau research grants CPG2024-00016-FST, CPG2025-00032-FST, CPG2026-00027-FST, SRG2023-00011-FST, MYRGGRG2023-00139-FST-UMDF, UMDF Professorial Fellowship of Mathematics, Macao SAR FDCT 0003/2023/RIA1 and Macao SAR FDCT 0024/2023/RIB1. Yunpeng Luo is supported by the NNSF of China (No. 12571113) and the Fundamental Research Funds for the Central Universities.}

\begin{abstract}
We study the anisotropic Finsler $p$-Laplacian equation
\begin{equation*}
      \left\{
          \begin{aligned}
          &-\Delta ^{H}_{p}u=f(u) \quad\,\,\, &{\rm{in}} \,\, \mathcal{C}, \\
          &{\bf{a}}(\nabla u)\cdot \nu =0 \quad\,\,\, &{\rm{on}} \,\, \partial\mathcal{C},
          \end{aligned}
          \right.
    \end{equation*}
where $N\geq3$, $1<p<N$, $\mathcal{C}\subseteq\mathbb{R}^{N}$ is an open convex cone and $\Delta ^{H}_{p}$ is the anisotropic Finsler $p$-Laplacian operator. If $f(u)$ is nonnegative and subcritical, we prove that every bounded nonnegative solution in $\mathcal{C}$ is identically zero. In particular, for $f(u)=u^{q}$ with $0<q<p^*-1$, we establish a pointwise decay estimate in $\mathcal{C}$ via the doubling argument and blowing-up method and prove that all nonnegative solutions must be zero without the boundedness assumption. Our results are the subcritical counterpart of the classification result for the critical case in \cite{CFR}, and extend the Liouville type theorems in $\mathbb{R}^{N}$ for the standard $p$-Laplacian in \cite{SZ} and for the anisotropic $p$-Laplacian in \cite{CFV, CHN} to general convex cones $\mathcal{C}$. In the critical case $f(u)=u^{p^*-1} $ and typical case $H(\xi)=|\xi|$, for $\frac{N+1}{3}<p<N$, we classified the positive solutions of the critical $p$-Laplacian equation in convex cones $\mathcal{C}$ without finite-energy assumption. This extends the classification result of \cite{Ou} in $\mathbb{R}^{N}$ to general convex cones $\mathcal{C}$, and removes the finite-energy assumption in \cite{CFR} in the typical case $H(\xi)=|\xi|$.
\end{abstract}
\maketitle {\small {\bf Keywords:}  Anisotropic Finsler $p$-Laplacian; convex cones; Liouville type theorems;  classification of solutions; critical $p$-Laplacian equation.  \\

{\bf 2010 MSC} Primary: 35J92; Secondary: 35B06, 35B40.}

\section{Introduction}

\subsection{Background and setting of the problem}
In this paper, we are concerned with the following anisotropic Finsler $p$-Laplacian equation with Neumann boundary condition:
    \begin{equation}\label{eq:1.1}
      \left\{
          \begin{aligned}
          &-\Delta ^{H}_{p}u=f(u) \quad\,\,\, &{\rm{in}} \,\, \mathcal{C}, \\
          &{\bf{a}}(\nabla u)\cdot \nu =0 \quad\,\,\, &{\rm{on}} \,\, \partial\mathcal{C},
          \end{aligned}
          \right.
    \end{equation}
where $N\geq3$,  $\mathcal{C}\subseteq\mathbb{R}^{N}$ is an open convex cone (with vertex set $\mathcal{V}\subseteq\partial\mathcal{C}$) including $\mathbb{R}^{N}$, the half space $\mathbb{R}^{N}_{+}$ and $\frac{1}{2^{m}}$-space $\mathbb{R}^{N}_{2^{-m}}:=\{x\in\mathbb{R}^{N}\mid x_{1},\cdots,x_{m}>0\}$ ($m=1,\cdots,N$). The function $H:\mathbb{R}^N\rightarrow\mathbb{R}$ is a Finsler norm, i.e., $H$ is convex, positively homogeneous of degree $1$ and positive on $\mathbb{S}^{N-1}:=\{\xi\in\mathbb{R}^{N}\mid\,|\xi|=1\}$. The anisotropic $p$-Laplacian $\Delta ^{H}_{p}$ is defined by
\begin{equation*}\label{NL}
  \Delta ^{H}_{p}u:=\operatorname{div}\left({\bf{a}}(\nabla u)\right)=\operatorname{div}\left(H^{p-1}(\nabla u)\nabla H(\nabla u)\right),
\end{equation*}
where
\begin{equation*}\label{a}
  {\bf{a}}(\xi):=H^{p-1}(\xi)\nabla H(\xi), \qquad \forall \,\, \xi\in\mathbb{R}^{N}.
\end{equation*}

\medskip

From now on, without loss of generalities, up to rotations and translations, we can write $\mathcal{C}=\mathbb{R}^{k}\times\mathcal{\widetilde{C}}$ with $k\in\{0,\cdots ,N\}$ and $\mathcal{\widetilde{C}}\subset\mathbb{R}^{N-k}$ is an open convex cone with vertex at the origin $0_{\mathbb{R}^{N-k}}$ which contains no lines, and hence the vertex set $\mathcal{V}=\mathbb{R}^{k}\times\{0_{\mathbb{R}^{N-k}}\}$ (see \cite{DPV} for more details). Note that, $\mathcal{V}=\emptyset$ if $k=N$, $\mathcal{V}=\partial\mathcal{C}$ is a hyperplane if $k=N-1$, and $\mathcal{H}^{N-1}(\mathcal{V})=0$ if $k\leq N-2$. In particular, $\mathcal{C}=\mathbb{R}^{N}$ if $k=N$, $\mathcal{C}=\mathbb{R}^{N}_{+}$ if $k=N-1$ and $\mathcal{\widetilde{C}}=\mathbb{R}_{+}$, and $\mathcal{C}=\mathbb{R}^{N}_{2^{-m}}$ if $k=N-m$ and $\mathcal{\widetilde{C}}=\mathbb{R}^{m}_{2^{-m}}:=\{x\in\mathbb{R}^{m}\mid x_{1},\cdots,x_{m}>0\}$. Note that if $\mathcal{C}=\mathbb{R}^{N}$, there is no Neumann boundary condition in \eqref{eq:1.1}.

\smallskip

Comparing with the regular Euclidean space $(\mathbb{R}^{N},|\cdot|)$, the anisotropic Finsler manifold $(\mathbb{R}^{N},H(\cdot))$ may lose the usual symmetry properties and the directional independence. When $H(\cdot)$ equals to the Euclidean norm $|\cdot|$, the anisotropic $p$-Laplacian $\Delta ^{|\cdot|}_{p}=\Delta_{p}$, where the regular $p$-Laplacian $\Delta_{p}$ is given by $\Delta_{p}u:=\operatorname{div}\left(|\nabla u|^{p-2}\nabla u\right)$ which arises as the first variation of the functional $\int|\nabla u|^p\mathrm{d}x$. Thus it is natural for us to consider the anisotropic $p$-Laplacian $\Delta ^{H}_{p}$ appearing in the Euler-Lagrange equations for Wulff-type functionals $\int H(\nabla u)^p\mathrm{d}x$. For literatures on anisotropic problems and their extensive applications, refer to e.g. \cite{ACF,BFK,BC,CRS,CFR,CL1,CL2,CFV,DGL,DP,DPV,FF,FM,Taylor,WX1,WX2,Wulff,XG,ZZ} and the references therein.

\medskip

Next we state the definition of weak solution for \eqref{eq:1.1}.
\begin{defn}
A function $u\in W^{1,p}_{loc}(\overline{\mathcal{C}})$ is said to be a weak solution to \eqref{eq:1.1} if
\begin{equation}\label{def weak solu}
 \int_{\mathcal{C}} H^{p-1}(\nabla u)\left\langle \nabla H(\nabla u),\nabla \varphi\right\rangle\mathrm{d}x= \int_{\mathcal{C}}f(u)\varphi\mathrm{d}x
\end{equation}
for all $\varphi\in W^{1,p}(\Omega)\cap L^{\infty}(\Omega)$ with $\Omega\subset\mathbb{R}^N$ bounded and $\varphi=0$ on $\partial\Omega\cap\mathcal{C}$.
\end{defn}
For $1<p<N$, let $p^*=\frac{Np}{N-p}$ be the critical exponent for the Sobolev embedding. A nonlinearity $f$ is called subcritical if there exists $1<\alpha<p^*$ such that, for all $u\geq0$,
\begin{equation}\label{subcritical}
f(u)\geq0 \quad \text{and} \quad (\alpha-1)f(u)-uf'(u)\geq0.
\end{equation}

When $f(u)=u^{q-1}$ for some $q>1$, $H(\xi)=|\xi|$ and $p=2$, equation \eqref{eq:1.1} reduces to the well-known Lane-Emden equation. In a celebrated paper \cite{GS}, Gidas and Spruck established the Liouville theorem for the Lane-Emden equation. It states that for $N>2$, $2\leq q<2^*=\frac{2N}{N-2}$, any nonnegative solution $u$ of
$$\Delta u+u^{q-1}=0\qquad \text{in}\,\,\mathbb{R}^N$$
must be trivial. However, this result fails in the critical and supercritical cases $q\geq\frac{2N}{N-2}$. Indeed, at the critical exponent $q=\frac{2N}{N-2}$, the Emden solution
$$u(x)=\left(\frac{C\lambda}{\lambda^2+|x|^2}\right)^{\frac{N-2}{2}}$$
with $\lambda>0$ and $C=\sqrt{N(N-2)}$, provides an explicit counterexample. For related work on the Lane-Emden equation, refer to e.g. \cite{BV,CGS1,PS}.

In another seminal paper, Serrin and Zou \cite{SZ} are concerned with the following nonhomogeneous degenerate elliptic equation
\begin{equation}\label{eq1.8}
 \Delta_p u+f(u)=0,\quad\,\,x\in\Omega,
\end{equation}
where $N\geq2$, $1<p<N$ and $\Omega$ is a domain in $\mathbb{R}^N$. They showed that, if $\Omega=\mathbb{R}^N$ and $f$ is subcritical, then every bounded nonnegative weak solution of \eqref{eq1.8} is trivial. Recently, for the special case $f(u)=u^q$ with $1\leq q<p^*-1$, Roncoroni \cite{R} proved that any nonnegative solution (bounded or not) to \eqref{eq1.8} must be zero. Moreover, Roncoroni raised the open question that whether this result remains valid in the anisotropic setting. Subsequently, Cheng, Huang and Niu \cite{CHN} provided an affirmative answer to this problem by extending the integral inequalities from \cite{SZ} to the anisotropic case (see also \cite{LWY}).

While extensive research has been devoted to these equations on the whole space $\mathbb{R}^N$ or the half-space $\mathbb{R}^N_+$, the Liouville-type theorem for the subcritical $p$-Laplacian equation on a general convex cone $\mathcal{C}$ remains largely unexplored, where $\mathcal{C}$ includes, for instance, the whole space $\mathbb{R}^N$, a half-space $\mathbb{R}^N_+$, or the $\frac{1}{2^m}$-space $\mathbb{R}^N_{2^{-m}}:= \{x\in\mathbb{R}^N \mid x_1,\dots,x_m>0\}$ for $m=1,\dots,N$. This gap in the literature naturally motivates the following two questions:

\smallskip

\textit{$(\rm{1})$ If $f$ is nonnegative and subcritical, are all bounded nonnegative weak solutions of \eqref{eq:1.1} on a general convex cone $\mathcal{C}$ trivial?}

\textit{$(\rm{2})$ If $f(u)=u^q$ with $0< q<p^*-1$, are all nonnegative solutions (including unbounded ones) to \eqref{eq:1.1} on a general convex cone $\mathcal{C}$ trivial? }

\smallskip
The first goal of this paper is to extend the Liouville type theorems in $\mathbb{R}^{N}$ for the standard $p$-Laplacian in Proposition 6.1 of \cite{SZ} and for the anisotropic $p$-Laplacian in Proposition 3.1 of \cite{CHN} to general convex cones $\mathcal{C}$, thereby affirmatively answering the above two questions, as stated in Theorems \ref{Liou} and \ref{liou2}. For optimal Liouville type theorems on general bounded or unbounded domains (including all cone-like domains), see \cite{DQ0,DQ}.

\smallskip
Turning to the critical case, when $f(u)=u^{p^*-1}$ and $\Omega=\mathbb{R}^N$, \eqref{eq1.8} becomes the standard critical $p$-Laplacian equation
\begin{equation}\label{eq1.9}
-\Delta_p u=u^{p^*-1}, \qquad x\in\mathbb{R}^N.
\end{equation}
This equation is intimately connected to the study of the extremals for the Sobolev inequality. In the special case $p=2$, it is also related to the well known Yamabe problem. For $p=2$, under the decay assumption
$$u(x)=O\left(\frac{1}{|x|^{N-2}}\right)\qquad\text{as}\quad|x|\rightarrow\infty, $$
Gidas, Nirenberg and Ni \cite{GNN} employed the method of moving planes to provide a complete classification of all positive solutions to the critical Laplacian equation
$$-\Delta u=u^{\frac{N+2}{N-2}}\qquad\text{in}\quad\mathbb{R}^N,\,\,\,N\geq3.$$
Subsequently, Caffarelli, Gidas and Spruck \cite{CGS2}  removed the decay assumption via the Kelvin transform and established the same result. For the general case $1<p<N$, \eqref{eq1.9} admits a two-parameter family of solutions as follows

\begin{equation}\label{eq1.10}
  \mathcal{U}_{\lambda,x_0}(x)=\left[\frac{\lambda^{\frac{1}{p-1}}N^{\frac{1}{p}}(\frac{N-p}{p-1})^{\frac{p-1}{p}}}{\lambda^{\frac{p}{p-1}}+|x-x_0|^{\frac{p}{p-1}}}\right]^{\frac{N-p}{p}},
\end{equation}
where $\lambda>0$ and $x_0\in\mathbb{R}^N$. While for $p\neq2$, the Kelvin transform is no longer applicable for equation \eqref{eq1.9}. This renders the classification of positive solutions to \eqref{eq1.9} considerably more challenging. Under the finite energy assumption
\begin{equation}\label{finite energy}
  \int_{\mathbb{R}^N}|\nabla u|^p\mathrm{d}x<+\infty,
\end{equation}
Damascelli, Merch\'an, Montoro and Sciunzi \cite{DMMS} proved that for $\frac{2N+2}{N+2}<p<2$, all positive weak solutions of \eqref{eq1.9} are the form \eqref{eq1.10}. Subsequently, by exploiting the asymptotic behavior at infinity of solution $u$ and carrying out the method of moving planes, V\'etois \cite{Ve} and Sciunzi \cite{Sc} significantly improved the result  to the range of $1<p<2$ and $2<p<N$, respectively. More recently, by exploiting the sharp asymptotic behavior at infinity of solution $u$ and $\nabla u$, Ciraolo, Figalli and Roncoroni \cite{CFR} extended these results for all $p\in(1,N)$ for the following critical anisotropic $p$-Laplacian equation in a general convex cone
 \begin{equation*}
      \left\{
          \begin{aligned}
          &-\Delta ^{H}_{p}u=u^{p^*-1} \quad\,\,\, &{\rm{in}} \,\, \mathcal{C}, \\
          &{\bf{a}}(\nabla u)\cdot \nu =0 \quad\,\,\, &{\rm{on}} \,\, \partial\mathcal{C}.
          \end{aligned}
          \right.
    \end{equation*}
Being different from \cite{DMMS,Ve,Sc}, they used a new approach based on integral identities rather than moving planes, which is inspired by \cite{SZ}. One should note that, the classification results in \cite{CFR,DMMS,Sc,Ve} rely crucially on the finite energy condition.

\smallskip

A crucial advancement without the finite energy assumption \eqref{finite energy} was achieved by Catino, Monticelli and Roncoroni \cite{CMR}. They obtained the classification result for the positive solutions of \eqref{eq1.9} without the finite energy assumption \eqref{finite energy} in dimensions $N=2,3$ with $\frac{N}{2}<p<2$. This result was substantially extended by Ou \cite{Ou} to the significantly broader range $\frac{N+1}{3}<p<N$ for $N\geq2$. Although the approaches in \cite{Ou}, \cite{CMR}, and the earlier work \cite{CFR} all rely on the integral identities introduced by Serrin and Zou \cite{SZ}, the key ingredients in \cite{Ou} and \cite{CMR} diverge sharply from those in \cite{CFR} precisely due to the absence of the finite-energy assumption. Actually, due to the lackness of finite energy condition, the pointwise estimates of solution $u$ and $\nabla u$ established in \cite{CFR} are no longer available.  Consequently, the proofs in \cite{Ou} and \cite{CMR} depend primarily on integral decay estimates of the solutions.

\smallskip

In contrast to the well-developed theory on $\mathbb{R}^N$, the classification of positive solutions for the critical equation on general convex cones without the finite energy assumption remains an open problem. The second goal of this paper is to address this gap. We establish the classification results for the critical $p$-Laplacian equation on any convex cone $\mathcal{C}$ without any finite energy assumption, thereby extending the result of \cite{Ou} in $\mathbb{R}^N$ to a more general domain, see Theorem \ref{cla}. We also mention that the classification of anisotropic Sobolev equation without finite volume constraints on the whole space has been recently established by Chen-Wu-Yang-Yan in \cite{CWYY} employing the invariant tensors technique under some suitable assumption on anisotropic norm. For more classification, Liouville type results and other related results on $p$-Laplacian equations with $1<p\leq N$, c.f. \cite{CDL,CWYY,CHN,CFR,CL1,CL2,DDGL,DGHP,DGL,DLL,Da,DFSV,DMMS,DHYZ,DWY,LWY,Sc,Ve1,Ve,Zhou} and the references therein. For sphere covering inequality and its applications on classification results, please refer to \cite{GM}. For classification of solutions to semi-linear equations on Heisenberg group or CR manifolds via Jerison-Lee identities and invariant tensor techniques, see \cite{MO,MOW} and the references therein.

\subsection{Main results}
We begin by establishing the second-order regularity of weak solutions up to the boundary $\partial\mathcal{C}$. Subsequently, via an approximation approach, we extend the integral identities of Propositions 6.1 and 7.1 in \cite{SZ} (corresponding to the cases $p\geq2$ and $1<p<2$, respectively) to the anisotropic setting in general convex cones $\mathcal{C}$ (Proposition \ref{inte ineq}). On this basis, we derive a series of integral inequalities in convex cones. These form the foundation for our first main result, a Liouville theorem for bounded nonnegative solutions, which is stated as the following theorem.

\begin{thm}\label{Liou}
Let $N\geq3$ and $\mathcal{C}\subseteq\mathbb{R}^{N}$ be an open convex cone with vertex set $\mathcal{V}$. Assume that $f\in C[0,+\infty)\cap C^{1}(0,+\infty)$ is subcritical with $f(t)>0$ for all $t>0$, and that $H\in C^{2}_{\text{loc}}(\mathbb{R}^{N}\setminus\{0\})$ is a Finsler norm such that $H^{2}$ is uniformly convex, i.e., there exist constants $0<\sigma_{1}\leq\sigma_{2}$ such that
\begin{equation}\label{uniformly convex}
\sigma_1 \mathrm{Id}\leq H(\xi)\nabla^2H(\xi)+\nabla H(\xi)\otimes \nabla H(\xi)\leq\sigma_2\mathrm{Id}, \qquad \forall \,\, \xi\in\mathbb{R}^N\setminus\{0\}.
\end{equation}
 Let $u(x)\in W^{1,p}_{loc}(\overline{\mathcal{C}})$ be a bounded nonnegative weak solution
 of \eqref{eq:1.1}. Then
 $$u(x)\equiv 0\qquad\text{in}\quad\mathcal{C}.$$
\end{thm}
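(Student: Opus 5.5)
We plan to follow the Serrin--Zou integral-identity method (Propositions 6.1 and 7.1 of \cite{SZ}), adapted to the anisotropic setting and to the cone, as announced just before the statement. The first step is regularity. Since $H^2$ is uniformly convex by \eqref{uniformly convex} and $u$ is bounded, standard $C^{1,\beta}$ estimates give $u\in C^{1,\beta}_{loc}(\overline{\mathcal{C}}\setminus\mathcal{V})$, and in fact up to the vertex set, by reflection or flattening arguments for convex cones. We would also prove ${\bf a}(\nabla u)\in W^{1,2}_{loc}(\overline{\mathcal{C}})$, and for $p\ge2$ the bound $H^{p-2}(\nabla u)|\nabla^2u|^2\in L^1_{loc}$, by regularizing the operator (replacing $H^p$ by $(\varepsilon^2+H^2)^{p/2}$). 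These bounds are needed to justify the integrations by parts below.

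Next we would establish the key identity. For a suitable weight $v=u^{-\gamma}$ (or $u^{a}$ after shifting $u\mapsto u+\delta$ to avoid division by zero) and a cut-off $\varphi_R$ supported in $B_{2R}$, we consider the vector field
\[
W=u^{a}\,H^{b}(\nabla u)\,\big({\bf a}(\nabla u)\cdot\nabla\big){\bf a}(\nabla u)-\text{(trace correction)},
\]
the anisotropic analogue of the Serrin--Zou field. Its divergence, computed via a Bochner-type pointwise identity for $\operatorname{div}{\bf a}(\nabla u)$, has the form
\[
\operatorname{div}W \ge c\,u^{a}H^{b}\Big(|\text{trace-free part of } \nabla{\bf a}(\nabla u)|^2+\kappa\, u^{-2}H^{2p}(\nabla u)\Big)
\]
plus terms involving $f$ and $f'$. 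The subcriticality condition \eqref{subcritical} with $\alpha<p^*$ makes the $f$-terms and the $u^{-2}H^{2p}$ term nonnegative, with a strict positive coefficient, once $a$ is chosen in the admissible window that exists exactly because $\alpha<p^*$.

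The central new point is the boundary term $\int_{\partial\mathcal{C}}\varphi_R\,W\cdot\nu$. We would show this is $\le0$ (so it can be dropped) using the Neumann condition ${\bf a}(\nabla u)\cdot\nu=0$ together with convexity. Differentiating ${\bf a}(\nabla u)\cdot\nu=0$ tangentially on the flat faces gives $(({\bf a}\cdot\nabla){\bf a})\cdot\nu=-\mathrm{II}({\bf a},{\bf a})\le0$, where the second fundamental form $\mathrm{II}$ is nonnegative by convexity; here $\mathrm{II}=0$ on flat pieces. The vertex set $\mathcal{V}$ has $\mathcal{H}^{N-1}(\mathcal{V})=0$ when $k\le N-2$, so we would remove it with a capacity cut-off, using the local integrability from the first step. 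This step is exactly the argument of \cite{CFR}, and I expect it to carry over.

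Integrating against $\varphi_R^m$ then gives an inequality of the form
\[
\int_{B_R\cap\mathcal{C}}\Big(u^{a-2}H^{2p+b}(\nabla u)+u^a H^b f(u)\,u^{-1}H^p(\nabla u)\Big)\le \frac{C}{R}\int_{B_{2R}\cap\mathcal{C}} u^{a}H^{b}|\nabla{\bf a}(\nabla u)|\,H^{p-1}.
\]
Applying Young's inequality, absorbing terms, and using the Caccioppoli-type bound $\int_{B_{2R}}u^{a-2}H^{p}\le CR^{N-p}\sup u^{a-p}$ (valid since $u$ is bounded and $f\ge0$), we obtain
\[
\int_{B_R\cap\mathcal{C}} u^{a-2}H^{2p+b}\le C R^{N-2p-\ldots}\to0 \quad\text{as } R\to\infty,
\]
for appropriate exponents. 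Equivalently, we iterate in $R$ as in \cite{SZ} for $N\ge3$. Hence $\nabla u\equiv0$, so $u$ is constant, and then $f(u)=0$ forces $u\equiv0$, since $f>0$ on $(0,\infty)$.

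The main obstacle will be the regularity and justification step. The identity requires second derivatives of ${\bf a}(\nabla u)$ up to $\partial\mathcal{C}$, including near the vertex set where the boundary is not smooth, together with the degeneracy at $\{\nabla u=0\}$ (and singularity when $p<2$). I would handle this by proving the estimates for the regularized operator with the Neumann condition on smoothed convex approximating domains, then passing to the limit using uniform bounds that depend only on $\sigma_1,\sigma_2$ and $\sup u$.
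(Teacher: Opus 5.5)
Your overall architecture matches the paper's. You use a Serrin--Zou vector field built from $u^a{\bf a}(\nabla u)$, you get the sign of the boundary term from ${\bf a}(\nabla u)\cdot\nu=0$ and $\mathrm{II}\ge0$, and you justify everything through smooth convex approximating cones and a regularized operator. The genuine gap is in the last step, which is where the theorem is actually decided.

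\textbf{Why your closing step fails.} Knowing only $0<u\le M$ and $f\ge0$, the identity after Young's inequality gives, in scale-invariant form,
\[
\int_{B_R\cap\mathcal C}u^{w-2}H^{2p}(\nabla u)\le CR^{-2p}\int_{B_{2R}\cap\mathcal C}u^{w+2p-2}.
\]
This is at best $CR^{N-2p}$.
\begin{itemize}
\item A Caccioppoli bound of order $R^{N-p}$, such as the one you quote, has exactly the same scaling, so interpolating with it gains nothing.
\item Hence for $N\ge 2p$, which includes every $p\le\frac32$, no choice of weights gives a negative exponent.
\item When the weight exponent is negative, an upper bound on $u$ is useless. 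This happens for $N\le 2p-1$, where $\sigma=2p-p_*\le0$.
\item Your right-hand side still contains $|\nabla{\bf a}(\nabla u)|$ after you discarded the trace-free term on the left. The paper integrates $-\int u^{2a+b}({\bf a}\cdot\nabla{\bf a})\cdot\nabla\varphi$ by parts once more, killing the boundary term again with ${\bf a}\cdot\nu=0$. As a result, Proposition~\ref{prop4.1} has only ${\bf a}(\nabla u)$, $f(u)$, $\nabla\varphi$ and $\nabla^2\varphi$ on the right.
\end{itemize}

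\textbf{The missing ingredients.}
\begin{itemize}
\item \emph{Lower bound on $f$.} Boundedness enters only through subcriticality: $t\mapsto f(t)t^{1-\alpha}$ is nonincreasing, so $f(u)\ge\lambda u^{\alpha-1}$ with $\lambda=f(M)M^{1-\alpha}>0$.
\item \emph{Second test function.} Test the equation with $\eta^kf(u)u^{2-d-p_*}$ to bring $\int\eta^kf^2(u)u^{2-d-p_*}$ to the left (Lemma~\ref{lem4.2}). This gives
\[
\int\eta^kf^2(u)u^{2-d-p_*}\le CR^{-2p}\int u^{\sigma-d}\eta^{k-2p}.
\]
\item \emph{Case $N\ge 2p-\frac12$.} Apply Young's inequality with exponents $\big(\frac{\tau-d}{\sigma-d},\frac{\tau-d}{\tau-\sigma}\big)$, where $\tau=2\alpha-p_*$, together with the lower bound on $f$. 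This absorbs the right side and leaves $CR^{N-p(\tau-d)/(\alpha-p)}$, whose exponent is negative by Lemma~\ref{lem4.5}. This step needs $\alpha>p$. You may assume it, since subcriticality with $\alpha$ implies subcriticality with every larger $\alpha'<p^*$; the paper instead treats $\alpha<p$ and $\alpha=p$ separately.
\item \emph{Case $N<2p$.} Integrals of negative powers of $u$ appear, and you need the weak Harnack inequality in the cone (Lemma~\ref{Hanack}) to bound them by the reciprocal of $\int u^\gamma$. This yields $\|f(u)u^{1-p}\|_{L^\theta(B_R\cap\mathcal C)}\le CR^{N/\theta-p}$ with $\theta\in(\frac Np,\varrho)$ (Lemma~\ref{lem4.4}).
\end{itemize}
Without the lower bound on $f$, the second test function, the dimension split and the Harnack inequality, ``iterate in $R$ as in \cite{SZ}'' does not close the argument.
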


For the special case $f(u)=u^q$, where $1\leq q<p^*-1$, combining Theorem \ref{Liou} with a pointwise decay estimate of solutions in convex cone $\mathcal{C}$ (Lemma \ref{decay}), we remove the boundedness assumption of solutions and establish the following Liouville type theorem for all nonnegative solutions.

\begin{thm}\label{liou2}
  Let $N\geq3$ and  $\mathcal{C}\subseteq\mathbb{R}^{N}$ be an open convex cone with vertex set $\mathcal{V}$. Assume that $f(u)=u^{q}$ with $0<q<p^*-1$, and  $H\in C^2_{loc}(\mathbb{R}^N\setminus \{0\})$ is a Finsler norm, such that $H^2$ is uniformly convex. Let $u(x)\in W^{1,p}_{loc}(\overline{\mathcal{C}})$ be a nonnegative weak solution
 of \eqref{eq:1.1}. Then
$$u(x)\equiv 0\qquad\text{in}\quad\mathcal{C}.$$
\end{thm}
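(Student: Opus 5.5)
The plan is to reduce Theorem \ref{liou2} to Theorem \ref{Liou}: first show that every nonnegative weak solution is bounded, then apply Theorem \ref{Liou}. The boundedness comes from the pointwise decay estimate announced as Lemma \ref{decay}. Note that $f(u)=u^q$ is subcritical in the sense of \eqref{subcritical}: take any $\alpha\in(q+1,p^*)$; this interval is nonempty because $q<p^*-1$, and then
$$(\alpha-1)u^q-qu^q=(\alpha-1-q)u^q\ge0.$$
(If $q+1\le1$, any $\alpha\in(1,p^*)$ works.) Moreover $f\in C[0,\infty)\cap C^1(0,\infty)$ and $f(t)>0$ for $t>0$, so the structural hypotheses of Theorem \ref{Liou} hold.

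The key step is the decay estimate
$$u(x)\le C\,\mathrm{dist}(x,\mathcal{V})^{-\frac{p}{q+1-p}}\quad\text{(or } \le C|x|^{-\frac{p}{q+1-p}}\text{)}$$
in the superlinear regime $q>p-1$. I would prove it by the doubling lemma of Pol\'a\v{c}ik--Quittner--Souplet combined with blow-up. Assume the estimate fails, and set $M(x)=u(x)^{(q+1-p)/p}$. The doubling lemma produces points $y_k$ such that $M(y_k)\,d(y_k)\ge 2k$ and $M\le 2M(y_k)$ on the ball of radius $k/M(y_k)$ around $y_k$ intersected with $\overline{\mathcal{C}}$. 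Rescale by
$$v_k(z)=\lambda_k^{p/(q+1-p)}\,u(y_k+\lambda_k z),\qquad \lambda_k=M(y_k)^{-1}.$$
Since $\Delta^H_p$ is homogeneous and the Neumann condition is scale invariant, $v_k$ solves the same problem in the translated and dilated cone, with $v_k(0)=1$ and $v_k\le 2^{p/(q+1-p)}$ on growing balls.

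Uniform $C^{1,\beta}$ estimates up to the flat or conical boundary, which follow from the regularity established earlier together with the uniform convexity \eqref{uniformly convex}, let us pass to a limit. The limiting domain is $\mathbb{R}^N$, a half space, or a cone $\mathbb{R}^{k'}\times\widetilde{\mathcal{C}}'$ that is again convex. Here it matters that $y_k$ stays away from the vertex set relative to the scale, which is why the distance to $\mathcal{V}$ enters. The limit $v$ is a bounded nonnegative solution with $v(0)=1$, contradicting Theorem \ref{Liou}.

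The sublinear and linear cases $q\le p-1$ need a separate argument. For these I would use a direct integral or test-function argument: test the equation with a cutoff $\varphi_R^{s}$, which has no boundary term thanks to the Neumann condition, and use Young's inequality to get $\int_{B_R\cap\mathcal{C}}u^q\le CR^{N-p}\cdot(\cdots)$. Combined with a weak Harnack-type lower bound, $u^q$ is superharmonic-like. Alternatively, the standard fact that $-\Delta_p^H u\ge0$ in a cone of dimension $N>p$ forces $u\gtrsim |x|^{-(N-p)/(p-1)}$ gives a contradiction exactly when $q(N-p)/(p-1)<\ldots$, i.e., for $q<p^*-1$ via a Serrin--Zou type nonexistence for the inequality $-\Delta^H_p u\ge u^q$. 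I am treating that nonexistence result as available; it is not established in the paper above.

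Once decay is known, $u$ is bounded away from $\mathcal{V}$. Near $\mathcal{V}$, local boundedness follows from Moser iteration up to the convex boundary, since the convex cone supports the Sobolev inequality. A decay in $|x|$ would give global boundedness directly. Theorem \ref{Liou} then yields $u\equiv0$.

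The main obstacle is the blow-up near the vertex set. There the distance to the boundary of the cone is not comparable to the distance to $\mathcal{V}$, so one must check that the rescaled domains converge to a convex cone and that the $C^{1,\beta}$ bounds are uniform near non-smooth boundary points. I would try first to work with $d(x)=\mathrm{dist}(x,\mathcal{V})$ and exploit the fact that translations along $\mathcal{V}$ and dilations leave $\mathcal{C}$ invariant.
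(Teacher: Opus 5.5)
There is a genuine gap in your superlinear case, and it comes from the choice $d(x)=\mathrm{dist}(x,\mathcal{V})$ that you flag as important.

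\begin{itemize}
\item With that choice, the doubling lemma only produces blow-up points whose rescaled distance to $\mathcal{V}$ tends to infinity. So the resulting bound $u\le C\,\mathrm{dist}(x,\mathcal{V})^{-\frac{p}{q+1-p}}$ carries no uniform information near $\mathcal{V}$.
\item For $1\le k\le N-1$ the set $\mathcal{V}=\mathbb{R}^k\times\{0\}$ is unbounded. For the half-space it is all of $\partial\mathcal{C}$, and your estimate reduces to the interior bound $u\le Cx_N^{-\frac{p}{q+1-p}}$, which never uses the Neumann condition.
\item Moser iteration near $x_0\in\mathcal{V}$ gives only $\sup_{B_1(x_0)\cap\mathcal{C}}u\le C(x_0)$. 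Here $C(x_0)$ depends on integral norms of $u$ on $B_2(x_0)\cap\mathcal{C}$, which are not controlled uniformly along $\mathcal{V}$. So you never reach the global boundedness that Theorem \ref{Liou} requires.
\end{itemize}

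The worry behind this choice is unfounded. Since $\lambda\mathcal{C}=\mathcal{C}$, every rescaled domain is a translate $\mathcal{C}-w_k$. If $w_k$ stays bounded, the limit is a translated copy of $\mathcal{C}$, and Theorem \ref{Liou} applies to it as is. This is Case 1 in the paper's Lemma \ref{decay}. There the limit is taken using interior $C^{1,\theta}$ bounds and uniform $C^{0,\theta}$ bounds up to $\partial\mathcal{C}$, so no $C^{1,\beta}$ estimate at non-smooth boundary points is needed.

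The paper proves a localized universal bound instead:
\[
u(x)\le C\,\mathrm{dist}(x,\partial\Omega\cap\mathcal{C})^{-\frac{p}{q+1-p}}
\]
for solutions in $\Omega\cap\mathcal{C}$, with $C$ independent of $\Omega$ and $u$. Only $\partial\Omega\cap\mathcal{C}$ counts here, neither $\partial\mathcal{C}$ nor $\mathcal{V}$. Taking $\Omega=B_R$ and letting $R\to\infty$ gives $u\equiv0$ at once, with no separate boundedness step. Your fallback $d(x)=|x|$, or simply $\Gamma=\emptyset$ in the doubling lemma, also works, precisely because it lets blow-up points approach $\mathcal{V}$.

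For $0<q\le p-1$ you have not actually given an argument.

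\begin{itemize}
\item Testing with a pure cutoff leaves $\int\mathbf{a}(\nabla u)\cdot\nabla\eta^s$ uncontrolled. Controlling it is exactly why one tests with negative powers of $u$.
\item The nonexistence you invoke for $-\Delta^H_pu\ge u^q$ is false in the range you state. Already in $\mathbb{R}^N$ this inequality has positive solutions once $q>\frac{N(p-1)}{N-p}$, which is below $p^*-1$.
\item In the range you actually need, the statement is exactly Lemma \ref{lem2.10} of the paper, applied with exponent $q+1\in(1,p]$. So it is available after all.
\end{itemize}

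The paper itself tests the equation with $u^\beta\eta^\gamma$. For $q<p-1$ and $\beta<1-p$ sufficiently negative, this gives $\int_{B_R\cap\mathcal{C}}u^{q+\beta}\,\mathrm{d}x\le CR^{N-\frac{(\beta+q)p}{q+1-p}}\to0$. For $q=p-1$ with $\beta=1-p$, it gives $|B_R\cap\mathcal{C}|\le CR^{N-p}$. Both are contradictions.
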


\begin{rem}
Theorems \ref{Liou} and \ref{liou2} extend the Liouville type theorems in $\mathbb{R}^{N}$ for the standard $p$-Laplacian in \cite{SZ} and for the anisotropic $p$-Laplacian in \cite{CFV,CHN} to general convex cones $\mathcal{C}$, and can be regarded as the subcritical counterpart of the classification result for the critical case in \cite{CFR}. The exponent range in our Liouville Theorem \ref{Liou} is more wider than that in \cite{CHN}, since we require $1<\alpha<p^*$ while \cite{CHN} require $p<\alpha<p^*$ in the definition \eqref{subcritical} of ``subcritical".
\end{rem}

In the critical case $f(u)=u^{p^*-1}$, thanks to Proposition \ref{prop4.1}, we generalize Proposition 2.3 of \cite{Ou} to the setting of a general convex cone $\mathcal{C}$. Then along with Lemma \ref{lem5.5} and the corresponding integral estimates of solutions in convex cone $\mathcal{C}$, we establish the following classification theorem.

\begin{thm}\label{cla}
  Let $N\geq3$ and $\mathcal{C}$ be a convex cone in $\mathbb{R}^{N}$ with vertex set $\mathcal{V}$ and write $\mathcal{C}=\mathbb{R}^{k}\times\mathcal{\widetilde{C}}$, where $k\in\{0,\cdots ,N\}$ and $\mathcal{\widetilde{C}}\subset\mathbb{R}^{N-k}$ is an open convex cone with vertex at the origin $0_{\mathbb{R}^{N-k}}$ which does not contain a line.
  If $H(\xi)=|\xi|$ and $f(u)=u^{p^*-1}$, and assume further that $\frac{N+1}{3}<p<N$. Let $u$ be a positive weak solution of \eqref{eq:1.1}. Then $u=U_{\lambda,x_0}$, where
  $$U_{\lambda,x_0}=\left(\frac{\lambda^{\frac{1}{p-1}}N^{\frac{1}{p}}(\frac{N-p}{p-1})^{\frac{p-1}{p}}}{\lambda^{\frac{p}{p-1}}+|x-x_0|^{\frac{p}{p-1}}}\right)^{\frac{N-p}{p}} \qquad with\,\, \lambda>0\,\,and \,\,x_0\in \mathcal{V}.$$
  More precisely, \\
  $(\rm{i})$ if $k=N$, then $\mathcal{C}=\mathbb{R}^N$ and $x_0$ may be a generic point in $\mathbb{R}^N$;\\
  $(\rm{ii})$ if  $k\in\{1,\cdots ,N-1\}$, then $x_0\in\mathbb{R}^k\times\{0_{\mathbb{R}^{N-k}}\}$;\\
  $(\rm{iii})$ if $k=0$, then $x_0=0$.
\end{thm}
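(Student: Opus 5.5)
The plan is to follow Ou's strategy in $\mathbb{R}^N$, adapted to the cone. The Neumann condition $|\nabla u|^{p-2}\nabla u\cdot\nu=0$ on $\partial\mathcal{C}$, together with convexity of $\mathcal{C}$, makes every boundary term in the integral identities either vanish or carry a favourable sign. Set $v=u^{-\frac{p}{N-p}}$, so that for the bubble $U_{\lambda,x_0}$ the function $v$ is a multiple of $\lambda^{\frac{p}{p-1}}+|x-x_0|^{\frac{p}{p-1}}$. Then $v$ solves a quasilinear equation of the form
\begin{equation*}
\Delta_p v=\frac{N(p-1)}{p}\,\frac{|\nabla v|^p}{v}+c_{N,p}\,v^{-1},
\end{equation*}
with the same Neumann condition. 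With $X=|\nabla v|^{p-2}\nabla v$, the Serrin--Zou type identity (Proposition \ref{inte ineq} and Proposition \ref{prop4.1}) controls the traceless part $E=\nabla X-\frac{\operatorname{tr}(\nabla X)}{N}\mathrm{Id}$. Concretely, multiply by $v^{1-N}\eta^{\gamma}$, where $\eta$ is a cutoff supported in $B_{2R}\cap\overline{\mathcal{C}}$, and integrate by parts. This should give
\begin{equation*}
\int_{\mathcal{C}} v^{1-N}\,\mathrm{tr}(E^2)\,\eta^{\gamma}\,\mathrm{d}x\;\le\;C\int_{\mathcal{C}} v^{1-N}|E|\,|X|\,|\nabla v|\ldots|\nabla\eta|\,\eta^{\gamma-1}\,\mathrm{d}x+(\text{boundary term}).
\end{equation*}
The boundary term is $-\int_{\partial\mathcal{C}}v^{1-N}\eta^\gamma\,\langle \nabla X\cdot X,\nu\rangle$. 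Because $X\cdot\nu=0$ on $\partial\mathcal{C}$, this equals $\int_{\partial\mathcal{C}}v^{1-N}\eta^\gamma\,\mathrm{II}(X,X)$, where $\mathrm{II}$ is the second fundamental form, and by convexity it has the good sign. This boundary computation needs the second-order regularity up to $\partial\mathcal{C}$ established earlier. On the singular set $\mathcal{V}$ I plan to justify it by an approximation/capacity argument, since $\mathcal{H}^{N-1}(\mathcal{V})=0$ when $k\le N-2$.

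The second step is to absorb the right-hand side by Young's inequality, which leaves
\begin{equation*}
\int_{B_R\cap\mathcal{C}} v^{1-N}\,\mathrm{tr}(E^2)\,\mathrm{d}x\le C\int_{(B_{2R}\setminus B_R)\cap\mathcal{C}} v^{1-N}|\nabla v|^{2p-2}|\nabla\eta|^2\,\mathrm{d}x ,
\end{equation*}
together with variants that have different weights, which are needed in the range $p<2$. The key input is then the integral decay estimates without finite energy. Testing the equation for $u$ with $u^{\beta}\eta^{m}$ (Lemma \ref{lem5.5} transplanted to the cone, where the Neumann condition again kills the boundary terms) gives a bound of the form $\int_{B_R\cap\mathcal{C}} u^{s}\le CR^{N-\frac{(N-p)s}{p}\cdot\ldots}$, plus the corresponding bounds for $|\nabla u|^p u^{t}$. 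In these estimates the cone only changes constants, since $|B_R\cap\mathcal{C}|\sim R^N$. In particular, a local Harnack inequality near $\partial\mathcal{C}$ holds by even reflection for flat pieces, and in general by the $W^{1,p}$ Neumann Moser iteration on the Lipschitz domain $B_R\cap\mathcal{C}$. Translating these bounds to $v$, the right-hand side of the previous display is $O(R^{-\delta})$ exactly when $\frac{N+1}{3}<p<N$, as in Ou. This exponent bookkeeping, especially for $p<2$ where $\nabla X$ degenerates and one works with $\{\nabla v\neq0\}$ with $\varepsilon$-regularized weights, is where I expect the main obstacle.

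Letting $R\to\infty$ gives $E\equiv0$, that is, $\nabla(|\nabla v|^{p-2}\nabla v)=\mu(x)\,\mathrm{Id}$. Symmetry of mixed derivatives then forces $\mu$ to be constant, so $|\nabla v|^{p-2}\nabla v=\mu(x-x_0)$ and $v=a+b|x-x_0|^{\frac{p}{p-1}}$. Plugging this into the equation fixes the constants, so $u=U_{\lambda,x_0}$. Finally, the Neumann condition $\nabla u\cdot\nu=0$ on $\partial\mathcal{C}$ requires $(x-x_0)\cdot\nu=0$ on every face. Since $x\cdot\nu=0$ on a cone with vertex at the origin, this means $x_0\cdot\nu=0$ for all normals, which places $x_0\in\mathcal{V}=\mathbb{R}^k\times\{0\}$ and yields cases (i)--(iii).
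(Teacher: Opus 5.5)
Your outline matches the paper in its overall shape: the substitution $v=u^{-\frac{p}{N-p}}$, the traceless tensor $E$ with the boundary term controlled by the Neumann condition and convexity (Proposition \ref{inte ineq}), the integral bounds of Lemma \ref{lem5.5}, and the rigidity step leading to $x_0\in\mathcal{V}$. There is, however, a genuine gap in the decisive decay step. With the weight $v^{1-N}\eta^{\gamma}$ alone, you must show that $R^{-2}\int_{B_{2R}\cap\mathcal{C}}v^{1-N}|\nabla v|^{2p-2}\,\mathrm{d}x\to0$. Without finite energy, the only a priori control is Lemma \ref{lem5.5}, which requires $r\le p$ and $q<\alpha+1=N+1-\frac{N}{p}$.

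In your integral $q=N-1$ and $r=2p-2$, so you need both $p>\frac N2$ and $p\le 2$. That range is empty for $N\ge4$, and for $N=3$ it misses $(\frac43,\frac32]\cup(2,3)$. Your vague ``variants with different weights for $p<2$'' does not repair this. The claim that the bookkeeping works ``exactly when $\frac{N+1}{3}<p<N$, as in Ou'' is false for the inequality you actually wrote down.

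The missing idea is Ou's second weight $g^{-m}$, where $g=\Delta_p v=\alpha v^{-1}|\nabla v|^p+\beta v^{-1}$ and $m=\frac{p-1}{p}-\varepsilon_0$.

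\emph{Getting the weighted inequality.} The paper first derives the $m=0$ inequality, then passes to general $m$ by testing with $(g^{\varepsilon})^m\varphi$ and letting $\varepsilon\to0$. This uses $g\in W^{1,2}_{loc}(\overline{\mathcal{C}})$ and $g_i=NE_i$ (Proposition \ref{pro2.4}).

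\emph{The bad term and its absorption.} Testing with $\eta^\theta g^{-m}$ creates the extra term $-Nm\int\eta^\theta v^{1-N}g^{-m-1}\mathbf{a}_j(\nabla v)E_{ij}E_i$, which has the wrong sign. To absorb it:
\begin{itemize}
\item write $v^{1-N}g^{-m}=v^{-N}g^{-m-1}(\alpha|\nabla v|^p+\beta)$;
\item use Ou's pointwise inequality $\mathbf{Tr}\{E^2\}\ge\frac{v_j}{|\nabla v|}E_{ij}\frac{v_k}{|\nabla v|}E_{ki}$;
\item use $\alpha-Nm=N\varepsilon_0>0$.
\end{itemize}

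\emph{The payoff on the right-hand side.} By $|\nabla v|^p\le\frac{p}{N(p-1)}vg$, the integrand $v^{1-N}g^{-m}|\nabla v|^{2p-2}$ is bounded by $Cv^{-\frac{Np-3p+2}{p}}g^{\frac{p-1}{p}+\varepsilon_0}$. Expanding $g$ then gives at most $Cv^{-Q}(|\nabla v|^{p-1+p\varepsilon_0}+1)$ with $Q=N-2+\frac1p+\varepsilon_0$. Now the gradient power is below $p$. The condition $Q<\alpha+1$ is exactly $p>\frac{N+1}{3-\varepsilon_0}$, and in the case $r>Q$ the exponent $N-\frac{pQ-r}{p-1}=\frac{3p-N-2}{p-1}<2$ holds because $p<N$. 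Lemma \ref{lem5.5} then gives $o(R^2)$ for the integral over $B_{2R}\cap\mathcal{C}$, which closes the argument. Without the $g^{-m}$ weight, your proof does not reach the stated range.
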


\begin{rem}
Theorem \ref{cla} extends the classification result of \cite{Ou} in $\mathbb{R}^{N}$ to general convex cones $\mathcal{C}$, and removes the finite-energy assumption in \cite{CFR} in the typical case $H(\xi)=|\xi|$.
\end{rem}

\smallskip

We would like to mention some key difficulties and ingredients in the proof of our main results, comparing with the whole space case $\mathbb{R}^{N}$ and the non-anisotropic case $H(\xi)=|\xi|$.

\smallskip

First, the continuation methods such as the method of moving planes (c.f. e.g. \cite{CY,CK,CL,CW,DLL,DLS,DQ1,DFSV,DMMS,Lin,Sc,S,S2,T,Ve,WX}) no longer work for the anisotropic $p$-Laplacian equation and convex cone $\mathcal{C}$. We will use the vector field and integration method to prove our main Theorems, which relies on a series of integral estimates in convex cone.

\smallskip

Then, in order to deal with problems in cones, the second order regularity up to the boundary should be taken into account carefully. A primary difficulty arises from the degeneracy of the equation and the non-smoothness of $\partial\mathcal{C}$. To handle this, inspired by \cite{CFR} (see also \cite{CL1}, \cite{DGL} and \cite{DGHP}), we establish the second-order boundary regularity $\mathbf{a}(\nabla u)\in W^{1,2}_{\text{loc}}(\overline{\mathcal{C}})$ for weak solution $u$ via an approximation method in convex cones, see Proposition \ref{regular} in Section 3. Then, via a careful analysis of the boundary integral on $\partial\mathcal{C}$, by using both Proposition \ref{regular} and the convexity of $\mathcal{C}$, we extend Propositions 6.1 and 7.1 of \cite{SZ} (corresponding to $p \geq 2$ and $1 < p < 2$, respectively; see also Proposition 2.2 in \cite{CMR} and Proposition 3.1 in \cite{CHN}) to convex cones, see Proposition \ref{inte ineq} in Section 3. In fact, by replacing the original boundary integral term with an approximating solution sequence and utilizing the regularity estimates (3.13) and (3.14), along with the weak lower semi-continuity of the convex functional $J$, we derived the convergence of the approximating sequence and hence overcame the difficulties of non-convergence and sign uncertainty of the original boundary integral term. The integral inequality established in Proposition \ref{inte ineq} serves as a anisotropic counterpart to Propositions 6.1 and 7.1 in \cite{SZ} for $p\geq2$ and $1<p<2$ respectively, and also extends the integral identity (3.9) in $\mathbb{R}^N$ of \cite{CHN} to general convex cone $\mathcal{C}$. This integral inequality plays a crucial role in the proof of Liouville type theorems for subcritical anisotropic $p$-Laplacian equation in Section 4 and classification of positive solutions for the critical $p$-Laplacian equation in Section 5.

\smallskip

Next, thanks to Proposition \ref{inte ineq} and a series of integral estimates in convex cones, we complete the proof of Theorem \ref{Liou}, the Liouville type theorem for bounded nonnegative solutions in convex cones. Then, for the case $f(u)=u^q$ with $p-1<q<p^*-1$, by Theorem \ref{Liou}, together with a doubling and blow up method (see also \cite{PQS} for more details and applications), we establish a pointwise decay estimate of solutions in convex cones, see Lemma \ref{decay} in Section 4. Thanks to this, we remove the boundedness assumption of solutions in the case $f(u)=u^q$ with $0<q<p^*-1$, see the proof of Theorem \ref{liou2}.

\smallskip

Finally, Proposition \ref{inte ineq} allows us to extend Proposition 2.3 of \cite{Ou} to convex cones, see Proposition \ref{pro2.4} in Section 5. Based on Proposition \ref{inte ineq}, following the proof strategy of Theorem 1.1 in \cite{Ou}, we can deduce the classification result in Theorem \ref{cla} and hence remove the finite-energy assumption in \cite{CFR} in the typical case $H(\xi)=|\xi|$.

\medskip

The paper is organized as follows. Section 2 collects preliminary results, including the basic properties of the Finsler norm $H$, the comparison principles for anisotropic Finsler $p$-Laplacian, the lower decay estimate of solutions in convex cones and the weak Harnack inequality in convex cones with Neumann boundary condition.
Section 3 is devoted to proving the second-order regularity up to the boundary $\partial{\mathcal{C}}$ for weak solutions and the key integral inequality in convex cones. In section 4, we conclude the Liouville type theorems in convex cones. Section 5 is devoted to proving the classification result of positive solutions for the critical $p$-Laplacian equation in convex cone.

\medskip

In the following, $C$ denotes a general positive constant, depending on $N$, $\mathcal{C}$, $H$, and $u$, whose value may vary from line to line. Moreover, $B_r(x)$ denotes the open Euclidean ball in $\mathbb{R}^N$ of radius $r$ centered at $x$, and $B_r^{\widehat{H}_0}(x):=\{y\in\mathbb{R}^N\mid \widehat{H}_0(y-x)<r\}$ denotes the anisotropic open ball in $\mathbb{R}^N$ centered at $x$ with radius $r$.

\section{Preliminaries}

%We have the following lemmas.
%\begin{lem}\label{L infty}
%Let $u$ be a positive solution of \eqref{eq:1.1}, if $f\in C[0,+\infty)\cap C^1(0,+\infty)$, $f(t)>0,\,\,\forall t>0$, and assume that there exists $1<\alpha\leq p^*$ such that
%$$ f(u)\geq0,\,\,(\alpha-1)f(u)-uf^{\prime}(u)\geq0,\,\,\text{for}\,\,u\geq0.$$
%Then $u\in L^{\infty}_{loc}(\overline{\mathcal{C}})$.
%\end{lem}
%\begin{proof}
%  Similar to the proof of Lemma 2.1 in \cite{CFR}.
%\end{proof}

In this subsection, we assume the Finlser norm $H$ satisfies that $H^2$ is uniformly convex.
We will use the notation $H_0$ to denote the dual norm associated to $H$, defined by
\begin{equation}\label{def:2.1}
    H_0(x):=\sup\limits_{H(\xi)=1}\left\langle x,\xi\right\rangle, \qquad \forall \,\, x\in\mathbb{R}^N.
\end{equation}
In addition, we set
\begin{equation*}\label{def:2.2}
    \widehat{H}_0(x):=H_0(-x), \qquad \forall \,\, x\in\mathbb{R}^N.
\end{equation*}
Obviously $H_0$ and $\widehat{H}_0$ are positively homogeneous of degree $1$, i.e., $H_0(\lambda x)=\lambda H_0(x)$ and $\widehat{H}_0(\lambda x)=\lambda\widehat{H}_0(x)$ for any $\lambda>0$. One can easily verify that
\begin{equation}\label{norm}
  c_{H}|\xi|\leq H(\xi)\leq C_{H}|\xi|, \qquad \forall \,\, \xi\in\mathbb{R}^{N},
\end{equation}
where $C_{H}:=\max\limits_{\xi\in \mathbb{S}^{N-1}}H(\xi)\geq c_{H}:=\min\limits_{\xi\in \mathbb{S}^{N-1}}H(\xi)>0$. Moreover, it follows from \eqref{norm} that, for any $x\neq0$,
\begin{equation*}
        H_0(x)=\sup\limits_{\xi\neq 0}\frac{\left\langle x,\xi\right\rangle }{H(\xi)}\geq \frac{\left\langle x,x\right\rangle }{H(x)}\geq \frac{1}{C_{H}}|x|,
\end{equation*}
then we can deduce further that
\begin{equation}\label{norm0}
        \frac{1}{C_{H}}|x|\leq H_0(x), \, \widehat{H}_0(x)\leq |x|\sup\limits_{H(\xi)=1}|\xi|\leq \frac{1}{c_{H}}|x|, \qquad \forall \,\, x\in\mathbb{R}^{N}.
\end{equation}
Note that if $H\in C^2_{loc}(\mathbb{R}^N\setminus \{0\})$ is positively homogeneous of degree $1$, we have
\begin{equation}\label{homogeneity}
 \left\langle \nabla_\xi H(\xi),\xi\right\rangle=H(\xi)\qquad\text{and}\qquad\nabla^2_{\xi}H(\xi)\xi=0,\qquad\forall\,\,\xi\in\mathbb{R}^N.
\end{equation}

\medskip

Under the assumption of \eqref{uniformly convex}, one can easily prove that there exist positive constants $\lambda_1$, $\lambda_2$ such that
\begin{equation}\label{s}
  \sum\limits_{i,j=1}^{N}\left\lvert \frac{\partial {\bf{a}}_i}{\partial \xi_j}(\xi)\right\rvert \leq \lambda_1|\xi|^{p-2},\qquad\forall\,\,\xi\in\mathbb{R}^N\backslash\{0\},
\end{equation}
and
\begin{equation}\label{l}
  \sum\limits_{i,j=1}^{N} \frac{\partial {\bf{a}}_i}{\partial \xi_j}(\xi)\eta_i\eta_j \geq \lambda_2|\xi|^{p-2}|\eta|^2,\qquad\forall\,\,\xi\in\mathbb{R}^N\backslash\{0\},\,\,\eta\in\mathbb{R}^N,
\end{equation}
where the notation ${\bf{a}}_i$ denotes the $i$-th component of vector ${\bf{a}}$. We refer to \cite{CHN} for the proofs of \eqref{s} and \eqref{l}. By \eqref{s}, \eqref{l} and Lemma 2.1 of \cite{Da}, there exist positive constants $c_1$, $c_2$, depending on $H$ and $p$, such that for all  $\xi_1,\,\xi_2\in\mathbb{R}^N$ with $|\xi_1|+|\xi_2|>0$, there hold
\begin{equation}\label{vector inequalities}
 \begin{aligned}
  \left\langle {\bf{a}}(\xi_1)-{\bf{a}}(\xi_2),\xi_1-\xi_2\right\rangle&\geq c_1(|\xi_1|+|\xi_2|)^{p-2}|\xi_1-\xi_2|^2,\\
  \left\lvert {\bf{a}}(\xi_1)-{\bf{a}}(\xi_2)\right\rvert &\leq c_2(|\xi_1|+|\xi_2|)^{p-2}|\xi_1-\xi_2|,\\
  \left\lvert {\bf{a}}(\xi_1)-{\bf{a}}(\xi_2)\right\rvert &\leq c_2|\xi_1-\xi_2|^{p-1}\quad(1<p\leq2).
 \end{aligned}
\end{equation}
\medskip
We need the following basic properties of the gauge $H$, which containing the convexity of $H$, the smoothness of $H_0$ and important connections between $H$ and $H_0$.
\begin{lem}\cite[Lemma 2.2]{CL1}\label{8,le:2.2}
If $H\in C^2(\mathbb{R}^N\setminus\{0\})$ and the Hessian of $H^{N}$ is positive definite in $\mathbb{R}^N\setminus\{0\}$, then $H$ is convex and $H_{0}\in C^2(\mathbb{R}^N\setminus\{0\})$. Moreover, for $x,\xi\in\mathbb{R}^N\setminus\{0\}$,
\begin{equation*}
H\left(\nabla H_0(x)\right)=H_0(\nabla H(\xi))=1,
\end{equation*}
and
\begin{equation*}\label{eq:2.2}
x=H_0(x) \nabla H\left(\nabla H_0(x)\right), \quad \xi=H(\xi) \nabla H_0(\nabla H(\xi)),
\end{equation*}
where $H_0$ is the dual function of $H$ defined by \eqref{def:2.1}.
\end{lem}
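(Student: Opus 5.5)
The statement to prove is Lemma \ref{8,le:2.2}. The plan is to use classical convex duality between $H$ and $H_0$, organized around the unit balls $K=\{H\le 1\}$ and $K_0=\{H_0\le1\}$, which are polar to each other.

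\textbf{Convexity of $H$.} Since $H$ is positively $1$-homogeneous, its sublevel sets are dilates of $K$, and $H$ is convex exactly when $K$ is convex. Write $G=H^N$. Then $\nabla^2 G = N H^{N-2}\big(H\nabla^2 H+(N-1)\nabla H\otimes\nabla H\big)$. By \eqref{homogeneity}, $\nabla^2H\,\xi=0$. Decompose any vector as $\eta=t\xi+\zeta$ with $\langle\nabla H(\xi),\zeta\rangle=0$. Positive definiteness of $\nabla^2 G$ tested on such $\zeta$ then forces $\nabla^2H(\xi)$ to be positive definite on the tangent space $T_\xi\partial K$. Hence $\partial K$ is a $C^2$ hypersurface with positive second fundamental form, so $K$ is strictly convex, and $H=$ the gauge of $K$ is convex. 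Its Hessian is positive semidefinite with kernel exactly $\mathbb{R}\xi$.

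\textbf{Smoothness of $H_0$.} By \eqref{def:2.1}, $H_0$ is the support function of $K$. For $x\ne0$, the supremum is attained at a unique point $\xi(x)\in\partial K$ by strict convexity. This point is characterized by $\nabla H(\xi)=\mu x$ with $\mu>0$, i.e.\ the outer normal of $\partial K$ at $\xi(x)$ is $x/|x|$. The Gauss map $\partial K\to\mathbb{S}^{N-1}$, $\xi\mapsto\nabla H(\xi)/|\nabla H(\xi)|$, is $C^1$ and has invertible differential, because the second fundamental form is positive. By the inverse function theorem it is a $C^1$ diffeomorphism, so $x\mapsto\xi(x)$ is $C^1$ on $\mathbb{R}^N\setminus\{0\}$ and $0$-homogeneous. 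Danskin's envelope formula then gives $\nabla H_0(x)=\xi(x)$, which is $C^1$, so $H_0\in C^2$.

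\textbf{The identities.} First, $\nabla H_0(x)=\xi(x)\in\partial K$ gives $H(\nabla H_0(x))=1$. Next, $\nabla H(\xi(x))$ is parallel to $x$. Euler's identity gives $\langle\nabla H(\xi(x)),\xi(x)\rangle=H(\xi(x))=1$, while $\langle x,\xi(x)\rangle=H_0(x)$. Combining these yields $\nabla H(\nabla H_0(x))=x/H_0(x)$, which is the first representation formula.

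Symmetrically, for $\xi\ne0$ set $y=\nabla H(\xi)$. Then $\xi/H(\xi)\in\partial K$ is the maximizer for $y$, because the normal there is parallel to $y$ ($\nabla H$ is $0$-homogeneous). So $H_0(y)=\langle y,\xi\rangle/H(\xi)=1$, and $\nabla H_0(y)=\xi/H(\xi)$, which gives $\xi=H(\xi)\nabla H_0(\nabla H(\xi))$.

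\textbf{Main obstacle.} The delicate step is the first one: extracting positive curvature of $\partial K$ from positive definiteness of $\nabla^2(H^N)$, given the degenerate direction $\xi$. This curvature is what makes the Gauss map a diffeomorphism and upgrades $H_0$ from Lipschitz to $C^2$. I would handle it with the explicit Hessian formula and the orthogonal decomposition described above.
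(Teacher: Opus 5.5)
Your proposal is correct. The paper gives no proof of this lemma; it simply quotes it from \cite{CL1}, so your argument stands as a self-contained proof. It works through the geometry of the unit ball $K=\{H\le 1\}$. Positive definiteness of $\nabla^2 H$ on $\nabla H(\xi)^\perp$ gives $\partial K$ positive curvature. That makes the Gauss map a $C^1$ diffeomorphism onto $\mathbb{S}^{N-1}$, and Danskin's formula then identifies $\nabla H_0(x)$ with the unique maximizer $\xi(x)$. The identities follow from Euler's relation and the $0$-homogeneity of $\nabla H$.

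Two steps deserve one more line each.

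First, passing from ``positive second fundamental form'' to ``$K$ is convex'' is Hadamard's local-to-global theorem. You can bypass it, since you already show $\nabla^2H\ge 0$ on $\mathbb{R}^N\setminus\{0\}$. That gives convexity of $H$ on every segment avoiding the origin. Segments through the origin follow from homogeneity: along $t\mapsto H(tx)$ the slopes are $-H(-x)<H(x)$. Strict convexity then follows because a segment in $\partial K$ would give a tangent direction $\zeta\neq 0$ with $\langle\nabla^2H\zeta,\zeta\rangle=0$.

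Second, the inverse function theorem only makes the Gauss map a local diffeomorphism. Its global bijectivity comes from existence and uniqueness of the maximizer in the definition of $H_0$, which you do have.

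For comparison, a common route in the literature is Legendre duality. One has $\tfrac12H_0^2=(\tfrac12H^2)^*$, and $\Phi=\nabla(\tfrac12H^2)=H\nabla H$ has Jacobian $H\nabla^2H+\nabla H\otimes\nabla H$. This Jacobian is positive definite under your hypothesis, because positivity of $\nabla^2(H^q)$ is the same condition for every $q>1$. Hence $\Phi$ is a $C^1$ diffeomorphism of $\mathbb{R}^N\setminus\{0\}$ with inverse $\nabla(\tfrac12H_0^2)=H_0\nabla H_0$. All the identities then drop out of the Fenchel equality $H_0(\Phi(\xi))=H(\xi)$ together with homogeneity. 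That route is shorter and purely analytic. Yours is more geometric and makes visible that the curvature of the Wulff shape is what upgrades $H_0$ to $C^2$, at the cost of invoking the Gauss map and the envelope theorem.
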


We also need the following two useful comparison principles for the anisotropic Finsler $p$-Laplacian: one in domain $\Omega$ without boundary condition, and another in convex cones with Neumann boundary condition.
\begin{lem}\label{weak comparison1}
Assume that $u$ and $v$ are continuous functions in the Sobolev space $W^{1,p}_{loc}(\Omega)$ which satisfy the distribution inequality
 $$\Delta ^{H}_{p}u-\Delta ^{H}_{p}v \leq 0$$
in a domain $\Omega\subset\mathbb{R}^N$. Suppose that $u\geq v$ on $\partial\Omega$ in the sense that the set $\{u-v+\varepsilon\leq0\}$ has compact support in $\Omega$ for every $\varepsilon>0$. Then $u\geq v$ in $\Omega$.
\end{lem}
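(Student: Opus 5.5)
We argue by contradiction with the standard test-function method for monotone operators. Fix $\varepsilon>0$ and set $w:=(v-u-\varepsilon)^{+}$. By hypothesis the set $\{u-v+\varepsilon\leq0\}=\{v-u-\varepsilon\geq0\}$ is a compact subset $K_\varepsilon$ of $\Omega$. Hence $w$ is a nonnegative function in $W^{1,p}(\Omega)$ with compact support in $\Omega$. Since $u,v$ are continuous, they are bounded on $K_\varepsilon$, so $w\in L^\infty(\Omega)$. Its gradient is $\nabla w=(\nabla v-\nabla u)\chi_{\{v-u>\varepsilon\}}$ a.e. Consequently $w$ is an admissible test function in the distributional inequality $\Delta^H_pu-\Delta^H_pv\leq0$, after a routine density argument. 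That argument approximates $w$ by mollified nonnegative functions supported in a fixed compact neighbourhood of $K_\varepsilon$, using ${\bf a}(\nabla u),{\bf a}(\nabla v)\in L^{p'}_{loc}(\Omega)$ by the bound $|{\bf a}(\xi)|\leq C|\xi|^{p-1}$.

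Testing gives
\[
\int_\Omega\left\langle {\bf a}(\nabla u)-{\bf a}(\nabla v),\nabla w\right\rangle\mathrm{d}x\geq0,
\]
since $-\operatorname{div}({\bf a}(\nabla u))+\operatorname{div}({\bf a}(\nabla v))\geq0$ weakly and $w\geq0$. Inserting the formula for $\nabla w$, this becomes
\[
\int_{\{v-u>\varepsilon\}}\left\langle {\bf a}(\nabla v)-{\bf a}(\nabla u),\nabla v-\nabla u\right\rangle\mathrm{d}x\leq0 .
\]
By the first inequality in \eqref{vector inequalities}, the integrand is at least $c_1(|\nabla u|+|\nabla v|)^{p-2}|\nabla u-\nabla v|^2\geq0$, with the convention that it vanishes where both gradients vanish. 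Therefore the integrand vanishes a.e. on $\{v-u>\varepsilon\}$, which forces $\nabla u=\nabla v$ a.e. there. Thus $\nabla w=0$ a.e. in $\Omega$.

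A function in $W^{1,p}$ with compact support and zero gradient is constant on each connected component of $\Omega$. Since its support is compact in $\Omega$, that constant is $0$ near the boundary of each component, so $w\equiv0$. This gives $v\leq u+\varepsilon$ in $\Omega$. Letting $\varepsilon\to0$ yields $u\geq v$ in $\Omega$.

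The only delicate step is the admissibility of $w$ as a test function, since the inequality is a priori only tested against smooth compactly supported functions. This is handled by the compact support of $w$ and the $L^{p'}_{loc}$ integrability of ${\bf a}(\nabla u)$ and ${\bf a}(\nabla v)$. The monotonicity estimate from \eqref{vector inequalities} does the rest.
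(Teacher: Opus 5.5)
Your proof is correct and is essentially the argument the paper intends: the paper omits the proof and defers to Lemma 2.2 of \cite{SZ}, and its own proof of the Neumann version (Lemma \ref{weak comparison}) uses the same test function $(v-u-\varepsilon)^+$ together with the first inequality in \eqref{vector inequalities}. Your final step (zero gradient on the connected set $\Omega$ plus compact support forces $w\equiv0$) is, if anything, a slightly cleaner finish than the paper's appeal to boundary values on the components of $\{v>u+\varepsilon\}$.
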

\begin{proof}
Lemma \ref{weak comparison1} can be proved in similar way as Lemma 2.2 for the typical case $H(\xi)=|\xi|$ in \cite{SZ}, so we omit the details here.
\end{proof}

\begin{lem}\label{weak comparison}
Let $N\geq2$, $\mathcal{C}\subseteq\mathbb{R}^{N}$ be an open convex cone. Let $E\subset\mathbb{R}^{N}$ be a (bounded or unbounded) domain such that $\mathcal{C}\cap E$ is connected and $\mathcal{H}^{N-1}(\Gamma_0)>0$, where $\Gamma_0:=\mathcal{C}\cap \partial E$. Assume that $u$, $v\in W^{1,p}_{loc}(\overline{\mathcal{C}})$ satisfy the distribution inequality with Neumann boundary condition
 \begin{equation*}
     \left\{
         \begin{aligned}
             &\Delta ^{H}_{p}u-\Delta ^{H}_{p}v \leq 0\ &\rm{in}\,\, \mathcal{C}, \\
             &\langle {\bf{a}}(\nabla u),\nu\rangle=\langle {\bf{a}}(\nabla v),\nu\rangle=0 \quad\,\, &\rm{on}\,\, \partial\mathcal{C},
             \end{aligned}
             \right.
    \end{equation*}
in the sense of
$$\int_{\mathcal{C}}\left\langle {\bf{a}}(\nabla v)-{\bf{a}}(\nabla u),\nabla \psi\right\rangle \mathrm{d}x\leq0$$
for all $0\leq\psi\in W^{1,p}(\Omega)$ with $\Omega\subset\mathbb{R}^N$ bounded and $\psi=0$ on $\partial\Omega\cap \mathcal{C}$. Suppose that $u\geq v$ on $\Gamma_0$ in the sense that the set $\{u-v+\varepsilon\leq0\}\cap E$ has compact support in $\overline{C}\cap E$ for every $\varepsilon>0$. Then $u\geq v$ in $\mathcal{C}\cap E$.
\end{lem}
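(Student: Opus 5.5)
We argue by testing the inequality with a truncation of $(v-u)^+$, as in the classical weak comparison principle (Lemma 2.2 of \cite{SZ}). We adapt it to the Neumann setting, where the boundary $\partial\mathcal{C}$ contributes no constraint on the test function.

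Fix $\varepsilon>0$ and set
$$\psi_\varepsilon:=(v-u-\varepsilon)^+\chi_{E}.$$
By hypothesis, the set $\{u-v+\varepsilon\le 0\}\cap E$ is contained in a compact set $K_\varepsilon\subset\overline{\mathcal{C}}\cap E$. Hence $\psi_\varepsilon$ vanishes near $\Gamma_0=\mathcal{C}\cap\partial E$ and outside $K_\varepsilon$. Extending it by zero outside $E$ gives a function in $W^{1,p}(\Omega)$ for a bounded $\Omega\supset K_\varepsilon$ with $\psi_\varepsilon=0$ on $\partial\Omega\cap\mathcal{C}$.

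The function $\psi_\varepsilon$ need not vanish on $\partial\mathcal{C}$. This is exactly what the Neumann formulation allows: the admissible test functions are only required to vanish on $\partial\Omega\cap\mathcal{C}$. If boundedness is required (as in the weak-solution definition), we use instead $\min\{\psi_\varepsilon,M\}$ and let $M\to\infty$. This truncation step is the only technical point, and it is routine since $\nabla\min\{\psi_\varepsilon,M\}=\nabla\psi_\varepsilon\chi_{\{\psi_\varepsilon<M\}}$.

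Plugging $\psi_\varepsilon$ into the inequality gives
$$\int_{\mathcal{C}\cap E\cap\{v-u>\varepsilon\}}\langle {\bf a}(\nabla v)-{\bf a}(\nabla u),\nabla v-\nabla u\rangle\,\mathrm{d}x\le 0.$$
By the strict monotonicity in \eqref{vector inequalities}, the integrand is at least $c_1(|\nabla u|+|\nabla v|)^{p-2}|\nabla (v-u)|^2\ge0$, with equality only where $\nabla u=\nabla v$. Therefore $\nabla(v-u-\varepsilon)^+=0$ a.e. in $\mathcal{C}\cap E$.

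Since $\mathcal{C}\cap E$ is connected, $(v-u-\varepsilon)^+$ equals a constant $c_\varepsilon\ge0$ there. Suppose $c_\varepsilon>0$. Then $v-u-\varepsilon=c_\varepsilon>0$ on all of $\mathcal{C}\cap E$, so $\{u-v+\varepsilon\le0\}\cap E=\mathcal{C}\cap E$. By hypothesis this set would then have compact closure in $\overline{\mathcal{C}}\cap E$, which is impossible: $\Gamma_0\subset\partial E$ has positive $\mathcal{H}^{N-1}$ measure, so $\Gamma_0$ is nonempty and $\mathcal{C}\cap E$ accumulates at points of $\Gamma_0$ not lying in $E$. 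Alternatively, $\psi_\varepsilon$ has zero trace on $\Gamma_0$ while being a positive constant, again a contradiction. Hence $c_\varepsilon=0$, that is, $v\le u+\varepsilon$ in $\mathcal{C}\cap E$. Letting $\varepsilon\to0$ gives $u\ge v$.

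The main subtlety is justifying admissibility of $\psi_\varepsilon$ near the vertex set $\mathcal{V}$ and the non-smooth part of $\partial\mathcal{C}$. This is harmless because $u,v\in W^{1,p}_{loc}(\overline{\mathcal{C}})$, so $\psi_\varepsilon\in W^{1,p}(\mathcal{C}\cap\Omega)$, and the weak formulation requires no trace condition on $\partial\mathcal{C}$.
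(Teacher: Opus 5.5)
Your proof is correct and follows the same route as the paper: you test with $(v-u-\varepsilon)^+$, which needs no trace condition on $\partial\mathcal{C}$, and use the strict monotonicity in \eqref{vector inequalities} to get $\nabla u=\nabla v$ a.e.\ on $\{v>u+\varepsilon\}$. The only difference is the last step. The paper argues on connected components of $D_\varepsilon$ using $v=u+\varepsilon$ on $\partial D_\varepsilon$, which tacitly requires continuity of $u,v$. You instead use the connectedness of $\mathcal{C}\cap E$ to make $(v-u-\varepsilon)^+$ constant, then rule out a positive constant via the compact-support hypothesis and $\Gamma_0\cap E=\emptyset$. Your version works directly for Sobolev functions and actually uses the connectedness assumption in the statement.
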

\begin{proof}
This Lemma has been derived in Lemma 2.4 of \cite{CL} for bounded domain $E$. Here we will prove Lemma \ref{weak comparison} for general unbounded or bounded domain $E$, since we need to apply it to unbounded domains in our subsequent proof.

For any given $\varepsilon>0$, define
$$D_{\varepsilon}:=\left\{x\in\overline{C}\cap E|v(x)>u(x)+\varepsilon\right\}.$$
If $D_{\varepsilon}=\varnothing $, then the conclusion follows. If $D_{\varepsilon}\neq\varnothing $, let
$$\eta(x):=\max\{v(x)-u(x)-\varepsilon,0\}.$$
Since $D_{\varepsilon}$ is bounded, $\eta=0$ on $\partial D_{\varepsilon}\cap\mathcal{C}$ and $\eta\geq0$ in $\mathbb{R}^N$, choose $\eta$ as a test function, then it follows from  \eqref{vector inequalities} that
\begin{equation*}
\begin{aligned}
  c_1\int_{D_{\varepsilon}}(|\nabla u|+|\nabla v|)^{p-2}|\nabla v-\nabla u|^2
  &\leq\int_{D_{\varepsilon}}\left\langle {\bf{a}}(\nabla v)-{\bf{a}}(\nabla u),\nabla v-\nabla u\right\rangle \mathrm{d}x\\
  &=\int_{\mathcal{C}}\left\langle {\bf{a}}(\nabla v)-{\bf{a}}(\nabla u),\nabla \eta\right\rangle \mathrm{d}x=0.
  \end{aligned}
\end{equation*}
This gives that $\nabla u=\nabla v$ a.e. in $D_{\varepsilon}$, so that $v=u+C$ in every connected component of $D_{\varepsilon}$ for some constant $C$. Combining with $v=u+\varepsilon$ on $\partial D_{\varepsilon}$, we have $v=u+\varepsilon$ in $D_{\varepsilon}$. This completes the proof of Lemma \ref{weak comparison} by the arbitrariness of $\varepsilon>0$.
\end{proof}
As an application of Lemma \ref{weak comparison}, we have the following lower decay estimate of solution in convex cones.
\begin{lem}\label{lower decay}
Let $N\geq2$, $\mathcal{C}\subseteq\mathbb{R}^{N}$ be an open convex cone. Suppose that $u\in W^{1,p}_{loc}(\overline{\mathcal{C}})$ is a positive weak solution of the inequality with Neumann boundary condition
\begin{equation*}
      \left\{
          \begin{aligned}
          &-\operatorname{div}{\bf{a}}(\nabla u)\geq0 \quad\,\,\, &{\rm{in}} \,\, \mathcal{C}, \\
          &{\bf{a}}(\nabla u)\cdot \nu =0 \quad\,\,\, &{\rm{on}} \,\, \partial\mathcal{C}.
          \end{aligned}
          \right.
    \end{equation*}
Then for any $R>0$, there exists a constant $C=C(p,H,u,\mathcal{C},R)>0$ such that
\begin{equation}\label{lower estimate}
  u(x)\geq C \widehat{H}_0(x) ^{-\frac{N-p}{p-1}}
\end{equation}
for all $x\in\mathcal{C}\cap \overline{B^{\widehat{H}_0}_{2R}(0)}^c$.
\end{lem}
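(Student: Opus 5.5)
We compare $u$ from below with the fundamental solution of the anisotropic $p$-Laplacian, using the Neumann comparison principle Lemma \ref{weak comparison} on the exterior domain $E:=\overline{B^{\widehat{H}_0}_{2R}(0)}^c$.

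\emph{The barrier.} Set
\[
\Gamma(x):=\widehat{H}_0(x)^{-\frac{N-p}{p-1}},\qquad x\neq0 .
\]
By Lemma \ref{8,le:2.2} and homogeneity, $\nabla\Gamma$ is a negative multiple of $\nabla\widehat H_0(x)$. Hence
\[
{\bf a}(\nabla\Gamma)=c\,\widehat H_0(x)^{-(N-1)}\,\frac{x}{\widehat H_0(x)}
\]
for some constant $c<0$. This is, up to sign conventions with $\widehat H_0$ versus $H_0$, the standard computation showing that $\Gamma$ is $\Delta^H_p$-harmonic away from the origin. The identity $x=H_0(x)\nabla H(\nabla H_0(x))$ is what makes ${\bf a}(\nabla\Gamma)$ radial in the Euclidean sense.

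Consequently, on $\partial\mathcal C$ we have ${\bf a}(\nabla\Gamma)\cdot\nu=c\,\widehat H_0^{-N}\,x\cdot\nu=0$. Indeed, since $\mathcal C$ is a cone, the vector $x$ is tangent to $\partial\mathcal C$ (at regular boundary points, with $\nu$ defined $\mathcal H^{N-1}$-a.e.). If the cone has its vertex set off the origin, I first translate so that $0\in\mathcal V$; otherwise I use $x-x_0$ with $x_0\in\mathcal V$. Thus $v:=m\Gamma$ satisfies the Neumann problem weakly in $\mathcal C\cap E$, with equality $\Delta_p^H v=0$. Testing against $\psi$ vanishing on $\partial\Omega\cap\mathcal C$ and integrating by parts gives only the boundary term on $\partial\mathcal C$, which vanishes.

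\emph{Choice of constant and comparison.} Since $u$ is a positive supersolution, the weak Harnack inequality (or lower semicontinuity plus the strong minimum principle) gives $\inf u>0$ on the compact set $\overline{\mathcal C}\cap\partial B^{\widehat H_0}_{2R}(0)$. I would use the weak Harnack inequality in cones from Section 2, applied on a slightly larger annulus. Choose
\[
m:=(2R)^{\frac{N-p}{p-1}}\inf_{\mathcal C\cap\partial B_{2R}^{\widehat H_0}}u ,
\]
so that $v\le u$ on $\Gamma_0$. Since $v\to0$ at infinity, for every $\varepsilon>0$ the set $\{u-v+\varepsilon\le0\}\cap E$ lies where $v\ge\varepsilon$. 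That set is bounded, and it stays away from $\Gamma_0$ by continuity, so it is compactly supported in $\overline{\mathcal C}\cap E$. Moreover, $\mathcal C\cap E$ is connected: it is a cone minus a bounded convex set, and this is where convexity is used. Lemma \ref{weak comparison} then yields $u\ge m\Gamma$ in $\mathcal C\cap E$, which is \eqref{lower estimate}.

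\emph{Main obstacle.} I expect the delicate points to be the strict positivity of $\inf u$ on $\Gamma_0$, which requires the weak Harnack inequality up to the Neumann boundary including the vertex, and the rigorous verification of the Neumann condition for $\Gamma$ near the singular set $\mathcal V$. For the latter, I would handle it by using that ${\bf a}(\nabla\Gamma)$ is parallel to $x$, hence tangent to every face, together with $\mathcal H^{N-1}$-negligibility of the non-smooth part of $\partial\mathcal C$.
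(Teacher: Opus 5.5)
Your proposal is correct and follows the paper's proof essentially step by step. The paper also compares $u$ with $v=K\widehat H_0(x)^{-\frac{N-p}{p-1}}$ and applies Lemma \ref{weak comparison} on $E=\overline{B^{\widehat H_0}_{2R}(0)}^c$, using that $v\to0$ at infinity; it takes $K=R^{\frac{N-p}{p-1}}\min_{\mathcal C\cap\partial B^{\widehat H_0}_{2R}(0)}u$, which gives strict inequality on $\Gamma_0$, whereas your factor $(2R)^{\frac{N-p}{p-1}}$ gives equality there, which is equally fine. The points you flag are details the paper leaves implicit or delegates to Lemma \ref{8,le:2.2}: positivity of the minimum via Lemma \ref{Hanack}, connectedness of $\mathcal C\cap E$, and the Neumann condition for the barrier because ${\bf a}(\nabla\Gamma)$ is parallel to $x$. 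Your treatment of each is sound.
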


\begin{proof}
Let $R>0$, define
$$K:=R^{\frac{N-p}{p-1}}\min\limits_{x\in\mathcal{C}\cap \partial B^{\widehat{H}_0}_{2R}(0)}u(x)>0,$$
and
$$v(x)=K\widehat{H}_0(x)^{-\frac{N-p}{p-1}}.$$
Then by Lemma \ref{8,le:2.2}, $v$ is a fundamental solution of
\begin{equation*}
      \left\{
          \begin{aligned}
          &-\operatorname{div}{\bf{a}}(\nabla v)=0 \quad\,\,\, &{\rm{in}} \,\, \mathcal{C}, \\
          &{\bf{a}}(\nabla v)\cdot \nu =0 \quad\,\,\, &{\rm{on}} \,\, \partial\mathcal{C}.
          \end{aligned}
          \right.
    \end{equation*}
Since $u$ is positive and $\lim\limits_{x\in\mathcal{C},|x|\rightarrow\infty}v(x)=0$, the set $\{x\in\mathcal{C}|v(x)-u(x)-\varepsilon\geq 0\}$ is bounded, then by applying Lemma \ref{weak comparison} in the domain $E=\overline{B^{\widehat{H}_0}_{2R}(0)}^c$, we obtain the validity of \eqref{lower estimate}. This concludes the proof of Lemma \ref{lower decay}.
\end{proof}

The following integration by parts formula in convex cones will be essential throughout our subsequent proofs.
\begin{lem}\label{int part}
 Let $N\geq2$, $\mathcal{C}\subseteq\mathbb{R}^{N}$ be an open convex cone. Suppose that $a,b\in W^{1,2}_{loc}(\mathcal{C})$, where $a$ is scalar function and $b$ is vector field. Let $\phi\in C^{1}_c(\mathbb{R}^N)$ be any test function. Then
 $$\int_{\mathcal{C}}a(x)b(x)\cdot\nabla\phi(x)\mathrm{d}x=-\int_{\mathcal{C}}(\nabla a(x)\cdot b(x)+a(x)\operatorname{div}b(x))\phi\mathrm{d}x+\int_{\partial\mathcal{C}}a(x)\phi(x)b(x)\cdot\nu\mathrm{d}\mathcal{H}^{N-1},$$
 where $\nu$ is the unit outwards normal vector of $\partial\mathcal{C}$ at $x$.
\end{lem}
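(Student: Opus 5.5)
The plan is to reduce the statement to the classical divergence theorem on a bounded Lipschitz domain, and then to remove the smoothness assumptions on $a$ and $b$ by a density argument. Since $\phi\in C^1_c(\mathbb{R}^N)$, I fix a ball $B_R$ containing $\operatorname{supp}\phi$. The set $\mathcal{C}\cap B_R$ is a bounded convex open set, hence a bounded Lipschitz domain, and its boundary consists of $\partial\mathcal{C}\cap B_R$ together with $\mathcal{C}\cap\partial B_R$. The vector field $\Phi:=a\phi\, b$ vanishes near $\partial B_R$. The identity is then exactly $\int_{\mathcal{C}\cap B_R}\operatorname{div}\Phi\,\mathrm{d}x=\int_{\partial\mathcal{C}}\Phi\cdot\nu\,\mathrm{d}\mathcal{H}^{N-1}$, once one expands $\operatorname{div}\Phi=\phi\,\nabla a\cdot b+a\phi\operatorname{div}b+a\,b\cdot\nabla\phi$ and rearranges. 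The outward normal $\nu$ exists $\mathcal{H}^{N-1}$-a.e. on $\partial\mathcal{C}$, because a convex cone has Lipschitz boundary; the vertex set has measure zero when $k\le N-2$, and it is the whole flat boundary when $k=N-1$.

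The steps are as follows. First, I prove the formula for $a\in C^1(\overline{\mathcal{C}\cap B_R})$ and $b\in C^1(\overline{\mathcal{C}\cap B_R};\mathbb{R}^N)$ by the Gauss--Green theorem for Lipschitz domains. Second, I approximate general $a,b$ by such smooth functions. Here the hypothesis must be read as $a,b\in W^{1,2}_{loc}(\overline{\mathcal{C}})$, that is, $W^{1,2}$ up to the boundary on bounded pieces; this is how the lemma is used later, with $b$ built from ${\bf a}(\nabla u)\in W^{1,2}_{loc}(\overline{\mathcal{C}})$ by Proposition \ref{regular}. Because $\mathcal{C}\cap B_{2R}$ is Lipschitz (in fact star-shaped with respect to an interior point), there is a standard extension and mollification procedure: dilate slightly towards an interior point, then mollify. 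It gives $a_j\to a$ and $b_j\to b$ in $W^{1,2}(\mathcal{C}\cap B_R)$, with $a_j,b_j$ smooth up to the boundary.

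Third, I pass to the limit term by term. The interior terms $\int \phi\,\nabla a_j\cdot b_j$, $\int a_j\phi\operatorname{div}b_j$ and $\int a_j b_j\cdot\nabla\phi$ converge because they are products of two $L^2$-convergent factors times a bounded $\phi$ or $\nabla\phi$. For the boundary term, I use the trace theorem $W^{1,2}(\mathcal{C}\cap B_R)\to L^2(\partial(\mathcal{C}\cap B_R))$ on Lipschitz domains. It gives $a_j\to a$ and $b_j\to b$ in $L^2(\partial\mathcal{C}\cap B_R)$, so $a_jb_j\cdot\nu\,\phi\to ab\cdot\nu\,\phi$ in $L^1(\partial\mathcal{C})$, and the boundary integral is understood in the trace sense.

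The main obstacle is the regularity framework rather than the identity itself. If $a,b$ were only in $W^{1,2}_{loc}(\mathcal{C})$ (interior), the traces and the boundary integral would not be defined, so I would make the hypothesis ``up to the boundary'' explicit. I would also need to check that the interior integrals are finite: $\nabla a\cdot b$ and $a\operatorname{div}b$ are only $L^1_{loc}$ as products of $L^2$ functions, which is enough against the bounded $\phi$. A second delicate point is the Lipschitz character of $\mathcal{C}\cap B_R$ near the vertex set, where the boundary is non-smooth. For this I would simply invoke convexity: every bounded convex open set is a Lipschitz domain, so the trace and Gauss--Green theorems apply without further work.
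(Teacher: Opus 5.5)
The paper states this lemma without proof and uses it as a standard consequence of the divergence theorem, so there is no route of the paper's to compare against. Your argument is a correct proof. Your three steps are:
\begin{itemize}
\item reduce to $\Omega=\mathcal{C}\cap B_R$, which is bounded and convex, hence Lipschitz;
\item approximate in $W^{1,2}(\Omega)$ by $C^\infty(\overline{\Omega})$ functions (dilate-then-mollify works because, for convex $\Omega$ and interior $x_0$, one has $\overline{\Omega}\subset x_0+\lambda(\Omega-x_0)$ for every $\lambda>1$);
\item pass to the limit using continuity of the trace $W^{1,2}(\Omega)\to L^2(\partial\Omega)$.
\end{itemize}
Equivalently, you are proving the Gauss--Green formula for the $W^{1,1}$ field $a\phi b$ on a bounded Lipschitz domain.

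What your version adds is precision. As literally stated, with $a,b\in W^{1,2}_{loc}(\mathcal{C})$, neither the boundary integral nor the interior integrals near $\partial\mathcal{C}$ need make sense. The hypothesis therefore has to be $W^{1,2}_{loc}(\overline{\mathcal{C}})$, with the boundary term read in the trace sense. This is exactly how the paper invokes the lemma, for instance for $I_0$ in the proof of Proposition \ref{inte ineq}, where $\mathbf{v},u^b\operatorname{div}\mathbf{v}\in W^{1,2}_{loc}(\overline{\mathcal{C}})$ comes from Proposition \ref{regular}. Because your argument uses only that the domain is bounded and convex, it also covers the paper's application of the lemma on the truncated approximating cones $\mathcal{C}_{k,R}$.

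One minor inaccuracy: the $\mathcal{H}^{N-1}$-a.e. existence of $\nu$ has nothing to do with the size of the vertex set $\mathcal{V}$. The non-smooth part of $\partial\mathcal{C}$ can be strictly larger than $\mathcal{V}$; for example, the edges $\{x_1=x_2=0\}$ of $\mathbb{R}^N_{2^{-m}}$ with $m\geq3$ are not in $\mathcal{V}$. The a.e. existence follows from Rademacher's theorem applied to the local Lipschitz graphs describing $\partial\mathcal{C}$, which is the justification you also give.
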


Next, we need the following weak Hanack inequality for anisotropic Finsler $p$-Laplacian in convex cones with Neumann boundary condition.
\begin{lem}\label{Hanack}
Let $N\geq2$, $\mathcal{C}\subseteq\mathbb{R}^{N}$ be an open convex cone. Assume that $u\in W^{1,p}_{loc}(\overline{\mathcal{C}})$ is a nonnegative solution to the following inequality
  \begin{equation*}
      \left\{
          \begin{aligned}
          &-\operatorname{div}{\bf{a}}(\nabla u)\geq0 \quad\,\,\, &{\rm{in}} \,\, \mathcal{C}, \\
          &{\bf{a}}(\nabla u)\cdot \nu =0 \quad\,\,\, &{\rm{on}} \,\, \partial\mathcal{C}.
          \end{aligned}
          \right.
    \end{equation*}
Then for any $\gamma\in(0,p_*-1)$ and $R>0$, there exists a constant $C=C(N,p,\gamma,\mathcal{C})>0$ such that
\begin{equation*}
  \min\limits_{x\in B_R\cap\mathcal{C}}u(x)\geq CR^{-\frac{p}{\gamma}}||u||_{L^{\gamma}(B_{2R}\cap\mathcal{C})}.
\end{equation*}
\end{lem}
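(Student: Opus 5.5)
The plan is to run the standard Moser iteration for supersolutions (Trudinger's weak Harnack argument) in the form adapted to the Neumann problem on the cone. The key structural facts are two. First, for a convex cone $\mathcal{C}$ and any $x_0\in\overline{\mathcal{C}}$, the sets $B_r(x_0)\cap\mathcal{C}$ are convex. Their measure is comparable to $r^N$ with constants depending only on $N$ and $\mathcal{C}$ (by homogeneity of the cone and convexity). Second, the Neumann condition lets us use test functions that do \emph{not} vanish on $\partial\mathcal{C}$, only on $\partial\Omega\cap\mathcal{C}$. Hence every computation is the interior computation with $\mathcal{C}$-relative balls, and no boundary terms appear. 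Here $p_*$ is understood as the usual threshold $\frac{N(p-1)}{N-p}+1$, so that $\gamma<p_*-1=\frac{N(p-1)}{N-p}$.

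First I would replace $u$ by $u_\varepsilon=u+\varepsilon$, which is still a supersolution with zero conormal derivative, and let $\varepsilon\to0$ at the end. For $\beta\neq 0$, $\beta\neq 1-p$, I would test with $\varphi=\eta^p u_\varepsilon^{\beta}$, where $\eta\in C^1_c(B_{2R})$ is a cutoff; this is admissible since $\varphi$ need not vanish on $\partial\mathcal{C}$. Using $\langle\mathbf{a}(\xi),\xi\rangle=H^p(\xi)\ge c_H^p|\xi|^p$ and $|\mathbf{a}(\xi)|\le C|\xi|^{p-1}$ (from \eqref{norm} and homogeneity), I get the Caccioppoli-type inequality
$$\int_{\mathcal{C}}\eta^p|\nabla (u_\varepsilon^{(\beta+p-1)/p})|^p\le C(\beta)\int_{\mathcal{C}}|\nabla\eta|^p u_\varepsilon^{\beta+p-1}.$$
Two ranges matter. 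For $\beta<0$, the sign works directly from supersolution testing. For $\beta=1-p$, one gets a bound for $\int\eta^p|\nabla\log u_\varepsilon|^p\le CR^{N-p}$.

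Next I need a Sobolev inequality and a Poincaré inequality on $B_r\cap\mathcal{C}$ for functions not vanishing on $\partial\mathcal{C}$. I would take these from the convexity of $B_r\cap\mathcal{C}$, which is a Lipschitz domain with uniform John and extension constants under dilation. Concretely, I would use the Poincaré inequality on convex sets with constant $\sim\operatorname{diam}$, and then the scaled Sobolev–Poincaré inequality
$$\Big(\fint |w-\bar w|^{p^*}\Big)^{1/p^*}\le C r\Big(\fint|\nabla w|^p\Big)^{1/p}.$$
With these, I would proceed in three steps.

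(a) Iterating with negative exponents gives $\min_{B_R\cap\mathcal{C}}u_\varepsilon\ge C\big(\fint_{B_{3R/2}\cap\mathcal{C}}u_\varepsilon^{-s}\big)^{-1/s}$ for small $s>0$.

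(b) Iterating with exponents $\beta\in(1-p,0)$ gives $\big(\fint_{B_{3R/2}}u_\varepsilon^{\gamma}\big)^{1/\gamma}\le C\big(\fint_{B_{7R/4}}u_\varepsilon^{s}\big)^{1/s}$. The admissible range of $\gamma$ here is exactly $\gamma<\frac{N(p-1)}{N-p}$, since the gain factor $N/(N-p)$ is applied finitely many times starting below $p-1$.

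(c) The logarithmic estimate plus Poincaré shows $\log u_\varepsilon\in BMO$ on the relative balls. The John–Nirenberg inequality, which holds on spaces of homogeneous type such as $\overline{\mathcal{C}}$ with Lebesgue measure since relative balls are doubling, then yields $\fint u_\varepsilon^{s}\fint u_\varepsilon^{-s}\le C$, which links (a) and (b).

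Combining (a)–(c) and converting averages to norms using $|B_{2R}\cap\mathcal{C}|\sim R^N$ gives $\min_{B_R\cap\mathcal{C}}u\ge CR^{-N/\gamma}\|u\|_{L^\gamma(B_{2R}\cap\mathcal{C})}$. Note this gives the exponent $N/\gamma$ rather than the stated $p/\gamma$. I would check which normalization is intended; scaling $u(x)\mapsto u(Rx)$ preserves the equation and forces $N/\gamma$, so the stated constant is presumably a typo or meant with $R$ fixed. For balls not centered on the vertex set, everything is done with centers in $\overline{\mathcal{C}}$ as in the argument above.

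The main obstacle is step (c) together with the uniform Sobolev–Poincaré inequality on the relative balls $B_r(x_0)\cap\mathcal{C}$, uniformly in $x_0\in\overline{\mathcal{C}}$ and $r$. I would try first to get it from convexity alone, using the Poincaré inequality for convex domains with constant depending on $\operatorname{diam}^N/|{\rm domain}|$, which is bounded via the measure comparability above. John–Nirenberg on the doubling metric measure space $(\overline{\mathcal{C}},|\cdot|,dx)$ then follows from the standard Calderón–Zygmund argument.
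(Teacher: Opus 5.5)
Your proposal is correct. It follows the same overall scheme as the paper: Trudinger's Moser-iteration proof of the weak Harnack inequality, where the Neumann condition lets the test functions $\eta^p u_\varepsilon^{\beta}$ vanish only on $\partial B\cap\mathcal{C}$. The difference is in how you supply the functional inequalities.

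\textbf{The paper's route.} It keeps the argument of \cite{T2} essentially verbatim and changes one ingredient. Trudinger's Sobolev embedding for $W^{1,p}_0$ is replaced by a Sobolev inequality for functions vanishing on the back radial contact set $\Gamma^+_0\subset\partial E\cap\mathcal{C}$ of $\mathcal{C}\cap E$, taken from the radial Poincar\'e inequality of \cite[Lemma A.3]{DGL}. The John--Nirenberg crossover and the uniformity of the constants in the center and radius of the balls are left implicit.

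\textbf{Your route.} You work in the doubling-plus-Poincar\'e framework.
\begin{itemize}
\item You use mean-subtracted Sobolev--Poincar\'e inequalities on the convex sets $B_r(x_0)\cap\mathcal{C}$, $x_0\in\overline{\mathcal{C}}$, with constants controlled by $\operatorname{diam}^N/|B_r(x_0)\cap\mathcal{C}|$.
\item You use John--Nirenberg on the doubling space $(\overline{\mathcal{C}},|\cdot|,dx)$.
\end{itemize}
This costs you a lower-order term $r^{-1}\|\eta w\|_{L^p}$ in the Sobolev step, which is harmless in the iteration, and an appeal to John--Nirenberg on a space of homogeneous type. In exchange it makes explicit exactly the points the paper passes over.

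The uniform measure bound you rely on deserves its one-line proof. For $x_0\in\overline{\mathcal{C}}$ one has $x_0+\mathcal{C}\subset\mathcal{C}$, hence $|B_r(x_0)\cap\mathcal{C}|\geq |B_1\cap\mathcal{C}|\,r^N$.

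There is one bookkeeping point. To end with $\|u\|_{L^\gamma(B_{2R}\cap\mathcal{C})}$ rather than a norm on $B_{3R/2}$, run steps (a), (b), (c) on balls with radii between $2R$ and, say, $3R$. This costs nothing, since $u$ is a supersolution on the whole cone.

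Your remark on the exponent is right. The constant $u\equiv 1$ is an admissible supersolution, and it violates the stated inequality with $R^{-p/\gamma}$ as $R\to\infty$. The correct factor is $R^{-N/\gamma}$, which is also what the paper actually uses when it applies the lemma in \eqref{eq4.10} and \eqref{eq4.11}. Your reading $p_*-1=\frac{N(p-1)}{N-p}$ also matches the paper's convention $p_*=\frac{p(N-1)}{N-p}$.
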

\begin{proof}
Firstly, due to the boundary condition $\left\langle{\bf{a}}(\nabla u),\nu\right\rangle=0$ on $\partial\mathcal{C}$, by restricting the integration region to the convex cone $\mathcal{C}$, the formulae (1.16) to (1.22) in \cite{T2} are still valid in convex cone $\mathcal{C}$, up to slightly corresponding modifications.
Next, one should note that, in order to deal with the boundary condition $\left\langle{\bf{a}}(\nabla u),\nu\right\rangle=0$ on $\partial\mathcal{C}$, in the proof of Harnack inequality in \cite{T2}, the Sobolev embedding $W^{1,p}_{0}(\Omega)\hookrightarrow L^{\frac{Np}{N-p}}(\Omega)$ with $1\leq p<N$ in \cite[Lemma 1.1]{T2}
should be replaced by the radial Poincar\'{e} type inequality in Lemma A.3 of \cite{DGL}. Indeed, the radial Poincar\'{e} type inequality in  Lemma A.3 of \cite{DGL} implies that, for any $f\in W^{1,p}(\Omega)$ with $f=0$ on the (back) radial contact set $\Gamma^{+}_{P}\subset\partial\Omega$,
\begin{equation*}
  \|f\|_{L^{\frac{Np}{N-p}}(\Omega)}\leq S(\Omega)\left(\|\nabla f\|_{L^{p}(\Omega)}+\|f\|_{L^{p}(\Omega)}\right)\leq S(\Omega)\left(1+d_{r,P}(\Omega)\right)\|\nabla f\|_{L^{p}(\Omega)},
\end{equation*}
where $S(\Omega)$ is the Sobolev constant of the embedding $W^{1,p}(\Omega)\hookrightarrow L^{\frac{Np}{N-p}}(\Omega)$, and $d_{r,P}(\Omega)$ defined by (A.1) in \cite{DGL} denotes the radial width of $\Omega$ with respect to the radial center $P$. In particular, if $\Omega=\mathcal{C}\cap E$ with $E\subset\mathbb{R}^{N}$ be a bounded domain, then $P=0$ and $\Gamma^{+}_{0}\subset\partial E\cap\mathcal{C}$.
Therefore, the formula (1.23) in \cite{T2} is still valid in our setting. Then, by repeating the remaining process of the proof in \cite[Theorem 1.2]{T2}, we conclude the validity of Lemma \ref{Hanack}.
\end{proof}

The following existence result of positive radial solution for $p$-Laplacian equation in ball is useful to construct solutions to the anisotropic Finsler $p$-Laplacian equation in anisotropic ball.
\begin{lem}\label{lem2.8}\cite[Lemma 2.6]{SZ}
Let $N>1$, $m>1$. There exists $R_m>0$ such that the equation
\begin{equation*}\label{rs}
  \Delta_mv+v^{m-1}=0
\end{equation*}
has a positive radial solution $v_m(|x|)$ in the ball $|x|<R_m$, with $v_m=0$ on $|x|=R_m$ and $v_m(0)=1$.
\end{lem}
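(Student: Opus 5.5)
The plan is a shooting argument for the radial ODE, following the classical approach (this is \cite[Lemma 2.6]{SZ}). For radial $v=v(r)$, the equation $\Delta_m v+v^{m-1}=0$ is equivalent to
\begin{equation*}
\left(r^{N-1}|v'|^{m-2}v'\right)'+r^{N-1}|v|^{m-2}v=0,\qquad v(0)=1,\quad v'(0)=0.
\end{equation*}
I will use $|v|^{m-2}v$ in place of $v^{m-1}$ so that the problem makes sense after $v$ changes sign.

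\textbf{Step 1 (local existence).} Integrate once to obtain the fixed-point formulation
\begin{equation*}
v(r)=1-\int_0^r\Phi^{-1}\Big(s^{1-N}\int_0^s t^{N-1}|v(t)|^{m-2}v(t)\,dt\Big)ds,
\end{equation*}
where $\Phi(\tau)=|\tau|^{m-2}\tau$. Near $r=0$ we have $v\approx1$, so the inner integral is about $s/N>0$. The map is therefore a contraction, or at least compact with Schauder applicable, on $C[0,\delta]$ for small $\delta$. This gives a $C^1$ solution that is decreasing near $0$, with $v'(r)\sim -(r/N)^{1/(m-1)}$.

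\textbf{Step 2 (continuation while positive).} As long as $v>0$, the quantity $r^{N-1}\Phi(v')$ is decreasing and negative, so $v$ is strictly decreasing and bounded between $0$ and $1$. The energy
\begin{equation*}
E(r)=\tfrac{m-1}{m}|v'|^m+\tfrac1m|v|^m
\end{equation*}
satisfies $E'=-\frac{N-1}{r}|v'|^m\le0$. Hence $v'$ stays bounded, and the solution extends as long as $v>0$.

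\textbf{Step 3 (a zero occurs; the main obstacle).} I must show that $v$ cannot stay positive for all $r$. Suppose it did. Since $v$ is decreasing, $v\ge \ell\ge0$. From
\begin{equation*}
r^{N-1}\Phi(-v'(r))=\int_0^r t^{N-1}v^{m-1}\,dt\ge \ell^{m-1}r^N/N
\end{equation*}
we get $-v'\ge c\,r^{1/(m-1)}$ if $\ell>0$, which forces $v\to-\infty$, a contradiction. So $\ell=0$, and a Sturm/oscillation comparison is needed.

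Because the equation is homogeneous of degree $m-1$, I first try a Prüfer-type angle via the generalized sine $\sin_m$. In polar-type coordinates the angle satisfies $\theta'\ge 1-\frac{C(N-1)}{r}$, which is positive for large $r$, so $\theta\to\infty$ and $v$ must vanish. As an alternative, I will compare with the Dirichlet eigenfunction of $\Delta_m$ on a large ball $B_\rho$ after rescaling. With first eigenvalue $\lambda_1(B_\rho)=\lambda_1(B_1)\rho^{-m}$ and potential $v^{m-1}/v^{m-1}\equiv1>\lambda_1(B_\rho)$ for large $\rho$, the Picone identity for the $m$-Laplacian shows that a positive supersolution cannot exist on $B_\rho$. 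Either way, there is a first zero $R_m$.

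\textbf{Conclusion.} Take $R_m$ to be the first zero. Then $v_m=v$ on $[0,R_m]$ is positive in $|x|<R_m$, vanishes on $|x|=R_m$, and satisfies $v_m(0)=1$. Regularity at the origin, which makes it a weak solution in the ball, follows from $v'(r)=O(r^{1/(m-1)})$.
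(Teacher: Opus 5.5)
Your proof is correct. The paper gives no argument for this lemma: it is quoted from \cite[Lemma 2.6]{SZ} and used as a black box in Lemma~\ref{lem2.10}. So your shooting argument is a self-contained proof rather than a variant of one in the text, and the two steps you only sketched do check out.

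In Step 1 with $m>2$, $\Phi^{-1}$ is only $\frac{1}{m-1}$-H\"older at $0$. The contraction therefore genuinely relies on your lower bound $s^{1-N}\int_0^s t^{N-1}v^{m-1}\,dt\ge cs$ for $v$ near $1$. With that bound, the mean value theorem shows the integrands for $v_1,v_2$ differ by $O(s^{\frac{1}{m-1}})\|v_1-v_2\|_\infty$, which gives a contraction on a small ball around $1$ in $C[0,\delta]$.

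In Step 3, normalize $\sin_m$ by $(\Phi((\sin_m)'))'+(m-1)\Phi(\sin_m)=0$ and set $\cos_m:=(\sin_m)'$, so that $|\sin_m|^m+|\cos_m|^m=1$. Write $v=\rho\sin_m\theta$ and $\Phi(v')=\kappa^{m-1}\rho^{m-1}\Phi(\cos_m\theta)$ with $\kappa=(m-1)^{-1/m}$. This is a $C^1$ change of variables in the $(v,\Phi(v'))$-plane with Jacobian $-(m-1)\kappa^{m-1}\rho^{m-1}\neq 0$. One gets exactly $\theta'=\kappa+\frac{N-1}{(m-1)r}\Phi(\cos_m\theta)\sin_m\theta\ge\kappa-\frac{N-1}{(m-1)r}$. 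Hence $\theta\to\infty$, which is incompatible with $\sin_m\theta>0$ for all $r$.

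In the Picone variant, $\phi_1^m/v^{m-1}$ is an admissible test function because, under the contradiction hypothesis, $v\ge v(\rho)>0$ on $\overline{B_\rho}$. Neither mechanism needs your preliminary reduction to $\ell=0$.

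The Picone variant also shows what $v_m$ is: a rescaled first Dirichlet eigenfunction. This gives a shorter alternative proof. Take the positive first eigenfunction $\phi_1$ of $-\Delta_m$ on $B_1$; it is radial by simplicity of $\lambda_1(B_1)$ and rotation invariance. Then set $v_m(x)=\phi_1(x/R_m)/\phi_1(0)$ with $R_m=\lambda_1(B_1)^{1/m}$, so that $-\Delta_m v_m=R_m^{-m}\lambda_1(B_1)v_m^{m-1}=v_m^{m-1}$.

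That route trades your ODE analysis for the eigenvalue theory of the $m$-Laplacian: existence, positivity and simplicity of $\phi_1$. Your route is elementary and gives radial symmetry and the monotonicity $0<v_m\le v_m(0)=1$ directly from the radial equation.
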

As an application of Lemmas \ref{lem2.8} and \ref{weak comparison1}, we have the following lower decay estimate of solutions in convex cones at infinity.
\begin{lem}\label{lem2.9}
 Let $N\geq2$, $\mathcal{C}\subseteq\mathbb{R}^{N}$ be an open convex cone. Let $g(s)>0$ for all $s>0$ with $\inf\limits_{s>s_0}g(s)>0$ for any $s_0>0$. Suppose that $u\in W^{1,p}_{loc}(\overline{\mathcal{C}})$ is a nonnegative solution to the following inequality
 \begin{equation}\label{eq2.6}
  \left\{
        \begin{aligned}
        &-\operatorname{div}\left({\bf{a}}(\nabla u)\right)\geq g(u) \quad\,\,\, &{\rm{in}} \,\, \mathcal{C}, \\
        &{\bf{a}}(\nabla u)\cdot \nu =0 \quad\,\,\, &{\rm{on}} \,\, \partial\mathcal{C}.
        \end{aligned}
        \right.
 \end{equation}
 Then $\liminf\limits_{x\in\mathcal{C},|x|\rightarrow+\infty}u(x)=0$.
\end{lem}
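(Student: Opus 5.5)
We argue by contradiction, comparing $u$ with a rescaled radial bump from Lemma \ref{lem2.8} placed in balls lying deep inside $\mathcal{C}$. Suppose $\liminf_{x\in\mathcal{C},|x|\to\infty}u(x)=2s_0>0$. Then there is $R_0>0$ with $u\ge s_0$ in $\mathcal{C}\setminus B_{R_0}$. By hypothesis $\delta:=\inf_{s>s_0/2}g(s)>0$, so $-\operatorname{div}{\bf a}(\nabla u)\ge\delta$ in $\mathcal{C}\setminus B_{R_0}$ in the weak sense, where we only use test functions supported in the interior of the cone.

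Since $\mathcal{C}$ is an open cone, it contains arbitrarily large balls. Fix $x_1\in\mathcal{C}$ and a radius $\rho$ with $B_\rho(x_1)\subset\mathcal{C}$. Then $B_{t\rho}(tx_1)\subset\mathcal{C}$ for every $t>0$, and for $t$ large this ball also avoids $B_{R_0}$. In such a ball $B=B^{\widehat H_0}_{r}(y)$ (anisotropic balls are comparable to Euclidean ones by \eqref{norm0}), we build a subsolution. Since the equation is anisotropic, we take the radial profile in the $\widehat H_0$ variable: $w(x)=\phi(\widehat H_0(x-y))$.

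By Lemma \ref{8,le:2.2}, $H(\nabla\widehat H_0)=1$ and ${\bf a}(\nabla w)$ has the form $|\phi'|^{p-2}\phi'$ times a vector field whose divergence is $(N-1)/\widehat H_0$. Hence
\[
-\operatorname{div}{\bf a}(\nabla w)=-\big(|\phi'|^{p-2}\phi'\big)'-\tfrac{N-1}{r}|\phi'|^{p-2}\phi',
\]
which is exactly the radial $p$-Laplacian in the variable $r=\widehat H_0(x-y)$ (the same computation that shows the fundamental solution in Lemma \ref{lower decay} is one). Take $v_p$ from Lemma \ref{lem2.8} with $m=p$ and set $\phi(r)=A\,v_p(rR_p/r_*)$, with $r_*$ the radius of the anisotropic ball. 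Then
\[
-\Delta^H_p w=A^{p-1}(R_p/r_*)^p v_p^{p-1}\le A^{p-1}(R_p/r_*)^p .
\]
We choose $A=s_0/2$ and then $r_*$ so large that $A^{p-1}(R_p/r_*)^p<\delta$. This is possible because $t$ is arbitrary, so a ball of radius $r_*$ fits in $\mathcal{C}\setminus B_{R_0}$.

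Now $-\Delta^H_p u\ge\delta\ge-\Delta^H_p w$ in $B$, and $w=0\le u$ on $\partial B$. Lemma \ref{weak comparison1} applies because $u$ is continuous and $\overline B$ is compact inside $\mathcal{C}$, so $\{u-w+\varepsilon\le0\}$ is compactly contained in $B$. It gives $u\ge w$ in $B$, but this alone yields no contradiction, since $u\ge s_0>w$ already.

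We therefore flip the argument and compare with $w$ shifted upward. Let $m_B=\min_{\partial B}u$ and consider $\tilde w=m_B+w$. It has the same operator and satisfies $\tilde w=m_B\le u$ on $\partial B$, so comparison gives $u(y)\ge m_B+A$. Iterating this along a chain of balls, or applying it at points where $u$ is nearly minimal, contradicts the liminf. Concretely, take $x_j\in\mathcal{C}$ with $|x_j|\to\infty$ and $u(x_j)\to 2s_0$. If the balls can be centered at $x_j$, we get $u(x_j)\ge \min_{\partial B}u+A\ge s_0+s_0/2$. This is not yet contradictory, so instead we take $A$ proportional to the liminf margin: with $\ell=\liminf$, use $u\ge\ell-\eta$ outside a large ball and $A>\eta$ (we may take $A=\ell/2$ and $\eta=\ell/4$). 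The requirement on $\delta$ then only fixes $r_*$. This gives $u(x_j)\ge\ell-\eta+A>\ell+\eta$, contradicting $u(x_j)\to\ell$.

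The main obstacle is that the points $x_j$ may approach $\partial\mathcal{C}$, so no fixed-size ball around $x_j$ lies in $\mathcal{C}$. To handle this, we place the ball with its boundary condition working only on $\partial B\cap\mathcal{C}$ and use Lemma \ref{weak comparison} with $E=B$. This requires showing that $w$ also satisfies the Neumann condition on $\partial\mathcal{C}$ when $y\in\mathcal{V}$. For $y\ne$ vertex, we instead reduce to $x_j$ staying in a subcone at positive angular distance from the boundary, using the weak Harnack inequality Lemma \ref{Hanack}, which holds up to $\partial\mathcal{C}$ via the Neumann condition, to transfer smallness of $u(x_j)$ to nearby interior points. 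I expect this boundary reduction to be the delicate step.
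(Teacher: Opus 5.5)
Your interior mechanism is sound and is essentially the paper's. On an anisotropic ball of fixed radius about a far-away point, you compare $u$ with an $\widehat H_0$-radial subsolution via Lemma \ref{weak comparison1}. The paper uses the explicit profile $2\varepsilon_0-\gamma^{\frac{1}{p-1}}w(x-y^j)$, with $w=\frac{p-1}{p}N^{-\frac{1}{p-1}}\widehat H_0^{\frac{p}{p-1}}$, instead of a Lane--Emden bump. It also needs no upward shift by $m_B$: with $\delta$ fixed, the peak can be made as large as you like by enlarging the radius.

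The genuine gap is the step you leave open. A sequence $x_j$ realizing $\liminf_{\mathcal C}u$ may approach $\partial\mathcal C$, and neither remedy works as sketched.
\begin{itemize}
\item In remedy (a), balls of the fixed radius $r_*$ centred on $\mathcal V$ reach only those $x_j$ within distance $r_*$ of $\mathcal V$. For a pointed cone ($\mathcal V=\{0\}$) this eventually excludes every $x_j$. Enlarging the balls to reach $x_j$ brings in $\mathcal C\cap B_{R_0}$, where you have not shown $-\Delta^H_pu\ge\delta$.
\item In remedy (b), the weak Harnack inequality transfers smallness only up to a multiplicative constant. From $u(x_j)\to\ell$ you get interior points $z_j$ with $u(z_j)\le C\ell$, not $u(z_j)\le\ell+o(1)$. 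Your contradiction $u\ge\ell-\eta+A>\ell+\eta$ is calibrated to the exact value $\ell$ at the centre, so it does not survive this loss.
\end{itemize}
You also tacitly assume $\ell<\infty$.

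The paper sidesteps the boundary differently. It proves that the liminf vanishes along a subcone $\mathcal C_0=\{tx:\,x\in\omega_0,\ t>0\}$, with $\omega_0\subset\mathcal C\cap\mathbb S^{N-1}$ compact, and takes $y^j\in\mathcal C_0$ so that fixed balls about $y^j$ lie in $\mathcal C$. Since $u\ge0$, $\liminf_{\mathcal C}u\le\liminf_{\mathcal C_0}u$, so this suffices. Be aware that this reduction, as written, still needs $g(u)\ge\gamma$ on those balls, which leave $\mathcal C_0$. If you obtain that bound from $\liminf_{\mathcal C}u>0$ instead, the ball comparison only shows $u\to+\infty$ along $\mathcal C_0$, which is not yet a contradiction.

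A clean way to finish uses your observation (a), but globally.
\begin{enumerate}
\item If $\liminf_{\mathcal C}u>0$, then $c_0:=\inf_{\mathcal C}u>0$, since Lemma \ref{Hanack} bounds $u$ below on bounded sets. Hence $-\operatorname{div}\mathbf a(\nabla u)\ge\delta_0:=\inf_{s>c_0/2}g(s)>0$ in all of $\mathcal C$.
\item Take the vertex-centred function $W_R(x)=\delta_0^{\frac{1}{p-1}}\frac{p-1}{p}N^{-\frac{1}{p-1}}\big(R^{\frac{p}{p-1}}-\widehat H_0(x)^{\frac{p}{p-1}}\big)$. It satisfies $\mathbf a(\nabla W_R)=-\delta_0x/N$. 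Hence $-\operatorname{div}\mathbf a(\nabla W_R)=\delta_0$ in $\mathcal C$, $\mathbf a(\nabla W_R)\cdot\nu=0$ on $\partial\mathcal C$, and $W_R=0$ on $\mathcal C\cap\partial B^{\widehat H_0}_R(0)$.
\item Lemma \ref{weak comparison} with $E=B^{\widehat H_0}_R(0)$ gives $u\ge W_R$ in $\mathcal C\cap E$. Letting $R\to\infty$ at a fixed point gives $u=+\infty$, a contradiction.
\end{enumerate}
This needs no minimizing sequence, reaches points near $\partial\mathcal C$, and covers $\ell=+\infty$.

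Alternatively, repair (b) by transferring growth rather than smallness. The interior comparison gives $u(z)\ge c\,\delta^{\frac{1}{p-1}}r^{\frac{p}{p-1}}$ at points at distance $r$ from $\partial\mathcal C\cup B_{R_0}$. The scale-invariant weak Harnack inequality on a ball of radius comparable to $r$ about $x_j$ then forces $u(x_j)\to\infty$.
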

\begin{proof}
  Let $w(x):=\frac{1}{N^{\frac{1}{p-1}}}\frac{p-1}{p}\widehat{H}^{\frac{p}{p-1}}_0(x)$, then a direct calculation shows that
  \begin{equation*}
  \left\{
        \begin{aligned}
        &-\operatorname{div}\left({\bf{a}}(-\nabla w)\right)= 1 \quad\,\,\, &{\rm{in}} \,\, \mathcal{C}, \\
        &{\bf{a}}(-\nabla w)\cdot \nu =0 \quad\,\,\, &{\rm{on}} \,\, \partial\mathcal{C}.
        \end{aligned}
        \right.
 \end{equation*}
Next, let $\omega:=\mathcal{C}\cap\mathbb{S}^{N-1}$ and $\omega_0\subset\omega$ be any compact subset of $\omega$. Define
$$\mathcal{C}_0:=\{tx\,|\,x\in\omega_0,t\in(0,+\infty)\}.$$
We prove that $\liminf\limits_{x\in\mathcal{C}_0,|x|\rightarrow+\infty}u(x)=0$.
Suppose for contradiction that $\liminf\limits_{x\in\mathcal{C}_0,|x|\rightarrow+\infty}u(x)=\varepsilon_0>0$, and let $y^j\in\mathcal{C}_0$ with $|y^j|\rightarrow+\infty$ as $j\rightarrow+\infty$, such that
 $$\lim\limits_{j\rightarrow+\infty}u(y^j)=\varepsilon_0.$$
Define $\gamma:=\inf\limits_{s>\frac{\varepsilon_0}{2}}g(s)>0$, then by \eqref{eq2.6} and the conditions on $g$, there holds
$$-\operatorname{div}\left({\bf{a}}(\nabla u)\right)\geq g(u)\geq\gamma$$
for $x\in\mathcal{C}$ with $|x|$ large enough. Let
$$w_{\varepsilon_0}(x):=2\varepsilon_0-\gamma^{\frac{1}{p-1}} w(x-y^j),$$
then by the definition of $\mathcal{C}_0$, there exist $j_0\in\mathbb{N}$ and $R_{\varepsilon_0}>0$ such that $ B^{\widehat{H}_0}_{R_{\varepsilon_0}}(y^j)\subset\mathcal{C}$ holds for all $j\geq j_0$, and $w_{\varepsilon_0}$ satisfies
\begin{equation*}
  \left\{
        \begin{aligned}
        &-\operatorname{div}\left({\bf{a}}(\nabla w_{\varepsilon_0})\right)= \gamma\quad\,\,\, &{\rm{in}} \,\, B^{\widehat{H}_0}_{R_{\varepsilon_0}}(y^j), \\
        &w_{\varepsilon_0}>0\quad\,\,\, &{\rm{in}} \,\, B^{\widehat{H}_0}_{R_{\varepsilon_0}}(y^j),\\
        &w_{\varepsilon_0}=0\quad\,\,\, &{\rm{on}} \,\, \partial B^{\widehat{H}_0}_{R_{\varepsilon_0}}(y^j).
        \end{aligned}
        \right.
 \end{equation*}
 By Lemma \ref{weak comparison1} we conclude that $u\geq w_{\varepsilon_0}$ in $B^{\widehat{H}_0}_{R_{\varepsilon_0}}(y^j)$.
This yields that $u(y^j)\geq w_{\varepsilon_0}(y^j)=2\varepsilon_0$, which contradicts $\lim\limits_{j\rightarrow+\infty}u(y^j)=\varepsilon_0$. Thus we complete the proof of Lemma \ref{lem2.9}.
\end{proof}

The following Liouville theorem for differential inequality in convex cones with Neumann boundary condition plays a crucial role in the proof of Theorem \ref{liou2}.

\begin{lem}\label{lem2.10}
 Let $N\geq2$, $\mathcal{C}\subseteq\mathbb{R}^{N}$ be an open convex cone. Assume that $1<q\leq p$ and $u$ is a solution of
  \begin{equation*}
    \left\{
        \begin{aligned}
        &-\operatorname{div}\left({\bf{a}}(\nabla u)\right)\geq u^{q-1} \quad\,\,\, &{\rm{in}} \,\, \mathcal{C}, \\
        &u\geq0\quad\,\,\, &{\rm{in}} \,\, \mathcal{C},\\
        &{\bf{a}}(\nabla u)\cdot \nu =0 \quad\,\,\, &{\rm{on}} \,\, \partial\mathcal{C}.
        \end{aligned}
        \right.
  \end{equation*}
Then $u\equiv 0$ in $\mathcal{C}$.
\end{lem}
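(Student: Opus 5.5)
The strategy is to argue by contradiction: assume $u\not\equiv0$ and compare two bounds on $\int_{B_R\cap\mathcal{C}}u^{q-1}$ as $R\to\infty$. A weak Harnack lower bound will be set against a test-function upper bound. All test functions are cut-offs supported in bounded sets, so the Neumann condition ${\bf a}(\nabla u)\cdot\nu=0$ on $\partial\mathcal{C}$ makes the boundary terms vanish, and the computations are the same as in $\mathbb{R}^N$ restricted to $\mathcal{C}$.

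\emph{Step 0 (positivity).} If $u\not\equiv0$, then $u$ is a nonnegative supersolution of $-\operatorname{div}{\bf a}(\nabla u)\geq0$. By Lemma \ref{Hanack}, $\min_{B_R\cap\mathcal{C}}u>0$ for every $R$. Set
\[
m(R):=\inf_{B_{2R}\cap\mathcal{C}}u>0 .
\]
This quantity is nonincreasing in $R$; in particular $m(R)\le m(1)$ for $R\ge1$.

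\emph{Step 1 (upper bound by testing).} Let $\eta$ be a cut-off with $\eta=1$ on $B_R$, $\operatorname{supp}\eta\subset B_{2R}$, $|\nabla\eta|\le C/R$. I would test the inequality with
\[
\varphi=\eta^{p}\,\Big(\frac{u}{m}\Big)^{-\gamma}, \qquad \gamma\in(0,p-1)\ \text{small},
\]
which is admissible and bounded because $u\ge m$ on $\operatorname{supp}\eta\cap\mathcal{C}$.

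The gradient of $u^{-\gamma}$ produces the good term
\[
\gamma\int \eta^p\,u^{-\gamma-1}\,{\bf a}(\nabla u)\cdot\nabla u \;\ge\; c\int \eta^p\,u^{-\gamma-1}|\nabla u|^p .
\]
By Young's inequality this absorbs the cut-off term $\int \eta^{p-1}u^{-\gamma}|\nabla u|^{p-1}|\nabla\eta|$, leaving
\[
\int\eta^p\,u^{q-1}\Big(\frac{u}{m}\Big)^{-\gamma}\;\le\; C\,m^{\gamma}\int u^{p-1-\gamma}|\nabla\eta|^p .
\]
Next I control the right-hand side. By Lemma \ref{Hanack} applied with exponent $p-1-\gamma<p_*-1$,
\[
\int_{B_{4R}\cap\mathcal{C}}u^{p-1-\gamma}\le C R^N m(2R)^{p-1-\gamma}.
\]
Hence the right-hand side is at most $C R^{N-p}m^{\gamma}m(2R)^{p-1-\gamma}$.

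On the left I use $u/m\le 1$ wherever possible. To avoid the mismatch between $m(R)$ and $m(2R)$, I run the argument with the infimum taken over the larger ball, so that only one quantity $m$ appears; alternatively I split $u\le Km$ versus $u>Km$. The intended outcome is
\[
m^{q-1}R^{N}\;\lesssim\;\int_{B_R\cap\mathcal{C}}u^{q-1}\;\le\; C R^{N-p}\,m^{p-1}.
\]
Here the lower bound $\int_{B_R\cap\mathcal{C}} u^{q-1}\ge c\,R^N m^{q-1}$ holds because $u\ge m$ on $B_R\cap\mathcal{C}$ and $|B_R\cap\mathcal{C}|\sim R^N$ for a cone.

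\emph{Step 2 (conclusion).} If $q=p$, the estimate reads $R^N\le CR^{N-p}$, which is impossible for $R$ large. If $q<p$, it gives $m(R)^{q-p}\le CR^{-p}$, i.e.
\[
m(R)\ge cR^{p/(p-q)}\to\infty ,
\]
which contradicts the monotonicity $m(R)\le m(1)$. It also contradicts Lemma \ref{lem2.9} with $g(s)=s^{q-1}$. Therefore $u\equiv0$.

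\emph{Main obstacle.} The hard step is Step 1: obtaining the upper bound with exactly the power $m^{p-1}$ and no leftover factors of $\sup u$. This requires choosing $\gamma$ and the normalization by $m$ so that the weak Harnack inequality converts $\int u^{p-1-\gamma}$ into powers of the infimum on a comparable ball. One must also check that the constant in Lemma \ref{Hanack} is uniform in $R$, which follows by scaling since $\mathcal{C}$ is a cone. If that bookkeeping fails, I would fall back on the cruder route of testing only with $\eta^s$. In that route $\int|\nabla u|^{p-1}|\nabla\eta|$ is bounded by Hölder combined with the same Caccioppoli inequality with weight $u^{-\gamma-1}$. The comparison is then with the growth $\int_{B_R\cap\mathcal{C}}u^{q-1}\gtrsim R^{N-(N-p)(q-1)/(p-1)}\ge cR^{p}$, which comes from Lemma \ref{lower decay}; the exponent inequality holds because $q\le p$.
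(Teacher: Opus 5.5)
Your proposal is correct, and it takes a genuinely different route from the paper. The step you flag as the main obstacle is in fact immediate, though your write-up of it is slightly off. On $\operatorname{supp}\eta\cap\mathcal{C}$ one has $u/m\ge 1$, not $\le 1$. What you need is $0<\gamma<q-1$, which is possible since $q>1$. Then the left side equals $m^{\gamma}\int\eta^{p}u^{q-1-\gamma}$, and this is at least $m^{q-1}|B_R\cap\mathcal{C}|=c\,m^{q-1}R^{N}$ directly. So you never need the unproved middle bound on $\int_{B_R\cap\mathcal{C}}u^{q-1}$; your estimate only controls $\int\eta^p u^{q-1-\gamma}$.

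The $m(R)$ versus $m(2R)$ mismatch is harmless: since $m(2R)\le m(R)$ and $p-1-\gamma>0$, you get $m(R)^{\gamma}m(2R)^{p-1-\gamma}\le m(R)^{p-1}$. The $R$-independence of the constant in Lemma \ref{Hanack} holds for balls centered at the vertex, by dilation invariance of $\mathcal{C}$. Note that the scaling-correct power is $R^{-N/s}$ with $s=p-1-\gamma$, which is what you used; the $R^{-p/\gamma}$ printed in Lemma \ref{Hanack} is a misprint, as its use in Lemma \ref{lem4.4} confirms. You thus obtain $m(R)^{q-p}\le CR^{-p}$, which contradicts the monotonicity of $m$. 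The fallback via Lemma \ref{lower decay} is unnecessary.

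The paper's proof is pointwise and local. Lemma \ref{lem2.9} supplies a point $y$ far out in $\mathcal{C}$ with $u(y)\le 1$ and an anisotropic ball $B^{\widehat{H}_0}_{R_p}(y)\subset\mathcal{C}$. There the rescaled radial solution $\widetilde u$ of $-\operatorname{div}\mathbf{a}(\nabla\widetilde u)=\widetilde u^{p-1}$ from Lemma \ref{lem2.8} is lowered to $c\widetilde u$ until it touches $u$ from below. The comparison principle (Lemma \ref{weak comparison1}) then gives a contradiction via $u^{q-1}\ge u^{p-1}\ge (c\widetilde u)^{p-1}$ where $u\le 1$. That argument never sees $\partial\mathcal{C}$ or the Neumann condition beyond finding room for the ball, and it uses Lemma \ref{Hanack} only qualitatively, for $u>0$.

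Your argument is integral instead. It combines a Caccioppoli-type test with $\eta^{p}u^{-\gamma}$ (admissible up to $\partial\mathcal{C}$ because the Neumann weak formulation allows test functions not vanishing there), the quantitative, scale-uniform weak Harnack inequality up to the boundary, and $|B_R\cap\mathcal{C}|\sim R^N$. This buys several things:
\begin{itemize}
\item it dispenses with Lemmas \ref{lem2.8}, \ref{lem2.9} and the comparison principle;
\item it treats $q=p$ and $q<p$ uniformly;
\item it yields the quantitative bound $\inf_{B_{2R}\cap\mathcal{C}}u\gtrsim R^{p/(p-q)}$;
\item it avoids the small point in the touching argument that $u\le 1$ is needed on all of $D_\varepsilon$, not just at $y$.
\end{itemize}
The price is that everything rests on the boundary weak Harnack inequality in cones with $R$-independent constant, whose proof in the paper is only sketched.
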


\begin{proof}
We suppose for contradiction that $u\not\equiv 0$. Then it follows from Lemma \ref{Hanack} that $u>0$ in $\mathcal{C}$. Let $\mathcal{U}_p$ be the radial solution given by Lemma \ref{lem2.8} with $m=p$ and $R_m=R_p$. Note that Lemma \ref{lem2.9} guarantees that there exists $y\in\mathcal{C}$ such that $u(y)\leq1$. By the proof of Lemma \ref{lem2.9}, we may assume that $|y|$ large enough such that $B^{\widehat{H}_0}_{R_p}(y)\subset\mathcal{C}$. Then by a direct calculation, $\widetilde{u}(x):=\mathcal{U}_p(\widehat{H}_0(x-y))$ satisfies the following equation:

\begin{equation}\label{eq2.4}
    \left\{
        \begin{aligned}
        &-\operatorname{div}\left({\bf{a}}(\nabla \widetilde{u})\right)=\widetilde{u}^{p-1} \quad\,\,\, &{\rm{in}} \,\, B^{\widehat{H}_0}_{R_p}(y), \\
        &\widetilde{u} =0 \quad\,\,\, &{\rm{on}} \,\, \partial B^{\widehat{H}_0}_{R_p}(y),\\
        &\widetilde{u}(y)=1.
        \end{aligned}
        \right.
  \end{equation}
Next, it follows from \eqref{eq2.4} that there exists some constant $c\in(0,1]$ such that
$$u\geq c\widetilde{u}>0\quad\text{in}\,\,B^{\widehat{H}_0}_{R_p}(y)\quad\text{and}\quad u=c\widetilde{u}\quad\text{at\,\,some\,\,point\,\,in}\,\, B^{\widehat{H}_0}_{R_p}(y).$$
Moreover, it is easy to see that, for any sufficiently small constant $\varepsilon>0$, there exists a non-empty domain $D_{\varepsilon}$ strictly contain in $B^{\widehat{H}_0}_{R_p}(y)$, such that
$$c\widetilde{u}>u-\varepsilon\quad\text{in}\quad D_{\varepsilon}$$
and
$$c\widetilde{u}\leq u-\varepsilon\quad\text{in}\quad \left(B^{\widehat{H}_0}_{R_p}(y)\right) \setminus D_{\varepsilon}.$$
Then it is clearly that
\begin{equation*}
  \begin{aligned}
  \operatorname{div}\left({\bf{a}}(\nabla (c\widetilde{u}))\right)-\operatorname{div}\left({\bf{a}}(\nabla (u-\varepsilon))\right)
  &=\operatorname{div}\left({\bf{a}}(\nabla (c\widetilde{u}))\right)-\operatorname{div}\left({\bf{a}}(\nabla u)\right)\\
  &\geq -(c\widetilde{u})^{p-1}+u^{q-1}\\
  &\geq -(c\widetilde{u})^{p-1}+u^{p-1}\geq0
  \end{aligned}
\end{equation*}
holds in $D_{\varepsilon}$, where we have used the fact that $1<q\leq p$, $c\widetilde{u}\leq u\leq1$. By Lemma \ref{weak comparison1} we deduce $c\widetilde{u}\leq u-\varepsilon$ in $D_{\varepsilon}$, a contradiction. This completes our proof of Lemma \ref{lem2.10}.

\end{proof}

The following Lemma on a property of the trace of matrices will also be needed.

\begin{lem}\cite[Lemma 4.5]{AKM}\label{le:2.9}
    Let the matrix $A$ be symmetric with positive eigenvalues, and let $\lambda_{min}$ and $\lambda_{max}$ be its smallest and largest eigenvalue, respectively;
     let $B$ be a symmetric matrix. Then we have
     \begin{equation*}
        \mathrm{trace}((AB)^2)\approx  \mathrm{trace}(AB(AB)^T),
     \end{equation*}
     where the involved constants depend only on the ratio $\frac{\lambda_{max}}{\lambda_{min}}$ and $N$. In particular, we have
     \begin{equation*}
        \mathrm{trace}(AB(AB)^T)\leq N\left(\frac{\lambda_{max}}{\lambda_{min}}\right) ^2  \mathrm{trace}((AB)^2).
     \end{equation*}
\end{lem}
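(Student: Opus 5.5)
This is a pure linear algebra statement about two matrices, so I would reduce it to a diagonal computation. Since $A$ is symmetric positive definite, write $A=A^{1/2}A^{1/2}$ with $A^{1/2}$ symmetric positive definite. Set $M:=A^{1/2}BA^{1/2}$. It is symmetric because $B$ is. By cyclicity of the trace,
\begin{equation*}
\mathrm{trace}((AB)^2)=\mathrm{trace}(ABAB)=\mathrm{trace}(A^{1/2}BA^{1/2}A^{1/2}BA^{1/2})=\mathrm{trace}(M^2)=|M|^2,
\end{equation*}
where $|\cdot|$ is the Frobenius norm. In particular $\mathrm{trace}((AB)^2)\geq0$, which is not obvious a priori since $AB$ is not symmetric. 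Similarly,
\begin{equation*}
\mathrm{trace}(AB(AB)^T)=\mathrm{trace}(ABBA)=\mathrm{trace}(A^{1/2}MA^{-1/2}\,A^{-1/2}MA^{1/2})=\mathrm{trace}(MA^{-1}MA)=|AB|^2 .
\end{equation*}

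The two quantities are therefore $|M|^2$ and $|A^{1/2}MA^{-1/2}|^2$, and it remains to compare them. Diagonalize $A=Q\Lambda Q^T$ with $Q$ orthogonal, $\Lambda=\mathrm{diag}(\lambda_1,\dots,\lambda_N)$, and put $\widetilde M:=Q^TMQ$, still symmetric with $|\widetilde M|=|M|$. Then
\begin{equation*}
|A^{1/2}MA^{-1/2}|^2=\sum_{i,j}\frac{\lambda_i}{\lambda_j}\widetilde M_{ij}^2 .
\end{equation*}
Since $\lambda_{min}/\lambda_{max}\leq\lambda_i/\lambda_j\leq\lambda_{max}/\lambda_{min}$, we get
\begin{equation*}
\frac{\lambda_{min}}{\lambda_{max}}\,\mathrm{trace}((AB)^2)\leq\mathrm{trace}(AB(AB)^T)\leq\frac{\lambda_{max}}{\lambda_{min}}\,\mathrm{trace}((AB)^2).
\end{equation*}
This gives the two-sided equivalence. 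The upper bound is even stronger than the stated one, because $\frac{\lambda_{max}}{\lambda_{min}}\leq N\left(\frac{\lambda_{max}}{\lambda_{min}}\right)^2$.

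An alternative argument symmetrizes the sum: pairing $(i,j)$ with $(j,i)$ and using $\widetilde M_{ij}=\widetilde M_{ji}$ gives the weight $\tfrac12(t+t^{-1})$ with $t=\lambda_i/\lambda_j$. This slightly sharpens the constants.

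The only step needing care is the first one: recognizing that $\mathrm{trace}((AB)^2)$ equals the squared norm of the symmetric matrix $A^{1/2}BA^{1/2}$, which makes it nonnegative and comparable to $|AB|^2$. After that the proof is a routine weighted-sum estimate, and the constants depend only on the ratio $\lambda_{max}/\lambda_{min}$ (and trivially on $N$), as claimed.
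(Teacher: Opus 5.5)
Your proof is correct and complete. Note that the paper gives no proof of this lemma. It is quoted from \cite{AKM} and used once, in the proof of Proposition~\ref{regular}. There $A=\nabla\mathbf{a}^l(\nabla u_{l,k,R})$, which is symmetric because $\mathbf{a}^l$ is the gradient of the mollified potential $H^p/p$, and $B=\nabla^2u_{l,k,R}$. Only the upper bound, with some constant depending on $\lambda$ and $N$, is needed there. So your argument supplies a self-contained proof of something the paper takes on faith.

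The most direct alternative diagonalizes $A=Q\Lambda Q^T$ and sets $\widetilde B=Q^TBQ$. Then
$\mathrm{trace}((AB)^2)=\sum_{i,j}\lambda_i\lambda_j\widetilde B_{ij}^2$ and $\mathrm{trace}(AB(AB)^T)=\sum_{i,j}\lambda_i^2\widetilde B_{ij}^2$. Your conjugation by $A^{1/2}$ is the same computation in different clothing, since $\widetilde M=\Lambda^{1/2}\widetilde B\Lambda^{1/2}$, i.e.\ $\widetilde M_{ij}=\sqrt{\lambda_i\lambda_j}\,\widetilde B_{ij}$. Your weights $\lambda_i/\lambda_j$ therefore turn into exactly these sums.

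What your term-by-term comparison of the ratios buys is the constant $\kappa:=\lambda_{max}/\lambda_{min}$. The crude bounds $\lambda_i\lambda_j\ge\lambda_{min}^2$ and $\lambda_i^2\le\lambda_{max}^2$ would only give $\kappa^2$. Your symmetrized version is sharper still:
\[
\mathrm{trace}((AB)^2)\le\mathrm{trace}(AB(AB)^T)\le\tfrac12\left(\kappa+\kappa^{-1}\right)\mathrm{trace}((AB)^2).
\]
The left inequality holds for any real matrix $X$, with no hypothesis on $A$, since $\mathrm{trace}(X^2)=\langle X^T,X\rangle\le|X|^2$ by Cauchy--Schwarz. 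Consequently the factor $N\lambda^4$ in the paper's Caccioppoli step could be replaced by $\lambda^2$, though nothing downstream depends on this.
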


\section{the second order regularity for weak solution and integral inequality in convex cones}
In this section, we will prove the second order regularity $\mathbf{a}(\nabla u)\in W^{1,2}_{loc}(\overline{\mathcal{C}})$ for weak solution $u$ via an approximation method and an integral inequality in convex cones, which is the key ingredient in the proof of Liouville theorem and the classification of results.

\smallskip

We have the following regularity result for ${\bf{a}}(\nabla u):=H^{p-1}(\nabla u)\nabla H(\nabla u)$.
\begin{prop}\label{regular}
Let $N\geq2$, $\mathcal{C}\subseteq\mathbb{R}^{N}$ be an open convex cone and $H\in C^{2}_{\text{loc}}(\mathbb{R}^{N}\setminus\{0\})$ be a Finsler norm such that $H^{2}$ is uniformly convex. Assume that $f\in C^1[0,+\infty)$ is a positive function. Let $u\in W^{1,p}_{loc}(\overline{\mathcal{C}})\cap L^{\infty}_{loc}(\overline{\mathcal{C}})$ be a positive solution of \eqref{eq:1.1}. Then
$${\bf{a}}(\nabla u)\in W^{1,2}_{loc}(\overline{\mathcal{C}}).$$
\end{prop}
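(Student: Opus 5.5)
We follow the approximation scheme of \cite{CFR} (see also \cite{CL1}, \cite{DGHP}), adapted to the Neumann problem in $\mathcal{C}$. First we regularize the operator and the domain. Fix a ball $B_{R}$ centered at a vertex point, with $R$ large. Approximate the convex set $\mathcal{C}\cap B_{2R}$ from inside by smooth bounded convex domains $\Omega_k\to\mathcal{C}\cap B_{2R}$. Replace ${\bf a}$ by the nondegenerate smooth field
\[{\bf a}^{\varepsilon}(\xi):=(\varepsilon^2+H^2(\xi))^{\frac{p-2}{2}}H(\xi)\nabla H(\xi)\]
(after mollifying $H^2$ if necessary). For this field \eqref{s} and \eqref{l} hold with $|\xi|^{p-2}$ replaced by $(\varepsilon^2+|\xi|^2)^{\frac{p-2}{2}}$, uniformly in $\varepsilon$. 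Let $u_{\varepsilon,k}$ solve the mixed problem
\[-\operatorname{div}{\bf a}^{\varepsilon}(\nabla u_{\varepsilon,k})=f(u)\ \text{in }\Omega_k,\qquad {\bf a}^{\varepsilon}(\nabla u_{\varepsilon,k})\cdot\nu=0\ \text{on }\partial\Omega_k\cap\partial\mathcal{C}\text{-part},\qquad u_{\varepsilon,k}=u\ \text{on the remaining part}.\]
Alternatively, and more cleanly, use a pure Neumann problem in $\Omega_k$ with a compatible right-hand side plus a cutoff. The right-hand side $f(u)$ is frozen and lies in $L^\infty_{loc}$ because $u\in L^\infty_{loc}(\overline{\mathcal{C}})$. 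By monotonicity these problems have unique solutions. Standard theory for smooth nondegenerate problems in smooth domains gives $u_{\varepsilon,k}\in C^{1,\beta}\cap W^{2,2}$ near the Neumann part. Uniform $C^{1,\beta}$ bounds up to the boundary (Lieberman) give $\|\nabla u_{\varepsilon,k}\|_{L^\infty(\Omega_k\cap B_R)}\le C$ independently of $\varepsilon,k$.

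The core step is a uniform $W^{1,2}$ estimate for $V_{\varepsilon}:={\bf a}^{\varepsilon}(\nabla u_{\varepsilon,k})$. We differentiate the equation, i.e. test with $\partial_j(\varphi^2\partial_j u_{\varepsilon,k})$, or equivalently use the Reilly-type identity
\[\operatorname{div}\big((V\cdot\nabla)V-V\operatorname{div}V\big)=\mathrm{tr}\big((DV)^2\big)-(\operatorname{div}V)^2 .\]
We integrate this against $\varphi^2$ with $\varphi$ a cutoff supported in $B_{R}$. By the divergence theorem (Lemma \ref{int part}) a boundary term appears:
\[\int_{\partial\Omega_k}\varphi^2\big((V\cdot\nabla)V\cdot\nu-(V\cdot\nu)\operatorname{div}V\big).\]
Since $V\cdot\nu=0$, differentiating tangentially shows $(V\cdot\nabla)V\cdot\nu=-\mathrm{II}(V,V)\le0$ by convexity of $\Omega_k$. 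Hence the boundary term has a favorable sign and can be dropped. This is exactly where convexity of $\mathcal{C}$ enters. The interior term $\mathrm{tr}((DV)^2)$ controls $|DV|^2$ up to constants by Lemma \ref{le:2.9}. Here we apply Lemma \ref{le:2.9} with $A=D{\bf a}^{\varepsilon}(\nabla u)$, which is symmetric with eigenvalue ratio bounded by \eqref{s}, \eqref{l} uniformly, and $B=D^2u$, so that $DV=AB$. Also $\operatorname{div}V=-f(u)$ is bounded. Absorbing the cutoff terms via Young's inequality, together with the gradient bound, yields
\[\int_{\Omega_k\cap B_{R/2}}|DV_\varepsilon|^2\le C\]
independently of $\varepsilon,k$.

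Finally we pass to the limit. Energy estimates and uniqueness for the limit Neumann problem in $\mathcal{C}\cap B_{2R}$ (via \eqref{vector inequalities}, as in Lemma \ref{weak comparison}) give $\nabla u_{\varepsilon,k}\to\nabla u$ strongly in $L^p_{loc}$, and hence a.e., as $k\to\infty$, $\varepsilon\to0$. Then $V_\varepsilon\to{\bf a}(\nabla u)$ a.e., and with the uniform $L^\infty$ bound also in $L^2$. The uniform $W^{1,2}$ bound with weak compactness gives ${\bf a}(\nabla u)\in W^{1,2}(\mathcal{C}\cap B_{R/2})$. Since $R$ is arbitrary, ${\bf a}(\nabla u)\in W^{1,2}_{loc}(\overline{\mathcal{C}})$.

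The main obstacle is making the boundary-term sign argument rigorous. First, the approximating domains must be smooth and convex and must preserve the Neumann structure near $\partial\mathcal{C}$, including near the nonsmooth vertex set $\mathcal{V}$. Second, the approximate solutions must converge to the given $u$ rather than to some other solution. To address the first point I would choose $\Omega_k$ as smooth convex sets coinciding with $\mathcal{C}$ only asymptotically. To address the second, I would put the artificial Dirichlet data $u$ far away, on $\partial B_{2R}$, where the cutoff vanishes, so that the sign argument is only needed on the convex Neumann part.
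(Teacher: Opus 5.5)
Your architecture matches the paper's proof. You approximate the domain by smooth convex sets and ${\bf a}$ by a nondegenerate gradient field. You solve a mixed problem with Dirichlet data $u$ on the far sphere, so that uniqueness (as for \eqref{v}) identifies the limit with $u$. You differentiate the equation and discard the boundary term via $((V\cdot\nabla)V)\cdot\nu=-\mathrm{II}(V,V)\le 0$. Then Lemma \ref{le:2.9} turns $\mathrm{tr}((DV)^2)$ into $|DV|^2$. Your explicit ${\bf a}^{\varepsilon}$ and bounded smooth convex $\Omega_k$ are fine substitutes for the paper's mollified ${\bf a}^l$ and smooth cones $\mathcal{C}_k$. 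They even spare you the cutoff near the vertex that the paper uses.

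The gap is the claim that Lieberman's estimates give $\|\nabla u_{\varepsilon,k}\|_{L^\infty(\Omega_k\cap B_R)}\le C$ independently of $k$.

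\textbf{Why it fails.} Boundary $C^{1,\beta}$ estimates for Neumann problems depend on the $C^{1,\alpha}$ character of $\partial\Omega_k$. That character blows up as $\Omega_k\to\mathcal{C}\cap B_{2R}$ near the vertex set $\mathcal{V}$ and along any edge of $\partial\mathcal{C}$ (e.g.\ $\mathbb{R}^N_{2^{-m}}$ with $m\ge 2$). This is exactly where the proposition has content.

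\textbf{Why it matters.} You need this bound to control the cutoff term produced by Young's inequality,
$\int|{\bf a}^{\varepsilon}(\nabla u_{\varepsilon,k})|^2|\nabla\varphi|^2\lesssim\int(\varepsilon^2+|\nabla u_{\varepsilon,k}|^2)^{p-1}|\nabla\varphi|^2$.
For $p\le 2$ this is dominated by the uniform energy bound, so your argument closes. For $p>2$ the energy only controls $\|\nabla u_{\varepsilon,k}\|_{L^p}$. Nothing in your proposal yields a $k$-uniform $L^{2(p-1)}$ bound on the gradient up to $\partial\mathcal{C}$. Any route to a uniform Lipschitz bound would have to exploit convexity again; it does not follow from smooth-boundary estimates.

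\textbf{How the paper closes it.} It combines the Caccioppoli inequality \eqref{eq3.12} with a Sobolev inequality on the convex pieces and the iteration from the proof of Proposition 4.3 in \cite{ACF}. This gives the reverse H\"older bounds \eqref{al1}--\eqref{al2}. Their right-hand sides contain only $\|{\bf a}(\nabla u_{k,R})\|_{L^1}$, which is controlled by $\|\nabla u_{k,R}\|_{L^p}^{p-1}$ and hence by the energy, and $\|f(u)\|_{L^2}$. That is what makes the $W^{1,2}$ bound uniform in $k$.

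With this repair, your final limiting step should also drop the $L^\infty$ bound. Use weak $W^{1,2}$ compactness together with interior a.e.\ convergence of $\nabla u_{\varepsilon,k}$, which follows from interior $C^{1,\beta}$ estimates that are genuinely uniform.
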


\begin{proof}
   The estimate ${\bf{a}}(\nabla u)\in W^{1,2}_{loc}(\mathcal{C})$ can be obtained from \cite[Theorem 1.1]{ACF}. Our task is to show further that ${\bf{a}}(\nabla u)\in W^{1,2}_{loc}(\overline{\mathcal{C}})$. The main difficulties are the degeneracy of the equation and the non-smoothness of $\partial\mathcal{C}$. We will argue by the approximation method in \cite{AKM,CFR, CM}.

   Thanks to the proof of \cite[Theorem 1.3]{CRS}, there exists a sequence of convex cones $\{\mathcal{C}_k\}$ such that $\mathcal{C}_k\subset\mathcal{C}$, $\partial\mathcal{C}_k\setminus\{0\}$ is smooth and $\mathcal{C}_k$ approximate $\mathcal{C}$. For any given $R>1$ large, we define
 \begin{equation*}
    \mathcal{C}_{k,R}:=\mathcal{C}_k\cap B_R(0),\quad\, \Gamma_{k,0}^R:=\mathcal{C}_k\cap \partial B_R(0),\quad\, \Gamma_{k,1}^R:=\partial\mathcal{C}_k\cap  B_R(0).
 \end{equation*}
    For $k\in\mathbb{N}$ fixed, we let $u_k\in W^{1,p}(\mathcal{C}_{k,2R})$ be the weak solution to
    \begin{equation}\label{eq:3.1}
        \left\{
            \begin{aligned}
            & -\Delta ^{H}_{p}u_{k,R}=f(u)\quad \, &{\rm{in}}\ \mathcal{C}_{k,2R},\\
            &u_{k,R}=u \quad \, &{\rm{on}}\, \Gamma_{k,0}^{2R},\\
            &{\bf{a}}(\nabla u_{k,R})\cdot \nu =0\quad\, &{\rm{on}}\ \Gamma_{k,1}^{2R}.
            \end{aligned}
            \right.
    \end{equation}
    The existence of solutions to \eqref{eq:3.1} can be derived from the variational minimization problem $\min\limits_{v}\left\{J_{k,R}(v):=\int_{\mathcal{C}_{k,2R}}\left(\frac{H^{p}(\nabla(v+u))}{p}-f(u)v\right)\mathrm{d} x \mid v\in W^{1,p}(\mathcal{C}_{k,2R}), v=0 \,\, \text{on}\,\, \Gamma_{k,0}^{2R}\right\}$ and the Ekeland variational principle.
    Since $u$ is locally bounded, by the elliptic estimates in \cite{L,S,T}, we obtain
    that $\{u_{k,R}\}$ are uniformly bounded in $C^{1,\theta}_{loc}(\overline{\mathcal{C}_{k,2R}}\setminus((\partial\mathcal{C}_k\cap\partial B_{2R}(0))\cup\{0\}))\cap C^{0,\theta}_{loc}(\overline{\mathcal{C}_{k,2R}})$ with respect to $k$. Then
    by the Ascoli--Arzel$\acute{\text{a}} $'s Theorem and a diagonal process, we have
    \begin{equation}\label{lim}
      u_{k,R}\rightarrow v\,\,\, \text{in}\,\,\, C^{1}_{loc}(\mathcal{C}\cap B_{2R}(0)),\qquad \text{as}\ k\rightarrow+\infty.
    \end{equation}

\smallskip

In view of \eqref{eq:3.1}, we deduce that $v$ satisfies
 \begin{equation}\label{v}
     \left\{
         \begin{aligned}
         & -\operatorname{div}{\bf{a}}(\nabla v)=f(u)\quad \, &{\rm{in}}\,\, \mathcal{C}\cap B_{2R}(0),\\
         &v=u \quad \, &{\rm{on}}\, \mathcal{C}\cap \partial B_{2R}(0),\\
         &{\bf{a}}(\nabla v)\cdot \nu =0\quad\, &{\rm{on}}\,\, \partial\mathcal{C}\cap B_{2R}(0).
         \end{aligned}
         \right.
 \end{equation}
 Note that the solution of \eqref{v} is unique. Actually, assume that $u_1$, $v_1$ are two solutions of equation \eqref{v}, then we have
 \begin{equation*}
     \left\{
         \begin{aligned}
         & -\operatorname{div}{\bf{a}}(\nabla u_1)+\operatorname{div}{\bf{a}}(\nabla v_1)=0\quad \, &{\rm{in}}\,\, \mathcal{C}\cap B_{2R}(0),\\
         &u_1=v_1=u \quad \, &{\rm{on}}\, \mathcal{C}\cap \partial B_{2R}(0),\\
         &{\bf{a}}(\nabla u_1)\cdot \nu ={\bf{a}}(\nabla v_1)\cdot \nu=0\quad\, &{\rm{on}}\,\, \partial\mathcal{C}\cap B_{2R}(0).
         \end{aligned}
         \right.
 \end{equation*}
Choosing $u_1-v_1$ as a test function, we get
$$\int_{\mathcal{C}}\langle {\bf{a}}(\nabla u_1)-{\bf{a}}(\nabla v_1), \nabla u_1-\nabla v_1\rangle {\rm{d}}x-\int_{\partial\mathcal{C}} ({\bf{a}}(\nabla u_1)-{\bf{a}}(\nabla v_1))\cdot \nu(u_1-v_1){\rm{d}}\mathcal{H}^{N-1}=0.$$
Note that $({\bf{a}}(\nabla u_1)-{\bf{a}}(\nabla v_1))\cdot \nu=0$ on $\partial\mathcal{C}$, and it follows from \eqref{vector inequalities} that
\begin{equation*}
     c_1\int_{\mathcal{C}}(|\nabla u_1|+|\nabla v_1|)^{p-2}|\nabla(u_1-v_1)|^2{\rm {d}}x\leq\int_{\mathcal{C}}\langle {\bf{a}}(\nabla u_1)-{\bf{a}}(\nabla v_1), \nabla u_1-\nabla v_1\rangle {\rm{d}}x=0,
\end{equation*}
and hence $\nabla(u_1-v_1)=0$. Since convex cone $\mathcal{C}\cap B_{2R}(0)$ is connected, then $u_1=v_1+C_1$ for some constant $C_1$. Moreover, it follows from $u_1=v_1=u$ on $\mathcal{C}\cap \partial B_{2R}(0)$ that $C_1=0$, hence the solution of \eqref{v} is unique. Since both $u$ and $v$ are solutions of \eqref{v}, thus we have $v\equiv u$. Then from \eqref{lim}, we infer that, for any $R>1$ large,
\begin{equation}\label{lim2}
    u_{k,R}\rightarrow u\,\,\, \text{in}\,\,\, C^{1}_{loc}(\mathcal{C}\cap B_{2R}(0)),\qquad \text{as}\ k\rightarrow+\infty.
\end{equation}

\smallskip

Let $\{\phi _l\}$ with $l\in\mathbb{N}$ be a family of radially symmetric smooth mollifiers and define
\begin{equation*}
 {\bf{a}}^l(z):=({\bf{a}}\ast \phi_l)(z)\qquad \text{for}\,\, z\in\mathbb{R}^N,
\end{equation*}
where ${\bf{a}}\ast\phi_l$ stands for the convolution. By the properties of convolution and continuity of ${\bf{a}}(\cdot)$, we have
${\bf{a}}^l\rightarrow {\bf{a}}$ uniformly on compact subsets of $\mathbb{R}^N$, as $l\rightarrow+\infty$.  By \cite[Lemma 2.4]{FF} and its proof therein, we know that
${\bf{a}}^l$ satisfies
\begin{equation}\label{eq3.5}
 \left\langle \nabla {\bf{a}}^l(z)\xi,\xi\right\rangle\geq\frac{1}{\lambda}(|z|^2+s_l^2)^{\frac{p-2}{2}}|\xi|^2
 \qquad \text{and}\qquad |\nabla {\bf{a}}^l(z)|\leq \lambda (|z|^2+s_l^2)^{\frac{p-2}{2}}
\end{equation}
for every $z$, $\xi\in\mathbb{R}^N$, with $s_l\neq0$ and $s_l\rightarrow0$ as $l\rightarrow+\infty$, and some constant $\lambda>0$.

Let $u_{l,k,R}\in W^{1,p}(\mathcal{C}_{k,2R})$ be a weak solution to
\begin{equation}\label{ulk}
    \left\{
            \begin{aligned}
            & -\Delta ^{H,l}_{p}u_{l,k,R}=-\operatorname{div}\left({\bf{a}}^l(\nabla u_{l,k,R})\right)=f(u)\quad \, &{\rm{in}}\,\, \mathcal{C}_{k,2R},\\
            &u_{l,k,R}=u \quad \, &{\rm{on}}\, \Gamma_{k,0}^{2R},\\
            &{\bf{a}}^l(\nabla u_{l,k,R})\cdot \nu =0\quad\, &{\rm{on}}\,\, \Gamma_{k,1}^{2R}.
            \end{aligned}
            \right.
\end{equation}
By the locally boundedness of $u$ and the elliptic estimates in \cite{L,S,T}, we deduce that $\{u_{l,k,R}\}$ are bounded in
 $C^{1,\theta}_{loc}(\overline{\mathcal{C}_{k,2R}}\setminus((\partial\mathcal{C}_k\cap\partial B_{2R}(0))\cup\{0\}))\cap C^{0,\theta}_{loc}(\overline{\mathcal{C}_{k,2R}})$ uniformly in $l$, as $l\rightarrow+\infty$. Then by the Ascoli--Arzel$\acute{\text{a}}$'s Theorem and a diagonal process, we obtain that  $u_{l,k,R}$
   converges in $C^{1}_{loc}(\overline{\mathcal{C}_{k,2R}}\setminus((\partial\mathcal{C}_k\cap\partial B_{2R}(0))\cup\{0\}))$ to the unique solution $\overline{u}_{k,R}$ to
 \begin{equation}\label{uk bar}
    \left\{
        \begin{aligned}
        & -\Delta ^{H}_{p}\overline{u}_{k,R}=f(u)\quad \, &{\rm{in}}\,\, \mathcal{C}_{k,2R},\\
        &\overline{u}_{k,R}=u \quad \, &{\rm{on}}\, \Gamma_{k,0}^{2R},\\
        &{\bf{a}}(\nabla \overline{u}_{k,R})\cdot \nu =0\quad\, &{\rm{on}}\,\, \Gamma_{k,1}^{2R}.
        \end{aligned}
        \right.
   \end{equation}
Note that $u_k$ is also a solution of \eqref{uk bar}, it follows from the uniqueness that $\overline{u}_{k,R}=u_{k,R}$, and hence $u_{l,k,R}\rightarrow u_{k,R}$ in $C^{1}_{loc}(\overline{\mathcal{C}_{k,2R}}\setminus((\partial\mathcal{C}_k\cap\partial B_{2R}(0))\cup\{0\}))$, as $l\rightarrow+\infty$.

\smallskip
Let $L>2R$ to be chosen later. For $\delta>0$ small enough, such that $2\delta<\frac{1}{L}$, we define the set
 \begin{equation*}
    \mathcal{C}_{k,R}^{\delta}:=\{x\in\mathcal{C}_{k,R}\mid\,\text{dist}(x,\partial\mathcal{C}_{k,R})>\delta\}.
 \end{equation*}
Let $\varphi \in C^{1}_c(B_{2R}(0)\setminus B_{\frac{1}{L}}(0))$.  Since $\mathcal{C}_{k,2R}\cap \text{supp}(\varphi)$
 is piecewise smooth, for $\delta>0$ small we can obtain
 that $(\mathcal{C}_{k,2R}^{\delta}\setminus \mathcal{C}_{k,2R}^{2\delta})\cap\text{supp}(\varphi)$ is piecewise smooth. In particular,
 every $x\in(\mathcal{C}_{k,2R}^{\delta}\setminus \mathcal{C}_{k,2R}^{2\delta})\cap\text{supp}(\varphi)$ can be written as
 \begin{equation*}
    x=y-|x-y|\nu(y),
 \end{equation*}
 where $y=y(x)\in\partial\mathcal{C}_{k,2R}^{\delta}$ is the unique projection of $x$ on $\partial\mathcal{C}_{k,2R}^{\delta}$ and $\nu(y)$
 is the unit outer normal vector to $\partial\mathcal{C}_{k,2R}^{\delta}$ at $y$. Moreover, by the method in \cite[Formula (14.98)]{GT}, we can
  define a $C^1$ function $g$, such that $x=g(y,d)$ holds for every
   $x\in(\mathcal{C}_{k,2R}^{\delta}\setminus \mathcal{C}_{k,2R}^{2\delta})\cap\text{supp}(\varphi)$, where $y\in\partial\mathcal{C}_{k,2R}^{\delta}$ is the projection of $x$ on $\partial\mathcal{C}_{k,2R}^{\delta}$ and $d=|x-y|$. Since $u_{l,k,R}$
     solves the non-degenerate equation, we have $u_{l,k,R}\in C^1(\overline{\mathcal{C}_{k,2R}}\setminus((\partial\mathcal{C}_k\cap\partial B_{2R}(0))\cup\{0\}))\cap W^{2,2}_{loc}(\overline{\mathcal{C}_{k,2R}})$ and
     ${\bf{a}}^l(\nabla u_{l,k,R})\in W^{1,2}_{loc}(\overline{\mathcal{C}_{k,2R}})$. Moreover, since
       $\mathcal{C}_{k,2R}^{\delta}\setminus \mathcal{C}_{k,2R}^{2\delta}$ is piecewise smooth, then $u_{l,k,R}\in C^2(\mathcal{C}_{k,2R}^{\delta}\setminus \mathcal{C}_{k,2R}^{2\delta})$.
\smallskip

Let $\zeta _{\delta}: \mathcal{C}_{k,2R}\rightarrow[0,1]$ be a piecewise smooth function such that
\begin{equation*}
    \zeta _{\delta}=1\ \text{in}\ \mathcal{C}_{k,2R}^{2\delta},\quad
    \zeta _{\delta}=0\ \text{in}\ \mathcal{C}_{k,2R}\setminus\mathcal{C}_{k,2R}^{\delta},\quad \text{and}\quad
    \nabla\zeta _{\delta}=-\frac{1}{\delta}\nu(y(x))\,\,\, \text{in}\,\, \mathcal{C}_{k,2R}^{\delta}\setminus\mathcal{C}_{k,2R}^{2\delta}.
\end{equation*}
Choosing $\psi \in C^{1}_c(B_{2R}(0)\setminus B_{\frac{1}{L}}(0))$ as the test function in \eqref{ulk}, and integrating by
parts, we get
\begin{equation}\label{eq3.8}
    \int_{\mathcal{C}_{k,2R}}{\bf{a}}^l(\nabla u_{l,k,R})\cdot\nabla \psi \mathrm{d}x-\int_{\partial\mathcal{C}_{k,2R}}\psi {\bf{a}}^l(\nabla u_{l,k,R})\cdot\nu \mathrm{d}\mathcal{H}^{N-1}
    =\int_{\mathcal{C}_{k,2R}}f(u)\psi \mathrm{d}x.
\end{equation}
Since
\begin{equation*}
    \int_{\partial\mathcal{C}_{k,2R}}\psi {\bf{a}}^l(\nabla u_{l,k,R})\cdot\nu \mathrm{d}\mathcal{H}^{N-1}=\int_{\Gamma_{k,0}^{2R}}\psi {\bf{a}}^l(\nabla u_{l,k,R})\cdot\nu \mathrm{d}\mathcal{H}^{N-1}
    +\int_{\Gamma_{k,1}^{2R}}\psi {\bf{a}}^l(\nabla u_{l,k,R})\cdot\nu \mathrm{d}\mathcal{H}^{N-1},
\end{equation*}
in view of $\psi \in C^{\infty}_c(B_{2R}(0)\setminus B_{\frac{1}{L}}(0))$ and the boundary condition in \eqref{ulk}, we obtain
 that the boundary integral term in \eqref{eq3.8} vanishes, thus \eqref{eq3.8} becomes
 \begin{equation}\label{eq3.9}
    \int_{\mathcal{C}_{k,2R}}{\bf{a}}^l(\nabla u_{l,k,R})\cdot\nabla \psi \mathrm{d}x= \int_{\mathcal{C}_{k,2R}}f(u)\psi \mathrm{d}x.
 \end{equation}

Next, we choose $\psi=\partial_m(\varphi\zeta_{\delta})$ in \eqref{eq3.9} with $m\in\{1,\cdots,N\}$, then obtain that
\begin{equation*}
    \sum\limits_{i=1}^{N}\left(\int_{\mathcal{C}_{k,2R}}\partial_m (a^l_i(\nabla u_{l,k,R}))\zeta_{\delta}\partial_i\varphi \mathrm{d}x+\int_{\mathcal{C}_{k,2R}}\partial_m (a^l_i(\nabla u_{l,k,R}))\varphi\partial_i\zeta_{\delta} \mathrm{d}x\right)
    = \int_{\mathcal{C}_{k,2R}}\partial_m f(u)\zeta_{\delta}\varphi \mathrm{d}x,
\end{equation*}
where the notation ${\bf{a}}^l=(a^l_1,\cdots,a^l_N)$ denotes the components of the vector field ${\bf{a}}^l$. Recalling the definition of $\zeta_{\delta}$, we have
\begin{equation*}
    \lim\limits_{\delta\rightarrow0}\int_{\mathcal{C}_{k,2R}}\partial_m (a^l_i(\nabla u_{l,k,R}))\zeta_{\delta}\partial_i\varphi \mathrm{d}x=
    \int_{\mathcal{C}_{k,2R}}\partial_m (a^l_i(\nabla u_{l,k,R}))\partial_i\varphi \mathrm{d}x.
\end{equation*}

Let $f(x)=\partial_m (a^l_i(\nabla u_{l,k,R}))\varphi(x) $, by $\nabla\zeta _{\delta}=-\frac{1}{\delta}\nu(y)$ and the co-area formula, we have
 \begin{equation*}
    \begin{aligned}
        \int_{\mathcal{C}_{k,2R}^{\delta}\setminus \mathcal{C}_{k,2R}^{2\delta}}f\partial_i\zeta_{\delta}\mathrm{d}x
            &=-\frac{1}{\delta} \int_{\mathcal{C}_{k,2R}^{\delta}\setminus \mathcal{C}_{k,2R}^{2\delta}}\nu_i(y(x))f\mathrm{d}x\\
            &=-\frac{1}{\delta}\int_{\delta}^{2\delta}\mathrm{d}t\int_{\partial\mathcal{C}_{k,2R}^{t}}\nu_i(y(x))f(y-t\nu(y))|\det(Dg)|\mathrm{d}\mathcal{H}^{N-1}\\
            &=-\int_{1}^{2}\mathrm{d}s\int_{\partial\mathcal{C}_{k,2R}^{s\delta}}\nu_i(y(x))f(y-s\delta\nu(y))|\det(Dg)|\mathrm{d}\mathcal{H}^{N-1}.
    \end{aligned}
 \end{equation*}
Since $\nabla u_{l,k,R}\in C^1(\mathcal{C}_{k,2R}^{\delta}\setminus\mathcal{C}_{k,2R}^{2\delta})$, one has $f(x)=\partial_m (a^l_i(\nabla u_{l,k,R}))\varphi(x) \in C^{0}$, hence we can pass to the limit and deduce that
\begin{equation*}
    \lim\limits_{\delta\rightarrow0}\int_{\mathcal{C}_{k,2R}}\partial_m (a^l_i(\nabla u_{l,k,R}))\varphi\partial_i\zeta_{\delta} \mathrm{d}x
    =-\int_{\partial\mathcal{C}_{k,2R}}\partial_m (a^l_i(\nabla u_{l,k,R}))\varphi(x)\nu_i \mathrm{d}\mathcal{H}^{N-1}.
\end{equation*}
Thus we obtain
\begin{equation}\label{eq3.10}
    \sum\limits_{i=1}^{N} \left(\int_{\mathcal{C}_{k,2R}}\partial_m (a^l_i(\nabla u_{l,k,R}))\partial_i\varphi \mathrm{d}x
    -\int_{\partial\mathcal{C}_{k,2R}}\partial_m (a^l_i(\nabla u_{l,k,R}))\varphi(x)\nu_i \mathrm{d}\mathcal{H}^{N-1}\right)
     =-\int_{\mathcal{C}_{k,2R}}\partial_m \varphi f(u)\mathrm{d}x.
\end{equation}

Choose $\varphi=a^l_m(\nabla u_{l,k,R})\rho ^2$, $m\in\{1,2,\cdots,N\}$, where $\rho \in C_c^{\infty}(B_{2R}(0)\setminus B_{\frac{1}{L}}(0))$. Next, using the fact that $\mathcal{C}_{k,2R}$ is convex and the argument as in the proof of \cite[Proposition 2.8]{CFR} (see Formulae (2.45)-(2.50) therein), we will show that the summation $\sum\limits_{m=1}^{N}$ over the second term on left-hand side of \eqref{eq3.10} is negative. First, one has
\begin{equation}\label{eq3.11}
    \partial_m (a^l_i(\nabla u_{l,k,R}))a^l_m(\nabla u_{l,k,R})\rho ^2\nu_i=a_m^l(\nabla u_{l,k,R})\partial_m (a^l_i(\nabla u_{l,k,R})\nu_i)\rho^2-a_m^l(\nabla u_{l,k,R})a_i^l(\nabla u_{l,k,R})\rho^2\partial_m\nu_i.
\end{equation}
Note that $\partial_m\nu_i(x)$ is the second fundamental form $\mathrm{II}_x$ of $\Gamma^{R}_{k,1}$ at $x$:
$$\sum\limits_{i,m=1}^{N}\partial_m\nu_i a^l_i(\nabla u_{l,k,R})a^l_m(\nabla u_{l,k,R})=\mathrm{II}_x({\bf{a}}^l(\nabla u_{l,k,R}),{\bf{a}}^l(\nabla u_{l,k,R})).$$
Since $\mathcal{C}_{k,R}$ is convex, then $\mathrm{II}_x$ is nonnegative definite, we deduce that
\begin{equation*}
    \sum\limits_{i,m=1}^{N}\partial_m\nu_i a^l_i(\nabla u_{l,k,R})a^l_m(\nabla u_{l,k,R})\rho^2=\mathrm{II}_x({\bf{a}}^l(\nabla u_{l,k,R}),{\bf{a}}^l(\nabla u_{l,k,R}))\rho^2\geq0.
\end{equation*}
Then, by \eqref{eq3.11}, we have
\begin{equation*}
    \begin{aligned}
        &\quad \sum\limits_{i,m=1}^{N}\int_{\partial\mathcal{C}_{k,2R}}\partial_m (a^l_i(\nabla u_{l,k,R}))a^l_m(\nabla u_{l,k,R})\rho ^2\nu_i\mathrm{d}\mathcal{H}^{N-1} \\
        &\leq \sum\limits_{i,m=1}^{N}\int_{\partial\mathcal{C}_{k,2R}}a_m^l(\nabla u_{l,k,R})\partial_m (a^l_i(\nabla u_{l,k,R})\nu_i)\rho^2\mathrm{d}\mathcal{H}^{N-1}\\
        &=\int_{\partial\mathcal{C}_{k,2R}}{\bf{a}}^l(\nabla u_{l,k,R})\cdot\nabla({\bf{a}}^l(\nabla u_{l,k,R})\cdot\nu)\rho^2\mathrm{d}\mathcal{H}^{N-1}.
    \end{aligned}
\end{equation*}
Note that ${\bf{a}}^l(\nabla u_{l,k,R})\cdot\nu=0$ on $\Gamma_{k,1}^{2R}$, thus ${\bf{a}}^l(\nabla u_{l,k,R})$ is the tangent vector field of $\partial\Gamma_{k,1}^{2R}$, which implies that ${\bf{a}}^l(\nabla u_{l,k,R})\cdot\nabla({\bf{a}}^l(\nabla u_{l,k,R})\cdot\nu)$ is the tangent derivative of ${\bf{a}}^l(\nabla u_{l,k,R})\cdot\nu$, then it follows from ${\bf{a}}^l(\nabla u_{l,k,R})\cdot\nu=0$ on $\Gamma_{k,1}^{2R}$ that ${\bf{a}}^l(\nabla u_{l,k,R})\cdot\nabla({\bf{a}}^l(\nabla u_{l,k,R})\cdot\nu)=0$ on $\Gamma_{k,1}^{2R}$. Since $\partial\mathcal{C}_{k,2R}=\Gamma_{k,0}^{2R}\cup\Gamma_{k,1}^{2R}$ and $\rho=0$ on $\Gamma_{k,0}^{2R}$, one has
$$\int_{\partial\mathcal{C}_{k,2R}}{\bf{a}}^l(\nabla u_{l,k,R})\cdot\nabla({\bf{a}}^l(\nabla u_{l,k,R})\cdot\nu)\rho^2\mathrm{d}\mathcal{H}^{N-1}=0,$$
thus we obtain that
\begin{equation*}
    \sum\limits_{i,m=1}^{N}\int_{\partial\mathcal{C}_{k,2R}}\partial_m (a^l_i(\nabla u_{l,k,R}))a^l_{m}(\nabla u_{l,k,R})\rho ^2\nu_i \mathrm{d}\mathcal{H}^{N-1}\leq0.
\end{equation*}
Hence by \eqref{eq3.10}, we have
\begin{equation*}
    \sum\limits_{i,m=1}^{N}\int_{\mathcal{C}_{k,2R}}\partial_m (a^l_i(\nabla u_{l,k,R}))\partial_i(a^l_m(\nabla u_{l,k,R})\rho ^2)\mathrm{d}x
    \leq \int_{\mathcal{C}_{k,2R}}|\partial_m \varphi| f(u)\mathrm{d}x.
\end{equation*}
Let $A:=\nabla a^l(z)|_{z=\nabla u_{l,k,R}}=\nabla {\bf{a}}^l(\nabla u_{l,k,R})$, $B:=\nabla^2 u_{l,k,R}(x)$. Then $A$, $B$ are symmetric matrices. Let $\lambda_{min}$ be the
smallest eigenvalue of $A$, and $\lambda_{max}$ be the largest eigenvalue of $A$, then by \eqref{eq3.5}, we know that
$$\frac{1}{\lambda}(|\nabla u_{l,k,R}|^2+s_l^2)^{\frac{N-2}{2}}\leq\lambda_{min}\leq\lambda_{max}\leq\lambda(|\nabla u_{l,k,R}|^2+s_l^2)^{\frac{N-2}{2}}.$$
Note that
$$\nabla ({\bf{a}}^l(\nabla u_{l,k,R}))=\nabla {\bf{a}}^l(\nabla u_{l,k,R})\nabla^2u_{l,k,R}=AB,$$
and
$$\mathrm{trace} [(\nabla ({\bf{a}}^l(\nabla u_{l,k,R})))^2]=\sum\limits_{i,m=1}^{N}\partial_i (a^l_m(\nabla u_{l,k,R}))\partial_m (a^l_i(\nabla u_{l,k,R})),$$
thus we can deduce from  $\frac{\lambda_{max}}{\lambda_{min}}\leq\lambda^{2}$ and Lemma \ref{le:2.9} that
\begin{equation*}
    \begin{aligned}
        N\lambda^4 \mathrm{trace} [(\nabla ({\bf{a}}^l(\nabla u_{l,k,R})))^2]
        &=  N\lambda^4\mathrm{trace}(\nabla {\bf{a}}^l(\nabla u_{l,k,R})\nabla^2u_{l,k,R}\nabla {\bf{a}}^l(\nabla u_{l,k,R})\nabla^2u_{l,k,R})\\
        &\geq  \mathrm{trace}[\nabla {\bf{a}}^l(\nabla u_{l,k,R})\nabla^2u_{l,k,R}(\nabla {\bf{a}}^l(\nabla u_{l,k,R})\nabla^2u_{l,k,R})^T]\\
        &=|\nabla {\bf{a}}^l(\nabla u_{l,k,R})\nabla^2u_{l,k,R}|^2\\
        &=|\nabla ({\bf{a}}^l(\nabla u_{l,k,R}))|^2.
    \end{aligned}
\end{equation*}
As a consequence, we have
\begin{equation*}
    \begin{aligned}
        &\quad \sum\limits_{i,m=1}^{N}\partial_m (a^l_i(\nabla u_{l,k,R}))\partial_i(a^l_m(\nabla u_{l,k,R})\rho^2) \\
        &=\sum\limits_{i,m=1}^{N}\partial_m (a^l_i(\nabla u_{l,k,R}))\partial_i (a^l_m(\nabla u_{l,k,R}))\rho^2\\
        &\quad +2\sum\limits_{i,m=1}^{N}\partial_m (a^l_i(\nabla u_{l,k,R}))a^l_m(\nabla u_{l,k,R})\rho_i\rho\\
        &\geq \frac{1}{N\lambda^4}|\nabla {\bf{a}}^l(\nabla u_{l,k,R})|^2\rho^2+2\sum\limits_{i,m=1}^{N}\partial_m (a^l_i(\nabla u_{l,k,R}))a^l_m(\nabla u_{l,k,R})\rho_i\rho.
    \end{aligned}
\end{equation*}
Then, by Young's inequality, we obtain the following Caccioppoli type estimate:
\begin{eqnarray}\label{eq3.12}
    \quad \int_{\mathcal{C}_{k,2R}}\left\lvert \nabla {\bf{a}}^l(\nabla u_{l,k,R})\right\rvert^2\rho^2\mathrm{d}x \leq C\left(\int_{\mathcal{C}_{k,2R}}\left\lvert {\bf{a}}^l(\nabla u_{l,k,R})\right\rvert^2\left\lvert\nabla\rho \right\rvert^2 \mathrm{d}x+
     \int_{\mathcal{C}_{k,2R}}(f(u))^2\rho^2\mathrm{d}x\right)
\end{eqnarray}
for some constant $C=C(N,\lambda)$. Since $ \rho \in C_c^{\infty}(B_{2R}(0)\setminus B_{\frac{1}{L}}(0))$, by approximation, we know that \eqref{eq3.12} holds for any  $\rho \in C_c^{\infty}(B_{2R}(0))$.  For any open ball  $B_{2r}\subset B_R(0)$, by \eqref{eq3.12} and the proof of Proposition 4.3 in \cite{ACF}, there exists constant $C$ depending only on $N$, $p$, $\lambda$ and $H$, scuh that
\begin{equation}\label{al1}
\int_{\mathcal{C}_{k,R}\cap B_r}\left\lvert {\bf{a}}^l(\nabla u_{l,k,R})\right\rvert^2\mathrm{d}x\leq C\left[r^{-N}\left(\int_{\mathcal{C}_{k,R}\cap (B_{2r}\backslash B_r) }\left\lvert {\bf{a}}^l(\nabla u_{l,k,R})\right\rvert\mathrm{d}x\right)^2+r^2\int_{\mathcal{C}_{k,R}\cap B_{2r}}(f(u))^2\mathrm{d}x \right]
\end{equation}
and
\begin{equation}\label{al2}
 \int_{\mathcal{C}_{k,R}\cap B_{\frac{r}{2}}}\left\lvert \nabla {\bf{a}}^l(\nabla u_{l,k,R})\right\rvert^2\mathrm{d}x\leq C\left[r^{-N-2}\left(\int_{\mathcal{C}_{k,R}\cap (B_{2r}\backslash B_r) }\left\lvert {\bf{a}}^l(\nabla u_{l,k,R})\right\rvert\mathrm{d}x\right)^2+\int_{\mathcal{C}_{k,R \cap B_{2r}}}(f(u))^2\mathrm{d}x \right].
\end{equation}
Since $\{u_{l,k,R}\}$ are bounded in
 $C^{1,\theta}_{loc}(\overline{\mathcal{C}_{k,2R}}\setminus((\partial\mathcal{C}_k\cap\partial B_{2R}(0))\cup\{0\}))\cap C^{0,\theta}_{loc}(\overline{\mathcal{C}_{k,2R}})$ uniformly in $l$, as $l\rightarrow+\infty$, we deduce that ${\bf{a}}^l(\nabla u_{l,k,R})\in W^{1,2}_{loc}(\overline{\mathcal{C}_{k,R}})$ and $\{{\bf{a}}^l(\nabla u_{l,k,R})\}$
is uniformly bounded in $ W^{1,2}_{loc}(\overline{\mathcal{C}_{k,R}})$. Then by $u_{l,k,R}\rightarrow u_{k,R}$ in $C^{1}_{loc}(\overline{\mathcal{C}_{k,2R}}\setminus((\partial\mathcal{C}_k\cap\partial B_{2R}(0))\cup\{0\}))$, there exists a subsequence, still denoted by $\{{\bf{a}}^l(\nabla u_{l,k,R})\}$, such that
\begin{equation*}
{\bf{a}}^l(\nabla u_{l,k,R})\rightarrow {\bf{a}}(\nabla u_{k,R})\quad\text{in}\quad L^2_{loc}(\overline{\mathcal{C}_{k,R}})\quad\text{and}\quad{\bf{a}}^l(\nabla u_{l,k,R})\rightharpoonup  {\bf{a}}(\nabla u_{k,R})\quad\text{in}\quad W^{1,2}_{loc}(\overline{\mathcal{C}_{k,R}}).
\end{equation*}
As a result, we deduce that $\{{\bf{a}}(\nabla u_{k,R})\}$
is uniformly bounded in $W^{1,2}_{loc}(\overline{\mathcal{C}_{k,R}})$, and
\begin{equation}\label{eq3.13}
    \int_{\mathcal{C}_{k,R}}\left\lvert \nabla {\bf{a}}(\nabla u_{k,R})\right\rvert^2\rho^2\mathrm{d}x\leq
    C\left(\int_{\mathcal{C}_{k,R}}\left\lvert {\bf{a}}(\nabla u_{k,R})\right\rvert^2\left\lvert\nabla\rho \right\rvert^2 \mathrm{d}x+
    \int_{\mathcal{C}_{k,R}}(f(u))^2\rho^2\mathrm{d}x\right).
\end{equation}
And also, by \eqref{al1} and \eqref{al2}, we have
\begin{equation}\label{a1}
\int_{\mathcal{C}_{k,R}\cap B_r}\left\lvert {\bf{a}}(\nabla u_{k,R})\right\rvert^2\mathrm{d}x\leq C\left[r^{-N}\left(\int_{\mathcal{C}_{k,R}\cap (B_{2r}\backslash B_r) }\left\lvert {\bf{a}}(\nabla u_{k,R})\right\rvert\mathrm{d}x\right)^2+r^2\int_{\mathcal{C}_{k,R}\cap B_{2r}}(f(u))^2\mathrm{d}x \right]
\end{equation}
and
\begin{equation}\label{a2}
 \int_{\mathcal{C}_{k,R}\cap B_{\frac{r}{2}}}\left\lvert \nabla {\bf{a}}(\nabla u_{k,R})\right\rvert^2\mathrm{d}x\leq C\left[r^{-N-2}\left(\int_{\mathcal{C}_{k,R}\cap (B_{2r}\backslash B_r) }\left\lvert {\bf{a}}(\nabla u_{k,R})\right\rvert\mathrm{d}x\right)^2+\int_{\mathcal{C}_{k,R}\cap B_{2r}}(f(u))^2\mathrm{d}x \right].
\end{equation}
Since $\{u_{k,R}\}$ are uniformly bounded in $C^{1,\theta}_{loc}(\overline{\mathcal{C}_{k,2R}}\setminus((\partial\mathcal{C}_k\cap\partial B_{2R}(0))\cup\{0\}))\cap C^{0,\theta}_{loc}(\overline{\mathcal{C}_{k,2R}})$ with respect to $k$ and in view of \eqref{lim2}, by passing the limit $k\rightarrow+\infty$ in \eqref{eq3.13}, we deduce
\begin{equation}\label{a+}
    \int_{\mathcal{C}}\left\lvert \nabla {\bf{a}}(\nabla u)\right\rvert^2\rho^2\mathrm{d}x\leq
    C\left(\int_{\mathcal{C}}\left\lvert {\bf{a}}(\nabla u)\right\rvert^2\left\lvert\nabla\rho \right\rvert^2 \mathrm{d}x+
    \int_{\mathcal{C}}(f(u))^2\rho^2\mathrm{d}x\right) .
\end{equation}
Since $\rho \in C_c^{\infty}(B_R(0))$ and $R>1$ is arbitrarily large, \eqref{a+} implies immediately that ${\bf{a}}(\nabla u)\in W^{1,2}_{loc}(\overline{\mathcal{C}})$.
\end{proof}

Let $u\in W^{1,p}_{loc}(\overline{\mathcal{C}})\cap L^{\infty}_{loc}(\overline{\mathcal{C}})$ be a positive solution of \eqref{eq:1.1} and $a,b\in\mathbb{R}$ are constants to be determined, we set
\begin{equation}\label{vUV}
 \mathbf{v}=u^a{\bf{a}}(\nabla u),\,\,\mathbf{U}=\nabla {\bf{a}}(\nabla u)\,\,\mathbf{V}=\nabla \mathbf{v},
\end{equation}
and
\begin{equation}\label{omega}
  \bm{\omega}(x):=\left(\mathbf{v}\nabla\mathbf{v}-\frac{1}{N}\mathbf{v}\operatorname{div}\mathbf{v}\right)u^b.
\end{equation}
Thanks to Lemma \ref{Hanack} and Proposition \ref{regular}, we know that $\mathbf{v}\in W^{1,2}_{loc}(\overline{\mathcal{C}})$ for any $a\in\mathbb{R}$. Moreover, $\mathbf{U}$, $\mathbf{V}$ and $\bm{\omega}(x)$ are well defined in $\overline{\mathcal{C}}$ with $\mathbf{U},\mathbf{V},\bm{\omega}(x)\in L^2_{loc}(\overline{\mathcal{C}})$. By \eqref{homogeneity} and a direct calculation, we have
$$\mathbf{U}\nabla u=\frac{p-1}{p}\nabla(H^p(\nabla u)).$$

By \eqref{homogeneity}, \eqref{eq:1.1} and \eqref{vUV}, together with a simple calculation, we have the following equalities.
\begin{lem}\label{ident}
Let $N\geq2$ and $\mathcal{C}\subseteq\mathbb{R}^{N}$ be an open convex cone. Assume that $u\in W^{1,p}_{loc}(\overline{\mathcal{C}})\cap L^{\infty}_{loc}(\overline{\mathcal{C}})$ is a positive solution of \eqref{eq:1.1}. Then
$${\bf{a}}(\nabla u)\cdot \nabla u=H^p(\nabla u),\,\,\mathbf{v}\cdot\nabla u=u^aH^p(\nabla u),\,\,\mathbf{V}=au^{a-1}\nabla u\otimes {\bf{a}}(\nabla u)+u^a\mathbf{U}.$$
Moreover,
 \begin{equation*}
      \left\{
          \begin{aligned}
          &\operatorname{div}\mathbf{v}=u^{a-1}(aH^p(\nabla u)-uf(u)) \quad\,\,\, &{\rm{in}} \,\, \mathcal{C}, \\
          &\mathbf{v}\cdot \nu =0 \quad\,\,\, &{\rm{on}} \,\, \partial\mathcal{C}.
          \end{aligned}
          \right.
    \end{equation*}
\end{lem}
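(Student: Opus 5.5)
The plan is a direct computation, using the Euler identity \eqref{homogeneity} and the regularity from Proposition \ref{regular}. The only point needing care is that every term is well defined. We have $u>0$ by Lemma \ref{Hanack} and $u\in L^\infty_{loc}$. By local $C^{1,\theta}$ regularity, $u$ and $\nabla u$ are locally bounded. Hence $u^a\in W^{1,\infty}_{loc}$ on compact subsets where $u$ is bounded below. Combined with ${\bf{a}}(\nabla u)\in W^{1,2}_{loc}(\overline{\mathcal{C}})$, this gives $\mathbf{v}\in W^{1,2}_{loc}(\overline{\mathcal{C}})$, and the product rule holds in the Sobolev sense.

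The first identity is immediate from \eqref{homogeneity}:
\[
{\bf{a}}(\nabla u)\cdot\nabla u=H^{p-1}(\nabla u)\langle\nabla H(\nabla u),\nabla u\rangle=H^p(\nabla u).
\]
At points where $\nabla u=0$, both sides vanish, since ${\bf{a}}(0)=0$ by continuity and $p>1$. Multiplying by $u^a$ gives $\mathbf{v}\cdot\nabla u=u^aH^p(\nabla u)$. For $\mathbf{V}$, we apply the product rule componentwise:
\[
\partial_j(u^a a_i(\nabla u))=au^{a-1}\partial_ju\,a_i(\nabla u)+u^a\partial_j(a_i(\nabla u)).
\]
This is $au^{a-1}\nabla u\otimes{\bf{a}}(\nabla u)+u^a\mathbf{U}$, with the index convention of \eqref{vUV}.

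For the divergence, we take the trace of $\mathbf{V}$:
\[
\operatorname{div}\mathbf{v}=au^{a-1}\nabla u\cdot{\bf{a}}(\nabla u)+u^a\operatorname{div}{\bf{a}}(\nabla u).
\]
The equation holds a.e., since ${\bf{a}}(\nabla u)\in W^{1,2}_{loc}$ lets us integrate the weak formulation \eqref{def weak solu} by parts against interior test functions. So $\operatorname{div}{\bf{a}}(\nabla u)=-f(u)$ a.e. in $\mathcal{C}$. Substituting gives
\[
\operatorname{div}\mathbf{v}=u^{a-1}\bigl(aH^p(\nabla u)-uf(u)\bigr).
\]

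The boundary condition is the only delicate step. I would argue in the trace sense. Take a test function $\varphi\in C^1_c(\mathbb{R}^N)$ and use $u^a\varphi$ as the test function in \eqref{def weak solu}; this is admissible because $u$ is locally bounded above and below. Then compare with Lemma \ref{int part} applied to the scalar $u^a\varphi$ and the vector field ${\bf{a}}(\nabla u)$. The volume terms cancel because of the a.e. equation just derived. What remains is
\[
\int_{\partial\mathcal{C}}u^a\varphi\,{\bf{a}}(\nabla u)\cdot\nu\,\mathrm{d}\mathcal{H}^{N-1}=0
\]
for all such $\varphi$. Hence ${\bf{a}}(\nabla u)\cdot\nu=0$ $\mathcal{H}^{N-1}$-a.e. on $\partial\mathcal{C}$, away from the vertex set, which is $\mathcal{H}^{N-1}$-null when $k\le N-2$. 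Since $u^a>0$, it follows that $\mathbf{v}\cdot\nu=u^a\,{\bf{a}}(\nabla u)\cdot\nu=0$ on $\partial\mathcal{C}$.
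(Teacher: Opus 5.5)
Your proposal is correct and is essentially the paper's argument, which consists only of the direct computation from the Euler identity \eqref{homogeneity}, the product rule and $\operatorname{div}{\bf a}(\nabla u)=-f(u)$. The one difference is that the paper reads off $\mathbf{v}\cdot\nu=u^a\,{\bf a}(\nabla u)\cdot\nu=0$ directly from the Neumann condition in \eqref{eq:1.1}, whereas you re-derive it in the trace sense; that is fine, but the bound on $\nabla u$ up to $\partial\mathcal{C}$ you use there does not follow from interior $C^{1,\theta}$ estimates, and you can avoid it by testing with $\varphi$ alone to get ${\bf a}(\nabla u)\cdot\nu=0$ first.
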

Define
$$I(x):=\operatorname{trace}(\mathbf{V}^2)-\frac{1}{N}(\operatorname{div}\mathbf{v})^2,$$
then it follows from $\mathbf{v}\in W^{1,2}_{loc}(\overline{\mathcal{C}})$ that $I(x)\in L^2_{loc}(\overline{\mathcal{C}})$. Moreover, the following lemma implies that $I(x)$ is a nonnegative function, which is useful in our subsequent proofs (the proof follows the same lines as that of Lemma 3.2 in \cite{CHN}).
\begin{lem}\label{I(x)}
Let $N\geq2$, $1<p<N$ and $\mathcal{C}\subseteq\mathbb{R}^{N}$ be an open convex cone and $H\in C^{2}_{\text{loc}}(\mathbb{R}^{N}\setminus\{0\})$ be a Finsler norm such that $H^{2}$ is uniformly convex. Then $I(x)\geq0$ a.e. in $\mathcal{C}$.
\end{lem}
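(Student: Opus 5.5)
Writing $\mathbf{V}=\nabla\mathbf{v}$, the plan is to show that $\mathbf{V}$ is, up to a positive weight, similar to a symmetric matrix. Then $\operatorname{trace}(\mathbf{V}^2)$ is a sum of squares of real eigenvalues, and Cauchy--Schwarz gives $\operatorname{trace}(\mathbf{V}^2)\geq\frac1N(\operatorname{trace}\mathbf{V})^2=\frac1N(\operatorname{div}\mathbf{v})^2$. It suffices to work a.e.\ at points $x\in\mathcal{C}$ where $\nabla u(x)\neq0$, $u$ is twice differentiable in the appropriate sense, and $\mathbf{v}$ is differentiable. On the set $\{\nabla u=0\}$ we have $\mathbf{v}=0$. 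Since $\mathbf{v}\in W^{1,2}_{loc}$ (Proposition \ref{regular}), $\nabla\mathbf{v}=0$ a.e.\ on $\{\mathbf{v}=0\}$ by the standard Stampacchia property of Sobolev functions, so $I=0$ there.

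\textbf{Step 1 (structure of $\mathbf{V}$).} On $\{\nabla u\neq0\}$, interior regularity gives $u\in W^{2,2}_{loc}$ away from the critical set (the equation is uniformly elliptic there, by \eqref{s}--\eqref{l}). The chain rule gives $\mathbf{U}=\nabla{\bf a}(\nabla u)=D{\bf a}(\nabla u)\,\nabla^2u$. Set $A:=D{\bf a}(\nabla u)$. Since ${\bf a}=\nabla_\xi(H^p/p)$, $A$ is the Hessian of $H^p/p$ at $\nabla u$, hence symmetric, and it is positive definite by \eqref{l}. By Lemma \ref{ident},
\[
\mathbf{V}=au^{a-1}\nabla u\otimes{\bf a}(\nabla u)+u^aA\nabla^2u .
\]
By homogeneity, $A\nabla u=(p-1){\bf a}(\nabla u)$, i.e.\ ${\bf a}(\nabla u)=\frac1{p-1}A\nabla u$. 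Writing $g=\nabla u$ and $B=\nabla^2u$, and taking care with the convention for $\nabla\mathbf{v}$ (choosing the transpose consistently; the trace of a square is unaffected), we get
\[
\mathbf{V}=u^{a}\Big(\tfrac{a}{(p-1)u}\,A g\otimes g+AB\Big)=u^aA\Big(\tfrac{a}{(p-1)u}\,g\otimes g+B\Big)=u^aAS,
\]
or the transpose of this, where $S$ is symmetric.

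\textbf{Step 2 (reality of the spectrum and the trace inequality).} With $A$ symmetric positive definite and $S$ symmetric, $AS=A^{1/2}(A^{1/2}SA^{1/2})A^{-1/2}$ is similar to the symmetric matrix $M=A^{1/2}SA^{1/2}$. Hence
\[
\operatorname{trace}(\mathbf{V}^2)=u^{2a}\operatorname{trace}(M^2)=u^{2a}\sum_i\mu_i^2,\qquad \operatorname{div}\mathbf{v}=\operatorname{trace}\mathbf{V}=u^a\sum_i\mu_i .
\]
Cauchy--Schwarz, $(\sum_i\mu_i)^2\le N\sum_i\mu_i^2$, then yields $I(x)\ge0$.

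\textbf{Main obstacle.} The delicate point is not the algebra but the justification at the degenerate set and near $\partial\mathcal{C}$. The statement only needs a.e.\ in the open set $\mathcal{C}$, so boundary issues disappear, and on $\{\nabla u=0\}$ one uses $\nabla\mathbf{v}=0$ a.e. One must also verify that the rank-one term really factors through $A$ via the homogeneity identity $D{\bf a}(\xi)\xi=(p-1){\bf a}(\xi)$, obtained by differentiating ${\bf a}(t\xi)=t^{p-1}{\bf a}(\xi)$. This is where $H\in C^2_{loc}(\mathbb{R}^N\setminus\{0\})$ and the uniform convexity \eqref{uniformly convex} enter, guaranteeing that $A$ is positive definite so that $A^{1/2}$ exists.
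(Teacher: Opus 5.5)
Your proof is correct and follows essentially the same route as the paper, which defers to Lemma 3.2 of \cite{CHN} (the anisotropic version of the Serrin--Zou argument). There, $\mathbf{V}$ is written as $u^a$ times the product of the symmetric positive definite matrix $\nabla^2(H^p/p)(\nabla u)$ with a symmetric matrix, so it has real eigenvalues and Cauchy--Schwarz gives $\operatorname{trace}(\mathbf{V}^2)\geq\frac1N(\operatorname{div}\mathbf{v})^2$; your additional step on the critical set, using $\nabla\mathbf{v}=0$ a.e.\ on $\{\mathbf{v}=0\}$, correctly supplies the a.e.\ justification that the paper leaves implicit.
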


Define
$$\psi(x):=u^{2a+b-1}[Af(u)+\widehat{A}uf^{\prime}(u)]H^p(\nabla u)+Bu^{2a+b-2}H^{2p}(\nabla u)+C\operatorname{div}(u^{2a+b-1}H^p(\nabla u){\bf{a}}(\nabla u)),$$
where
$$A=\left(\frac{1}{N}+\frac{p-1}{p}\right)b,\,\,\widehat{A}=-\frac{N-1}{N},\,\,B=-\frac{N-1}{N}a^2-\frac{p-1}{p}(2a+b-1)b,\,\,C=\frac{N-1}{N}a+\frac{p-1}{p}b.$$

\medskip

The following integral inequality serves as a anisotropic counterpart to Propositions 6.1 and 7.1 in \cite{SZ} for $p\geq2$ and $1<p<2$ respectively, and also extends the integral identity (3.9) in $\mathbb{R}^N$ of \cite{CHN} to general convex cone $\mathcal{C}$. This integral inequality plays a crucial role in the proof of Liouville type theorems for subcritical anisotropic $p$-Laplacian equation in Section 4 and classification of positive solutions for the critical $p$-Laplacian equation in Section 5.
\begin{prop}\label{inte ineq}
Let $N\geq3$, $\mathcal{C}\subseteq\mathbb{R}^{N}$ be an open convex cone and $H\in C^{2}_{\text{loc}}(\mathbb{R}^{N}\setminus\{0\})$ be a Finsler norm such that $H^{2}$ is uniformly convex. Let $u\in  W^{1,p}_{loc}(\overline{\mathcal{C}})\cap L^{\infty}_{loc}(\overline{\mathcal{C}})$ be a positive solution to \eqref{eq:1.1}, and $\bm{\omega}(x)$, $\psi(x)$ are given as above. Then
\begin{equation}\label{key}
 - \int_{\mathcal{C}}\bm{\omega}(x)\cdot\nabla\varphi(x)\mathrm{d}x\geq\int_{\mathcal{C}}(u^bI(x)+\psi(x))\varphi(x)\mathrm{d}x
\end{equation}
holds for any $0\leq\varphi\in C^{\infty}_c(\mathbb{R}^N)$.
\end{prop}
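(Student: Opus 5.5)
The plan is to first establish, in the interior, the pointwise Serrin--Zou type identity
\begin{equation*}
\operatorname{div}\bm{\omega}=u^bI(x)+\psi(x)\qquad\text{a.e. in }\mathcal{C}.
\end{equation*}
We then integrate it against $\varphi\ge0$ and control the boundary term on $\partial\mathcal{C}$, which we expect to have a favourable sign by the convexity of $\mathcal{C}$.

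\emph{Step 1: the interior identity.} Using Lemma \ref{ident}, we expand
\begin{equation*}
\operatorname{div}\big(u^b(\mathbf{v}\nabla\mathbf{v}-\tfrac1N\mathbf{v}\operatorname{div}\mathbf{v})\big).
\end{equation*}
This gives $u^b\big(\operatorname{trace}(\mathbf{V}^2)-\tfrac1N(\operatorname{div}\mathbf{v})^2\big)$, plus terms involving $\mathbf{v}\cdot\nabla(\operatorname{div}\mathbf{v})$, plus the term $bu^{b-1}\nabla u\cdot(\ldots)$. Here $\operatorname{div}\mathbf{v}=u^{a-1}(aH^p(\nabla u)-uf(u))$. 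We simplify with $\mathbf{U}\nabla u=\frac{p-1}{p}\nabla H^p(\nabla u)$ and $\mathbf{a}(\nabla u)\cdot\nabla u=H^p$. We also use the symmetry of $\mathbf{U}$, which comes from $\nabla\mathbf{a}$ being a Hessian. The computation is the same algebra as Proposition 3.1 of \cite{CHN} (equivalently \cite{SZ}), and it produces exactly the coefficients $A,\widehat A,B,C$. The weak justification uses $\mathbf{v}\in W^{1,2}_{loc}(\overline{\mathcal{C}})$ from Proposition \ref{regular} and $u>0$ locally bounded below by Lemma \ref{Hanack}. The divergence term $C\operatorname{div}(u^{2a+b-1}H^p\mathbf{a}(\nabla u))$ is understood distributionally. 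In fact we would rather keep it as $-C\int u^{2a+b-1}H^p\mathbf{a}(\nabla u)\cdot\nabla\varphi$, so that its boundary contribution vanishes by $\mathbf{a}(\nabla u)\cdot\nu=0$.

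\emph{Step 2: integration by parts on the cone.} Lemma \ref{int part} yields
\begin{equation*}
-\int_{\mathcal{C}}\bm{\omega}\cdot\nabla\varphi=\int_{\mathcal{C}}(u^bI+\psi)\varphi-\int_{\partial\mathcal{C}}\varphi\,\bm{\omega}\cdot\nu .
\end{equation*}
The term $\frac1N\mathbf{v}\operatorname{div}\mathbf{v}\cdot\nu$ vanishes since $\mathbf{v}\cdot\nu=0$. It therefore remains to show
\begin{equation*}
\int_{\partial\mathcal{C}}\varphi u^b(\mathbf{v}\nabla\mathbf{v})\cdot\nu\le0 .
\end{equation*}
Formally this is $u^b\,\mathbf{v}\cdot\nabla(\mathbf{v}\cdot\nu)-u^b\,\mathrm{II}(\mathbf{v},\mathbf{v})$. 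The first part is the tangential derivative of $\mathbf{v}\cdot\nu\equiv0$, and the second part is $\le0$ by convexity, exactly as in the proof of Proposition \ref{regular}.

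\emph{Step 3 (main obstacle): making the boundary argument rigorous.} On $\partial\mathcal{C}$ there are no traces of second derivatives, and the boundary is nonsmooth. We therefore would not attempt the boundary computation on $u$ itself. Instead we use the approximations of Proposition \ref{regular}: the regularized solutions $u_{l,k,R}$ on the smooth convex cones $\mathcal{C}_{k,2R}$, with the nondegenerate fields $\mathbf{a}^l$. For these the computation of Steps 1--2 is classical, the boundary term has the right sign, and we obtain the inequality for $u_{l,k,R}$ with $\psi$ and $I$ built from $\mathbf{a}^l$.

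We then pass to the limit, first $l\to\infty$ and then $k\to\infty$. On the left side and in the lower-order terms of $\psi$ we use the $C^1_{loc}$ convergences \eqref{lim2} together with the uniform $W^{1,2}$ bounds from \eqref{eq3.13}--\eqref{a2}. This gives weak $W^{1,2}$ convergence of $\mathbf{v}_{l,k}$ and strong $L^2$ convergence, so every term in the left side is a strong-times-weak product, and these converge.

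For the quadratic term $\int u^bI\varphi$ we expect only weak convergence. The plan is to write
\begin{equation*}
I=|\text{traceless symmetric part of }\mathbf{V}|^2+(\text{antisymmetric/lower-order corrections}).
\end{equation*}
This is the structure behind Lemma \ref{I(x)}, where after the substitution the relevant quantity is a nonnegative quadratic form $J$ in $\nabla\mathbf{a}$ with coefficients converging strongly. We then invoke weak lower semicontinuity of the convex functional $\int J\varphi$ to get
\begin{equation*}
\liminf\int u_{l,k}^bI_{l,k}\varphi\ \ge\ \int u^bI\varphi .
\end{equation*}
This is the inequality direction required by \eqref{key}. Finally we remove the constraint $\operatorname{supp}\varphi\subset B_R$ by taking $R$ large.
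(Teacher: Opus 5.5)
Your toolkit matches the paper's: regularize on the smooth convex cones $\mathcal{C}_k$ with the mollified fields ${\bf a}^l$, sign the boundary term via $\mathbf{v}\cdot\nabla(\mathbf{v}\cdot\nu)=0$ and $\mathrm{II}\ge0$, and pass $l\to\infty$, $k\to\infty$ using lower semicontinuity for the term quadratic in second derivatives. However, two steps fail as you state them.

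\textbf{First gap: $\mathbf{U}$ is not symmetric, so your lower-semicontinuity argument does not go through.} The Hessian is $\nabla_\xi{\bf a}=\nabla^2_\xi(H^p/p)$, not $\mathbf{U}$. Write $A:=\nabla_\xi{\bf a}(\nabla u)$ and $\mathbf{U}_{ij}=\partial_i({\bf a}_j(\nabla u))$. Then $\mathbf{U}=\nabla^2u\,A$, which is symmetric only if $A$ and $\nabla^2u$ commute. For the $p$-Laplacian with $p\neq2$, this requires $\nabla u$ to be an eigenvector of $\nabla^2u$.

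Consequently the antisymmetric part of $\mathbf{V}$ contains the second-order term $u^a\frac12(\mathbf{U}-\mathbf{U}^T)$. Since $I=\bigl|\mathbf{V}^{\rm sym}-\frac1N(\operatorname{trace}\mathbf{V})\mathrm{Id}\bigr|^2-|\mathbf{V}^{\rm anti}|^2$, your decomposition writes $I$ as convex minus convex in $\nabla^2u$. That is not weakly lower semicontinuous, so the $\liminf$ inequality does not follow.

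The nonnegative form that does work is the factorization behind Lemma \ref{I(x)}. By homogeneity ${\bf a}(\nabla u)=\frac1{p-1}A\nabla u$, so $\mathbf{V}=u^aMA$ with $M=\nabla^2u+\frac{a}{(p-1)u}\nabla u\otimes\nabla u$ symmetric. Hence $I=u^{2a}\bigl|S-\frac1N(\operatorname{trace}S)\mathrm{Id}\bigr|^2$ with $S=A^{1/2}MA^{1/2}$.

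For the approximants you only need $\operatorname{trace}(\mathbf{U}_l^2)=|A_l^{1/2}\mathbf{U}_lA_l^{-1/2}|^2$. Here $A_l=\nabla{\bf a}^l(\nabla u_{l,k,R})$ is symmetric because ${\bf a}^l=\nabla(\frac1pH^p\ast\phi_l)$. The rest of $I_l$ is at most linear in $\mathbf{U}_l$, with strongly converging coefficients. The scalar factor $(|\nabla u_{l,k,R}|^2+s_l^2)^{\frac{p-2}{2}}$ cancels in the conjugation, so the conjugating matrices are bounded by \eqref{eq3.5} and converge a.e.\ off the critical set. Since $\mathbf{U}=0$ a.e.\ on $\{\nabla u=0\}$, weak $L^2$ convergence of $\mathbf{U}_l$ then yields the $\liminf$. (The paper's appeal to convexity of $\int\operatorname{trace}(A^2)\psi$ for ``symmetric'' $A$ passes over the same point; this factorization is what justifies it.)

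\textbf{Second gap: the regularized problem does not satisfy the exact identity.} You cannot literally obtain \eqref{key} for $u_{l,k,R}$ ``with $\psi$ and $I$ built from ${\bf a}^l$''. Mollification destroys $(p-1)$-homogeneity, so $\nabla{\bf a}^l(\xi)\xi\neq(p-1){\bf a}^l(\xi)$. The identities of Lemma \ref{ident} and $\mathbf{U}\nabla u=\frac{p-1}{p}\nabla H^p(\nabla u)$, which generate $\psi$, therefore fail for the approximants. In addition, the regularized equation has right-hand side $f(u)$, not $f(u_{l,k,R})$.

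The paper avoids this as follows. It integrates by parts directly on $u$ in every term except $I_2=-\int_{\mathcal{C}}u^{a+b}(\mathbf{v}\nabla{\bf a}(\nabla u))\cdot\nabla\varphi\,\mathrm{d}x$. It approximates only $I_2$, in unsimplified form, where the equation enters only through $\nabla\operatorname{div}{\bf a}^l(\nabla u_{l,k,R})=-f'(u)\nabla u$ and the Neumann condition. Homogeneity is used only after passing to the limit. You can repair your version the same way, or carry the homogeneity-defect terms explicitly and show they vanish as $l\to\infty$.

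\textbf{A smaller omission.} $\mathcal{C}_k$ is smooth only away from its vertex, so your ``classical'' computation needs a cutoff $\chi_\varepsilon$ there. Its error is controlled as in \eqref{Iep}.
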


\begin{proof}
Let
$$I_0:=\int_{\mathcal{C}}u^b\mathbf{v}\cdot\nabla \varphi\operatorname{div}\mathbf{v}\mathrm{d}x.$$
It follows from Lemma \ref{ident} and Proposition \ref{regular} that $u^b\operatorname{div}\mathbf{v}, \mathbf{v}\in W^{1,2}_{loc}(\overline{\mathcal{C}})$, by using Lemma \ref{int part} we have
\begin{equation}\label{I0}
  \begin{aligned}
    I_0&=-\int_{\mathcal{C}}\nabla(u^b\operatorname{div}\mathbf{v})\cdot\mathbf{v}\varphi\mathrm{d}x-\int_{\mathcal{C}}u^b(\operatorname{div}\mathbf{v})^2\varphi\mathrm{d}x+\int_{\partial\mathcal{C}}u^b\operatorname{div}\mathbf{v}(\mathbf{v}\cdot\nu)\varphi\mathrm{d}\mathcal{H}^{N-1}\\
       &=-\int_{\mathcal{C}}u^b(\operatorname{div}\mathbf{v})^2\varphi\mathrm{d}x-b\int_{\mathcal{C}}u^{b-1}\operatorname{div}\mathbf{v}(\nabla u\cdot\mathbf{v})\varphi\mathrm{d}x-\int_{\mathcal{C}}u^b\nabla(\operatorname{div}\mathbf{v})\cdot\mathbf{v}\varphi\mathrm{d}x\\
       &=-\int_{\mathcal{C}}u^b(\operatorname{div}\mathbf{v})^2\varphi\mathrm{d}x-b\int_{\mathcal{C}}u^{b-1}u^{a-1}(aH^p(\nabla u)-uf(u))u^aH^p(\nabla u)\varphi\mathrm{d}x\\
       &-\int_{\mathcal{C}}u^b\nabla(u^{a-1}(aH^p(\nabla u)-uf(u)))\cdot\mathbf{v}\varphi\mathrm{d}x\\
       &=-\int_{\mathcal{C}}u^b(\operatorname{div}\mathbf{v})^2\varphi\mathrm{d}x+\int_{\mathcal{C}}u^{2a+b-1}(A_0f(u)+uf^{\prime}(u))H^p(\nabla u)\varphi\mathrm{d}x\\
       &+B_0\int_{\mathcal{C}}u^{2a+b-2}H^{2p}(\nabla u)\varphi\mathrm{d}x+C_0\int_{\mathcal{C}}u^{a+b}{\bf{a}}(\nabla u)\cdot\nabla(u^{a-1}H^p(\nabla u))\varphi\mathrm{d}x,
  \end{aligned}
\end{equation}
where we have used the fact that $\mathbf{v}\cdot\nu=0$ on $\partial\mathcal{C}$, and
$$A_0=a+b,\,\,B_0=-ab,\,\,C_0=-a.$$
We rewrite $-\int_{\mathcal{C}}u^b(\mathbf{v\nabla\mathbf{v}})\cdot\nabla\varphi\mathrm{d}x$ as follows:
\begin{equation*}
  \begin{aligned}
    -\int_{\mathcal{C}}u^b(\mathbf{v\nabla\mathbf{v}})\cdot\nabla\varphi\mathrm{d}x
    &=-a\int_{\mathcal{C}}u^{a+b-1}(\mathbf{v}\cdot\nabla u){\bf{a}}(\nabla u)\cdot\nabla\varphi\mathrm{d}x-\int_{\mathcal{C}}u^{a+b}(\mathbf{v}\nabla {\bf{a}}(\nabla u))\cdot\nabla\varphi\mathrm{d}x\\
    &=-a\int_{\mathcal{C}}u^{2a+b-1}H^p(\nabla u){\bf{a}}(\nabla u)\cdot\nabla\varphi\mathrm{d}x-\int_{\mathcal{C}}u^{a+b}(\mathbf{v}\nabla {\bf{a}}(\nabla u))\cdot\nabla\varphi\mathrm{d}x\\
    &:=I_1+I_2.
  \end{aligned}
\end{equation*}
For the term $I_1$, Lemma \ref{int part} concludes that
\begin{equation}\label{I1}
  \begin{aligned}
    I_1&=a\int_{\mathcal{C}}\nabla(u^{2a+b-1}H^p(\nabla u))\cdot {\bf{a}}(\nabla u)\varphi\mathrm{d}x+a\int_{\mathcal{C}}u^{2a+b-1}H^p(\nabla u)\operatorname{div}{\bf{a}}(\nabla u)\varphi\mathrm{d}x\\
       &-a\int_{\partial\mathcal{C}}u^{2a+b-1}H^p(\nabla u){\bf{a}}(\nabla u)\cdot\nu\varphi\mathrm{d}\mathcal{H}^{N-1}\\
       &=-a\int_{\mathcal{C}}u^{2a+b-1}H^p(\nabla u)f(u)\varphi\mathrm{d}x+B_1\int_{\mathcal{C}}u^{2a+b-2}H^{2p}(\nabla u)\varphi\mathrm{d}x\\
       &+a\int_{\mathcal{C}}u^{2a+b-1}\nabla(H^p(\nabla u))\cdot {\bf{a}}(\nabla u)\varphi\mathrm{d}x,
  \end{aligned}
\end{equation}
where $B_1=a(2a+b-1)$.

Next, we will prove
\begin{equation}\label{II2}
 I_2\geq  (a+b)\int_{\mathcal{C}}u^{a+b-1}\nabla u\cdot (\mathbf{v}\nabla {\bf{a}}(\nabla u))\varphi\mathrm{d}x
  +\int_{\mathcal{C}}u^{a+b}\left(\operatorname{trace}(\mathbf{V}\nabla {\bf{a}}(\nabla u))-\mathbf{v}\cdot \nabla u f^{\prime}(u)\right) \varphi\mathrm{d}x.
\end{equation}
Note that Proposition \ref{regular} implies that $\nabla {\bf{a}}(\nabla u)\in L^2_{loc}(\overline{\mathcal{C}})$, due to the lackness of regularity, we cannot apply Lemma \ref{int part} directly to deal with the term $I_2$. To this end, we employ the approximation method similar to that in the proof of Proposition \ref{regular}.

Let $R>0$ such that $\text{supp}(\varphi)\subset B_R(0)$, $\mathcal{C}_k$, $\mathcal{C}_{k,R}$, $\Gamma_{k,0}^R$, $\Gamma_{k,1}^R$ and ${\bf{a}}^l$ be as defined in the proof of Proposition \ref{regular}, and $u_{l,k,R}\in W^{1,p}_{loc}(\overline{\mathcal{C}_{k,2R}})$ be a weak solution of
\begin{equation*}
 \left\{
         \begin{aligned}
         & -\operatorname{div}\left({\bf{a}}^l(\nabla u_{l,k,R})\right)=f(u)\quad \, &{\rm{in}}\,\, \mathcal{C}_{k,2R},\\
         &u_{l,k,R}(x)=u(x)\quad \, &{\rm{in}}\ \Gamma_{k,0}^{2R},\\
         &{\bf{a}}^l(\nabla u_{l,k,R})\cdot \nu =0\quad\, &{\rm{on}}\,\,\Gamma_{k,1}^{2R}.
         \end{aligned}
         \right.
\end{equation*}
Then by the proof of Proposition \ref{regular}, we have $u_{l,k,R}\rightarrow u_{k,R}$ in $C^{1}_{loc}(\overline{\mathcal{C}_{k,2R}}\setminus((\partial\mathcal{C}_k\cap\partial B_{2R}(0))\cup\{0\}))$, as $l\rightarrow+\infty$. Moreover, since $\Gamma_{k,1}^{2R}$ is smooth, we have $u_{l,k,R}\in C^2(\overline{\mathcal{C}_{k,2R}}\setminus((\partial\mathcal{C}_k\cap\partial B_{2R}(0))\cup\{0\}))$.

For $h\neq0$, let $h^i:=he_i$ and $\mathbf{U}_{l,k,R,h}$ be the matrix with components
$$\{\mathbf{U}_{l,k,R,h}\}_i^j=\frac{{{a}}^l_j(\nabla u_{l,k,R})(x+h^i)-{{a}}^l_j(\nabla u_{l,k,R})(x)}{h},$$
where $x\in \overline{\mathcal{C}_{k,R}}$ and $|h|$ is chosen to be small enough such that those components are meaningful for a given $x\in\overline{\mathcal{C}_{k,R}}$, the notation ${\bf{a}}^l=({a}^l_1,\cdots,{{a}}^l_N)$ denotes the components of the vector field ${\bf{a}}^l$. Then we have

\begin{equation}\label{U}
  \mathbf{U}_{l,k,R,h}\rightarrow \nabla {\bf{a}}^l(\nabla u_{l,k,R})\,\,\text{in}\,\, L^2_{loc}(\overline{\mathcal{C}_{k,R}}),\quad\,\text{as}\,\,h\rightarrow 0.
\end{equation}

Let $\mathbf{v}_{l,k,R}:=u_{l,k,R}^a{\bf{a}}^l(\nabla u_{l,k,R})$ and $\mathbf{V}_{l,k,R}:=\nabla \mathbf{v}_{l,k,R}$. By Lemma \ref{Hanack}, $u$ is uniformly positive in $\mathcal{C}\cap B_{2R}(0)$, thanks to this, $u_{k,R}$ is nonnegative in $\mathcal{C}_{k,R}$ for $k$ large enough, then by applying Lemma \ref{Hanack} to $u_{k,R}$, we obtain that $u_{k,R}$ is uniformly positive in $\mathcal{C}_{k,R}$ for $k$ large enough, and hence $u_{l,k,R}$ is uniformly positive in $\mathcal{C}_{k,R}$ for $k$, $l$ large enough. In what follows, we always assume that $k$, $l$ large enough so that  $u_{k,R}$ and $u_{l,k,R}$ are uniformly positive in $\mathcal{C}_{k,R}$. In particular, $\mathbf{v}_{l,k,R}\in  W^{1,2}_{loc}(\overline{\mathcal{C}_{k,R}})$ for any $a\in\mathbb{R}$. Choose smooth function $\chi:\mathbb{R}\rightarrow [0,1]$, such that $\chi(t)=0$ for $t\leq \frac{1}{2}$, $\chi(t)=1$ for $t\geq 1$ and $|\chi^{\prime}(t)|\leq C$ for some constant $C>0$. For any fixed $\varepsilon>0$ small enough, consider smooth function $\varphi_{\varepsilon}:=\chi_{\varepsilon}\varphi$, where $\chi_{\varepsilon}(x):=\chi(\frac{|x|}{\varepsilon})$, then $|\nabla \chi_{\varepsilon}|\leq\frac{C}{\varepsilon}$, $0\leq \varphi_{\varepsilon}\leq \varphi$, $\varphi_{\varepsilon}\rightarrow \varphi$ as $\varepsilon\rightarrow0$ and $\varphi_{\varepsilon}=0$ in $B_{\frac{\varepsilon}{2}}(0)$. Define
$$I^{\varepsilon}_{l,k,R}:=-\int_{\mathcal{C}_{k,R}}u_{l,k,R}^{a+b}(\mathbf{v}_{l,k,R}\nabla {\bf{a}}^l(\nabla u_{l,k,R}))\cdot\nabla\varphi_{\varepsilon}\mathrm{d}x,$$
and
$$I^{\varepsilon}_{l,k,R,h}:=-\int_{\mathcal{C}_{k,R}}u_{l,k,R}^{a+b}(\mathbf{v}_{l,k,R}\mathbf{U}_{l,k,R,h})\cdot\nabla\varphi_{\varepsilon}\mathrm{d}x.$$
Then it follows from \eqref{U} that $I^{\varepsilon}_{l,k,R}=\lim\limits_{h\rightarrow0}I^{\varepsilon}_{l,k,R,h}$. By Lemma \ref{int part}, we have
\begin{equation*}
  \begin{aligned}
   I^{\varepsilon}_{l,k,R,h}
    &=\int_{\mathcal{C}_{k,R}}\nabla(u_{l,k,R}^{a+b})\cdot (\mathbf{v}_{l,k,R}\mathbf{U}_{l,k,R,h})\varphi_{\varepsilon}\mathrm{d}x
    +\int_{\mathcal{C}_{k,R}}u_{l,k,R}^{a+b}\operatorname{div} (\mathbf{v}_{l,k,R}\mathbf{U}_{l,k,R,h})\varphi_{\varepsilon}\mathrm{d}x\\
    &-\int_{\Gamma_{k,1}^R}u_{l,k,R}^{a+b}(\mathbf{v}_{l,k,R}\mathbf{U}_{l,k,R,h})\cdot\nu\varphi_{\varepsilon}\mathrm{d}\mathcal{H}^{N-1}\\
    &=(a+b)\int_{\mathcal{C}_{k,R}}u_{l,k,R}^{a+b-1}\nabla u_{l,k,R}\cdot (\mathbf{v}_{l,k,R}\mathbf{U}_{l,k,R,h})\varphi_{\varepsilon}\mathrm{d}x
    +\int_{\mathcal{C}_{k,R}}u_{l,k,R}^{a+b}\operatorname{trace}(\mathbf{V}_{l,k,R}\mathbf{U}_{l,k,R,h})\varphi_{\varepsilon}\mathrm{d}x\\
    &+\int_{\mathcal{C}_{k,R}}u_{l,k,R}^{a+b}\mathbf{v}_{l,k,R}\cdot \operatorname{div}(\mathbf{U}_{l,k,R,h}^T)\varphi_{\varepsilon}\mathrm{d}x
    -\int_{\Gamma_{k,1}^R}u_{l,k,R}^{a+b}(\mathbf{v}_{l,k,R}\mathbf{U}_{l,k,R,h})\cdot\nu\varphi_{\varepsilon}\mathrm{d}\mathcal{H}^{N-1},
  \end{aligned}
\end{equation*}
where we have used the fact that $\varphi_{\varepsilon}=0$ on $\Gamma_{k,0}^R$. By \eqref{U}, we have
\begin{equation*}
 \lim\limits_{h\rightarrow0} \int_{\mathcal{C}_{k,R}}u_{l,k,R}^{a+b-1}\nabla u_{l,k,R}\cdot (\mathbf{v}_{l,k,R}\mathbf{U}_{l,k,R,h})\varphi_{\varepsilon}\mathrm{d}x=\int_{\mathcal{C}_{k,R}}u_{l,k,R}^{a+b-1}\nabla u_{l,k,R}\cdot (\mathbf{v}_{l,k,R}\nabla {\bf{a}}^l(\nabla u_{l,k,R}))\varphi_{\varepsilon}\mathrm{d}x
\end{equation*}
and
\begin{equation*}
 \lim\limits_{h\rightarrow0}\int_{\mathcal{C}_{k,R}}u_{l,k,R}^{a+b}\operatorname{trace}(\mathbf{V}_{l,k,R}\mathbf{U}_{l,k,R,h})\varphi_{\varepsilon}\mathrm{d}x=\int_{\mathcal{C}_{k,R}}u_{l,k,R}^{a+b}\operatorname{trace}(\mathbf{V}_{l,k,R}\nabla {\bf{a}}^l(\nabla u_{l,k,R}))\varphi_{\varepsilon}\mathrm{d}x.
\end{equation*}
Moreover, similar to the proof of formula (6.22) in \cite{SZ} (see also \cite[Formula (3.10)]{CHN} for the anisotropic case), we have
$$\operatorname{div}(\mathbf{U}_{l,k,R,h}^T)\rightarrow-f^{\prime}(u)\nabla u,\quad\text{uniformly on compact subsets of }\mathcal{C}_{k,R}, \quad\text{as}\,\,h\rightarrow0.$$
This gives that
\begin{equation*}
\lim\limits_{h\rightarrow0}\int_{\mathcal{C}_{k,R}}u_{l,k,R}^{a+b}\mathbf{v}_{l,k,R}\cdot \operatorname{div}(\mathbf{U}_{l,k,R,h}^T)\varphi_{\varepsilon}\mathrm{d}x=-\int_{\mathcal{C}_{k,R}}u_{l,k,R}^{a+b}\mathbf{v}_{l,k,R}\cdot \nabla u f^{\prime}(u)\varphi_{\varepsilon}\mathrm{d}x.
\end{equation*}

 Since ${\bf{a}}^l$ is smooth and $u_{l,k,R}\in C^2(\overline{\mathcal{C}_{k,2R}}\setminus((\partial\mathcal{C}_k\cap\partial B_{2R}(0))\cup\{0\}))$, we have $\nabla {\bf{a}}^l(\nabla u_{l,k,R})\in C^0(\overline{\mathcal{C}_{k,2R}}\setminus((\partial\mathcal{C}_k\cap\partial B_{2R}(0))\cup\{0\}))$, this yields that $\mathbf{U}_{l,k,R,h}$ is uniformly bounded in $\overline{\mathcal{C}_{k,R}}\cap (B_{\frac{\varepsilon}{2}}(0))^c$ with respect to $h$ for sufficiently small $h$.
Thus by dominated convergence theorem and a similar calculation in the proof of Proposition \ref{regular} with the test function $\rho^2=u_{l,k,R}^{2a+b}\varphi_{\varepsilon}$, we obtain that
\begin{equation*}
  \begin{aligned}
    \lim\limits_{h\rightarrow0}\int_{\Gamma_{k,1}^R} u_{l,k,R}^{a+b}(\mathbf{v}_{l,k,R}\mathbf{U}_{l,k,R,h})\cdot\nu\varphi_{\varepsilon}\mathrm{d}\mathcal{H}^{N-1}
    &=\int_{\Gamma_{k,1}^R}u_{l,k,R}^{2a+b}\nabla({\bf{a}}^l(\nabla u_{l,k,R})\cdot\nu)\cdot {\bf{a}}^l(\nabla u_{l,k,R})\varphi_{\varepsilon}\mathrm{d}\mathcal{H}^{N-1}\\
    &-\int_{\Gamma_{k,1}^R}u_{l,k,R}^{2a+b}\sum_{i,j=1}^{N}{\bf{a}}^l_j(\nabla u_{l,k,R}){\bf{a}}^l_i(\nabla u_{l,k,R})\partial_i\nu_j\varphi_{\varepsilon}\mathrm{d}\mathcal{H}^{N-1}\\
    &\leq0.
  \end{aligned}
\end{equation*}
Therefore, we deduce that
\begin{equation}\label{Iepi}
\begin{aligned}
  I^{\varepsilon}_{l,k,R}
  &\geq(a+b) \int_{\mathcal{C}_{k,R}}u_{l,k,R}^{a+b-1}\nabla u_{l,k,R}\cdot (\mathbf{v}_{l,k,R}\nabla {\bf{a}}^l(\nabla u_{l,k,R}))\varphi_{\varepsilon}\mathrm{d}x\\
  &+\int_{\mathcal{C}_{k,R}}u_{l,k,R}^{a+b}\operatorname{trace}(\mathbf{V}_{l,k,R}\nabla {\bf{a}}^l(\nabla u_{l,k,R}))\varphi_{\varepsilon}\mathrm{d}x
  -\int_{\mathcal{C}_{k,R}}u_{l,k,R}^{a+b}\mathbf{v}_{l,k,R}\cdot \nabla u f^{\prime}(u)\varphi_{\varepsilon}\mathrm{d}x.
\end{aligned}
\end{equation}
Recall that
\begin{equation*}
  \begin{aligned}
 I^{\varepsilon}_{l,k,R}
 &=-\int_{\mathcal{C}_{k,R}}u_{l,k,R}^{a+b}(\mathbf{v}_{l,k,R}\nabla {\bf{a}}^l(\nabla u_{l,k,R}))\cdot\nabla\varphi_{\varepsilon}\mathrm{d}x\\
 &=-\int_{\mathcal{C}_{k,R}}u_{l,k,R}^{2a+b}({\bf{a}}^l(\nabla u_{l,k,R})\nabla {\bf{a}}^l(\nabla u_{l,k,R}))\cdot\left(\nabla\varphi \chi{\varepsilon}+\nabla\chi_{\varepsilon}\varphi\right) \mathrm{d}x.
  \end{aligned}
\end{equation*}
Since $u_{l,k,R}^{2a+b}\varphi$ and ${\bf{a}}^l(\nabla u_{l,k,R})$ are bounded in $\mathcal{C}_{k,R}$, then H\"older's inequality and $|\nabla \chi_{\varepsilon}|\leq\frac{C}{\varepsilon}$ imply
\begin{equation}\label{Iep}
\begin{aligned}
&\quad \left\lvert \int_{\mathcal{C}_{k,R}}u_{l,k,R}^{2a+b}({\bf{a}}^l(\nabla u_{l,k,R})\nabla {\bf{a}}^l(\nabla u_{l,k,R}))\cdot\nabla\chi_{\varepsilon}\varphi\mathrm{d}x\right\rvert \\
&\leq C\int_{\mathcal{C}_{k,R}}|\nabla {\bf{a}}^l(\nabla u_{l,k,R})||\nabla\chi_{\varepsilon}|\mathrm{d}x\\
&\leq C \varepsilon^{\frac{N-2}{2}}\left(\int_{\mathcal{C}_{k,R}\cap (B_{\varepsilon}\backslash B_{\frac{\varepsilon}{2}})} |\nabla {\bf{a}}^l(\nabla u_{l,k,R})|^2\mathrm{d}x\right)^{\frac{1}{2}}.
\end{aligned}
\end{equation}
By the dominated convergence theorem and \eqref{Iep}, we have
$$ \lim\limits_{\varepsilon\rightarrow0}I^{\varepsilon}_{l,k,R}=-\int_{\mathcal{C}_{k,R}}u_{l,k,R}^{a+b}(\mathbf{v}_{l,k,R}\nabla {\bf{a}}^l(\nabla u_{l,k,R}))\cdot\nabla\varphi\mathrm{d}x\triangleq I_{l,k,R}.$$
Moreover, by the dominated convergence theorem, the three terms on the right-hand side of \eqref{Iepi} converge, thus we arrive at
\begin{equation*}
\begin{aligned}
  I_{l,k,R}
  &\geq(a+b) \int_{\mathcal{C}_{k,R}}u_{l,k,R}^{a+b-1}\nabla u_{l,k,R}\cdot (\mathbf{v}_{l,k,R}\nabla {\bf{a}}^l(\nabla u_{l,k,R}))\varphi\mathrm{d}x\\
  &+\int_{\mathcal{C}_{k,R}}u_{l,k,R}^{a+b}\operatorname{trace}(\mathbf{V}_{l,k,R}\nabla {\bf{a}}^l(\nabla u_{l,k,R}))\varphi\mathrm{d}x
  -\int_{\mathcal{C}_{k,R}}u_{l,k,R}^{a+b}\mathbf{v}_{l,k,R}\cdot \nabla u f^{\prime}(u)\varphi\mathrm{d}x.
\end{aligned}
\end{equation*}
By the proof of Proposition \ref{regular}, there exists a subsequence, still denoted by $\{{\bf{a}}^l(\nabla u_{l,k,R})\}$, such that
\begin{equation*}
{\bf{a}}^l(\nabla u_{l,k,R})\rightarrow {\bf{a}}(\nabla u_{k,R})\quad\text{in}\quad L^2_{loc}(\overline{\mathcal{C}_{k,R}})\quad\text{and}\quad{\bf{a}}^l(\nabla u_{l,k,R})\rightharpoonup  {\bf{a}}(\nabla u_{k,R})\quad\text{in}\quad W^{1,2}_{loc}(\overline{\mathcal{C}_{k,R}}).
\end{equation*}
Thanks to this, we have
\begin{equation}\label{3.26}
\lim\limits_{l\rightarrow\infty}\int_{\mathcal{C}_{k,R}}u_{l,k,R}^{a+b-1}\nabla u_{l,k,R}\cdot (\mathbf{v}_{l,k,R}\nabla {\bf{a}}^l(\nabla u_{l,k,R}))\varphi\mathrm{d}x=\int_{\mathcal{C}_{k,R}}u_{k,R}^{a+b-1}\nabla u_{k,R}\cdot (\mathbf{v}_{k,R}\nabla {\bf{a}}(\nabla u_{k,R}))\varphi\mathrm{d}x,
\end{equation}

\begin{equation}\label{3.27}
\lim\limits_{l\rightarrow\infty}\int_{\mathcal{C}_{k,R}}u_{l,k,R}^{a+b}\mathbf{v}_{l,k,R}\cdot \nabla u f^{\prime}(u)\varphi\mathrm{d}x=\int_{\mathcal{C}_{k,R}}u_{k,R}^{a+b}\mathbf{v}_{k,R}\cdot \nabla u f^{\prime}(u)\varphi\mathrm{d}x,
\end{equation}
and
\begin{equation}\label{3.28}
\begin{aligned}
  &\lim\limits_{l\rightarrow\infty}\int_{\mathcal{C}_{k,R}} u_{l,k,R}^{2a+b-1}\operatorname{trace}(\nabla u_{l,k,R}\otimes {\bf{a}}^l(\nabla u_{l,k,R})\nabla {\bf{a}}^l(\nabla u_{l,k,R}))\varphi\mathrm{d}x\\
  &=\int_{\mathcal{C}_{k,R}} u_{k,R}^{2a+b-1}\operatorname{trace}(\nabla u_{k,R}\otimes {\bf{a}}(\nabla u_{k,R})\nabla {\bf{a}}(\nabla u_{k,R}))\varphi\mathrm{d}x,
\end{aligned}
\end{equation}
where $\mathbf{v}_{k,R}=u_{k,R}^a {\bf{a}}(\nabla u_{k,R})$. Since $\{\nabla {\bf{a}}^l(\nabla u_{l,k,R})\}$ is uniformly bounded in $L^2_{loc}(\overline{\mathcal{C}_{k,R}})$, we know that  $\{\operatorname{trace}(\nabla {\bf{a}}^l(\nabla u_{l,k,R})\nabla {\bf{a}}^l(\nabla u_{l,k,R}))\}$ is uniformly bounded in $L^1_{loc}(\overline{\mathcal{C}_{k,R}})$, and it follows from \eqref{al2} that $\{\operatorname{trace}(\nabla {\bf{a}}^l(\nabla u_{l,k,R})\nabla {\bf{a}}^l(\nabla u_{l,k,R}))\}$ is equi-integrable in $L^1_{loc}(\overline{\mathcal{C}_{k,R}})$ in view of $N\geq 3$. Then by Theorem 4.30 of \cite{Brezis}, there exists a subsequence of $\{\operatorname{trace}(\nabla {\bf{a}}^l(\nabla u_{l,k,R})\nabla {\bf{a}}^l(\nabla u_{l,k,R}))\}$ (still denoted by $\{\operatorname{trace}(\nabla {\bf{a}}^l(\nabla u_{l,k,R})\nabla {\bf{a}}^l(\nabla u_{l,k,R}))\}$) and a function $g\in L^1_{loc}(\overline{\mathcal{C}_{k,R}})$, such that $\operatorname{trace}(\nabla {\bf{a}}^l(\nabla u_{l,k,R})\nabla {\bf{a}}^l(\nabla u_{l,k,R}))\rightharpoonup g$ in $L^1_{loc}(\overline{\mathcal{C}_{k,R}})$. For any fixed nonnegative test function $\psi\in C^{\infty}_c(\mathbb{R}^N)$, we define
$$\mathcal{J}(A):=\int_{\mathcal{C}_{k,R}}\operatorname{trace}(A(x))^2\psi\mathrm{d}x,$$
where $A(x)\in L^2_{loc}(\overline{\mathcal{C}_{k,R}})$ is a symmetric matrix function, and hence $\mathcal{J}$ is a convex functional. By $\nabla{\bf{a}}^l(\nabla u_{l,k,R})\rightharpoonup  \nabla{\bf{a}}(\nabla u_{k,R})$ in $L^2_{loc}(\overline{\mathcal{C}_{k,R}})$ and the weakly lower semi-continuity of $\mathcal{J}$, we obtain
\begin{equation}\label{3.29}
\begin{aligned}
  \liminf\limits_{l\rightarrow\infty}\mathcal{J}(\nabla {\bf{a}}^l(\nabla u_{l,k,R}))
  &=\liminf\limits_{l\rightarrow\infty}\int_{\mathcal{C}_{k,R}}\operatorname{trace}(\nabla {\bf{a}}^l(\nabla u_{l,k,R})\nabla {\bf{a}}^l(\nabla u_{l,k,R}))\psi\mathrm{d}x\\
  &\geq \mathcal{J}(\nabla {\bf{a}}(\nabla u_{k,R})) \\
  &=\int_{\mathcal{C}_{k,R}}\operatorname{trace}(\nabla {\bf{a}}(\nabla u_{k,R})\nabla {\bf{a}}(\nabla u_{k,R}))\psi\mathrm{d}x.
\end{aligned}
\end{equation}
On the other hand, $\operatorname{trace}(\nabla {\bf{a}}^l(\nabla u_{l,k,R})\nabla {\bf{a}}^l(\nabla u_{l,k,R}))\rightharpoonup g$ in $L^1_{loc}(\overline{\mathcal{C}_{k,R}})$ gives that
\begin{equation}\label{3.30}
\lim\limits_{l\rightarrow\infty}\int_{\mathcal{C}_{k,R}}\operatorname{trace}(\nabla {\bf{a}}^l(\nabla u_{l,k,R})\nabla {\bf{a}}^l(\nabla u_{l,k,R}))\psi\mathrm{d}x=\int_{\mathcal{C}_{k,R}}g\psi\mathrm{d}x.
\end{equation}
By the arbitrariness of $\psi$, we deduce from \eqref{3.29} and \eqref{3.30} that
\begin{equation}\label{3.31}
  g\geq \operatorname{trace}(\nabla {\bf{a}}(\nabla u_{k,R})\nabla {\bf{a}}(\nabla u_{k,R}))\quad\text{a.e.}\quad\text{in}\quad \mathcal{C}_{k,R}.
\end{equation}
Thanks to \eqref{3.31} and $u_{l,k,R}\rightarrow u_{k,R}$ in $C^{1}_{loc}(\overline{\mathcal{C}_{k,2R}}\setminus((\partial\mathcal{C}_k\cap\partial B_{2R}(0))\cup\{0\}))$, we obtain
\begin{equation}\label{3.32}
\begin{aligned}
  &\quad \lim\limits_{l\rightarrow\infty}\int_{\mathcal{C}_{k,R}}u_{l,k,R}^{2a+b}\operatorname{trace}(\nabla {\bf{a}}^l(\nabla u_{l,k,R})\nabla {\bf{a}}^l(\nabla u_{l,k,R}))\varphi\mathrm{d}x \\ &
  =\int_{\mathcal{C}_{k,R}}u_{k,R}^{2a+b}g\varphi\mathrm{d}x\\
  &\geq \int_{\mathcal{C}_{k,R}}u_{k,R}^{2a+b}\operatorname{trace}(\nabla {\bf{a}}(\nabla u_{k,R})\nabla {\bf{a}}(\nabla u_{k,R}))\varphi\mathrm{d}x.
\end{aligned}
\end{equation}
By \eqref{3.28} and \eqref{3.32}, we have
\begin{equation}\label{3.33}
  \lim\limits_{l\rightarrow\infty}\int_{\mathcal{C}_{k,R}}u_{l,k,R}^{a+b}\operatorname{trace}(\mathbf{V}_{l,k,R}\nabla {\bf{a}}^l(\nabla u_{l,k,R}))\varphi\mathrm{d}x\geq\int_{\mathcal{C}_{k,R}}u_{k,R}^{a+b}\operatorname{trace}(\mathbf{V}_{k,R}\nabla {\bf{a}}(\nabla u_{k,R}))\varphi\mathrm{d}x,
\end{equation}
where $\mathbf{V}_{k,R}=\nabla \mathbf{v}_{k,R}$. Combining \eqref{3.26} with \eqref{3.27} and \eqref{3.33}, we obtain
\begin{equation*}
 \begin{aligned}
  \lim\limits_{l\rightarrow\infty}I_{l,k,R}
 &\geq \int_{\mathcal{C}_{k,R}}u_{k,R}^{a+b-1}\nabla u_{k,R}\cdot (\mathbf{v}_{k,R}\nabla {\bf{a}}(\nabla u_{k,R}))\varphi\mathrm{d}x+\int_{\mathcal{C}_{k,R}}u_{k,R}^{a+b}\operatorname{trace}(\mathbf{V}_{k,R}\nabla {\bf{a}}(\nabla u_{k,R}))\varphi\mathrm{d}x\\
 &-\int_{\mathcal{C}_{k,R}}u_{k,R}^{a+b}\mathbf{v}_{k,R}\cdot \nabla u f^{\prime}(u)\varphi\mathrm{d}x\\
 &:=I_{k,R}.
 \end{aligned}
\end{equation*}
By \eqref{a1}, \eqref{a2} and a similar derivation applied to $I_{k,R}$, we obtain
\begin{equation*}
\begin{aligned}
\lim\limits_{k\rightarrow\infty}I_{k,R}
\geq & (a+b)\int_{\mathcal{C}}u^{a+b-1}\nabla u\cdot \Big(\mathbf{v}\nabla \big({\bf{a}}(\nabla u)\big)\Big)\varphi\mathrm{d}x \\
& +\int_{\mathcal{C}}u^{a+b}\left(\operatorname{trace}\Big(\mathbf{V}\nabla \big( {\bf{a}}(\nabla u)\big)\Big)-\mathbf{v}\cdot \nabla u f^{\prime}(u)\right) \varphi\mathrm{d}x.
\end{aligned}
\end{equation*}
As a consequence, we obtain the validity of \eqref{II2}, since $I_2=\lim\limits_{k\rightarrow\infty}\lim\limits_{l\rightarrow\infty}I_{l,k,R}$. By direct calculations, we have
\begin{equation*}
    (a+b)u^{a+b-1}\nabla u\cdot(u^a{\bf{a}}(\nabla u)\mathbf{U})=\frac{p-1}{p}(a+b)u^{2a+b-1}{\bf{a}}(\nabla u)\cdot\nabla (H^p(\nabla u)),
\end{equation*}
and
\begin{equation*}
  \begin{aligned}
  \operatorname{trace}(\mathbf{V}\nabla {\bf{a}}(\nabla u))&=u^{-a}\operatorname{trace}(\mathbf{V}^2)-au^{-1}{\bf{a}}(\nabla u)\cdot(\mathbf{V}\nabla u)\\
    &=u^{-a}\operatorname{trace}(\mathbf{V}^2)-a^2u^{a-2}H^{2p}(\nabla u)-\frac{p-1}{p}au^{a-1}{\bf{a}}(\nabla u)\cdot\nabla(H^p(\nabla u)),
  \end{aligned}
\end{equation*}
where we have used the fact that  $\mathbf{U}\nabla u=\frac{p-1}{p}\nabla(H^p(\nabla u))$. Thus we deduce that
\begin{equation}\label{I2}
  \begin{aligned}
    I_2&\geq\int_{\mathcal{C}}u^b\operatorname{trace}(\mathbf{V}^2)\varphi\mathrm{d}x-\int_{\mathcal{C}}u^{2a+b}f^{\prime}(u)H^{p}(\nabla u)\varphi\mathrm{d}x\\
    &-a^2\int_{\mathcal{C}}u^{2a+b-2}H^{2p}(\nabla u)\varphi\mathrm{d}x+\frac{p-1}{p}b\int_{\mathcal{C}}u^{2a+b-1}{\bf{a}}(\nabla u)\cdot\nabla (H^{p}(\nabla u))\mathrm{d}x.
  \end{aligned}
\end{equation}
Since
\begin{equation*}
  \begin{aligned}
    \operatorname{div}(u^{2a+b-1}H^p(\nabla u){\bf{a}}(\nabla u))
    &=u^{a+b}{\bf{a}}(\nabla u)\cdot\nabla (u^{a-1}H^p(\nabla u))-u^{2a+b-1}f(u)H^p(\nabla u)\\
    &+(a+b)u^{2a+b-2}H^{2p}(\nabla u),
  \end{aligned}
\end{equation*}
and
\begin{equation*}
  \begin{aligned}
    \operatorname{div}(u^{2a+b-1}H^p(\nabla u){\bf{a}}(\nabla u))
    &=u^{2a+b-1}{\bf{a}}(\nabla u)\cdot\nabla (H^p(\nabla u))-u^{2a+b-1}f(u)H^p(\nabla u)\\
    &+(2a+b-1)u^{2a+b-2}H^{2p}(\nabla u),
  \end{aligned}
\end{equation*}
and noting that
$$ - \int_{\mathcal{C}}\bm{\omega}(x)\cdot\nabla\varphi(x)\mathrm{d}x=\frac{I_0}{N}+I_1+I_2,$$
combining with \eqref{I0}, \eqref{I1} and \eqref{I2}, we obtain the validity of \eqref{inte ineq}. Thus we complete the proof of Proposition \ref{inte ineq}.
\end{proof}

\section{The Liouville's type theorem in convex cones}
In this subsection, we assume that $f\in C^{1}[0,+\infty)$ is subcritical with $f(t)>0$ for all $t>0$. Let
$$a=-\frac{p-1}{p}p^*,\qquad b=\frac{N-1}{N}p^*-d,$$
where $d$ is a positive parameter to be determined and will be chosen small enough. Thanks to Lemma \ref{I(x)} and Proposition \ref{inte ineq}, we have the following integral inequality in convex cones, which is useful in the proof of Liouville type theorem for subcritical anisotropic Finsler $p$-Laplacian equation in convex cones.
\begin{prop}\label{prop4.1}
Let $N\geq3$, $\mathcal{C}\subseteq\mathbb{R}^{N}$ be an open convex cone and $H\in C^{2}_{\text{loc}}(\mathbb{R}^{N}\setminus\{0\})$ is a Finsler norm such that $H^{2}$ is uniformly convex. Suppose that $1<p<N$ and $u$ be a positive weak solution of \eqref{eq:1.1}. Then for any test function $0\leq\varphi\in C^{\infty}_{c}(\mathbb{R}^N)$ and $d\in\mathbb{R}$, there holds
\begin{equation}\label{ineq4.1}
  \begin{aligned}
    &\quad \int_{\mathcal{C}}u^{1-p_*-d}\left\{Af(u)+\widehat{A}uf^{\prime}(u)\right\}H^p(\nabla u)\varphi\mathrm{d}x+B\int_{\mathcal{C}}u^{-p_*-d}H^{2p}(\nabla u)\varphi\mathrm{d}x\\
    &\leq \int_{\mathcal{C}}u^{1-p_*-d}\left\{Duf(u)+\widehat{D}H^{p}(\nabla u)\right\}{\bf{a}}(\nabla u)\cdot\nabla\varphi\mathrm{d}x+ \int_{\mathcal{C}}u^{2-p_*-d}{\bf{a}}(\nabla u)\cdot\nabla^2\varphi {\bf{a}}(\nabla u)\mathrm{d}x,
  \end{aligned}
\end{equation}
where $\nabla^2\varphi$ is the Hessian of $\varphi$ and
\begin{equation*}
  \begin{aligned}
    &A=\frac{N-1}{N}(1-\frac{d}{p_*})(p^*-1),\qquad\widehat{A}=-\frac{N-1}{N},\qquad B=\frac{p-1}{p}(1-d)d,\\
    &D=-\frac{N+1}{N},\qquad\widehat{D}=\left(2-\frac{p_*}{p}\right)-\left(2-\frac{1}{p}\right)d.
  \end{aligned}
\end{equation*}
\end{prop}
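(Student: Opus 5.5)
The plan is to specialize Proposition \ref{inte ineq} to the exponents fixed at the start of this section,
$$a=-\tfrac{p-1}{p}p^*,\qquad b=\tfrac{N-1}{N}p^*-d .$$
We then discard the term $\int_{\mathcal{C}}u^bI(x)\varphi\,\mathrm{d}x$, which is nonnegative by Lemma \ref{I(x)} since $\varphi\geq0$. Finally we rewrite both the left-hand side $-\int_{\mathcal{C}}\bm{\omega}\cdot\nabla\varphi$ and the divergence part of $\psi$ so that no derivative of ${\bf{a}}(\nabla u)$ remains. With these choices one should have $2a+b=2-p_*-d$, i.e.\ $2a+b-1=1-p_*-d$. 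I will first check this exponent identity and fix what $p_*$ stands for in the statement, since the bookkeeping of all constants depends on it. The expressions for $A$, $\widehat A$, $B$ in the statement should then follow directly from the formulas defining $\psi$, after a routine simplification.

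The main computation concerns the term $-\int_{\mathcal{C}}u^b(\mathbf{v}\nabla\mathbf{v})\cdot\nabla\varphi$. Componentwise, with ${\bf{a}}={\bf{a}}(\nabla u)$,
$$u^b v_i\partial_i v_j=\partial_i\big(u^{2a+b}a_ia_j\big)-u^{a}a_j\,\partial_i\big(u^{a+b}a_i\big),$$
and $\partial_i(u^{a+b}a_i)=(a+b)u^{a+b-1}H^p(\nabla u)-u^{a+b}f(u)$ by the equation and Lemma \ref{ident}. For the total-derivative term I apply Lemma \ref{int part}, with $\partial_j\varphi$ playing the role of the test function. This gives $\int_{\mathcal{C}}u^{2a+b}{\bf{a}}\cdot\nabla^2\varphi\,{\bf{a}}$. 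The boundary term is $\int_{\partial\mathcal{C}}u^{2a+b}({\bf{a}}\cdot\nabla\varphi)({\bf{a}}\cdot\nu)$, and it vanishes by the Neumann condition. The remainder produces terms $u^{2a+b-1}\{(a+b)H^p-uf\}\,{\bf{a}}\cdot\nabla\varphi$.

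The term $\frac1N\int_{\mathcal{C}}u^b(\mathbf{v}\cdot\nabla\varphi)\operatorname{div}\mathbf{v}$ needs no integration by parts. Substituting $\operatorname{div}\mathbf{v}=u^{a-1}(aH^p-uf)$ turns it directly into terms of the form $u^{2a+b-1}\{\cdot\}\,{\bf{a}}\cdot\nabla\varphi$. In $\int\psi\varphi$, the piece $C\int_{\mathcal{C}}\operatorname{div}(u^{2a+b-1}H^p{\bf{a}})\varphi$ is integrated by parts once more via Lemma \ref{int part}. Again the boundary term dies since ${\bf{a}}\cdot\nu=0$, leaving $-C\int_{\mathcal{C}} u^{2a+b-1}H^p\,{\bf{a}}\cdot\nabla\varphi$. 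Moving everything to the right-hand side and collecting coefficients gives $D$ as the coefficient of $uf$ and $\widehat D$ as the coefficient of $H^p$. In $D$ the contributions $+1$ (first term), $-\frac1N$ (divergence term) and the sign convention should produce $-\frac{N+1}{N}$. In $\widehat D$, the contributions are $-(a+b)$, $\frac aN$ and $+C=\frac{N-1}{N}a+\frac{p-1}{p}b$; I expect their sum to reduce to $(2-\frac{p_*}{p})-(2-\frac1p)d$ once $a,b$ are inserted.

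The main obstacle is not algebraic but the justification of these integrations by parts. They are legitimate because $u^{2a+b}$ and ${\bf{a}}(\nabla u)\in W^{1,2}_{loc}(\overline{\mathcal{C}})$ by Proposition \ref{regular}, and because $u$ is locally bounded and bounded away from $0$ by Lemma \ref{Hanack}. The point is that the final inequality contains only ${\bf{a}}(\nabla u)$, $H^p(\nabla u)$, $u$ and $f$, never $\nabla{\bf{a}}$. Hence the traces on $\partial\mathcal{C}$ make sense, and the Neumann condition kills every boundary term, exactly as in the derivation of \eqref{I0} and \eqref{I1}.
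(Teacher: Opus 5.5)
Your plan is the paper's proof. You specialize Proposition \ref{inte ineq} with $a=-\frac{p-1}{p}p^*$, $b=\frac{N-1}{N}p^*-d$ and drop $\int_{\mathcal{C}}u^bI\varphi\ge 0$. You then integrate by parts once the part of $\bm\omega$ that contains $\nabla{\bf a}(\nabla u)$. Your splitting $u^bv_i\partial_iv_j=\partial_i(u^{2a+b}a_ia_j)-u^aa_j\partial_i(u^{a+b}a_i)$ is only a regrouping of the paper's expansion $\bm\omega=u^{2a+b}{\bf a}\nabla{\bf a}+\frac{(N-1)a}{N}u^{2a+b-1}H^p{\bf a}+\frac1N u^{2a+b}f{\bf a}$. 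Finally you integrate the $C$-divergence term of $\psi$ by parts, and every boundary term vanishes by ${\bf a}(\nabla u)\cdot\nu=0$. On the notation: $p_*$ must be $\frac{(N-1)p}{N-p}=\frac{N-1}{N}p^*$. Then $b=p_*-d$ and $p_*-1=\frac{N(p-1)}{N-p}$, consistent with Lemma \ref{Hanack}, and $2a+b=2-p_*-d$ does hold.

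There is one sign slip in your final tally. Your own display shows that the first term contributes $+\,u^{2a+b-1}\{(a+b)H^p-uf\}\,{\bf a}\cdot\nabla\varphi$ to $-\int_{\mathcal{C}}\bm\omega\cdot\nabla\varphi$. So its contributions are $-1$ (not $+1$) to $D$ and $+(a+b)$ (not $-(a+b)$) to $\widehat D$. With the signs you listed you would get $D=\frac{N-1}{N}$ and $\widehat D=-\frac{b}{p}$. With the correct signs, $D=-1-\frac1N=-\frac{N+1}{N}$. Likewise $\widehat D=\frac{N+1}{N}a+b+C=2a+\frac{2p-1}{p}b$, which equals $(2-\frac{p_*}{p})-(2-\frac1p)d$ because $p\,p_*-(p-1)p^*=p$.
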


\begin{proof}
 By Lemma \ref{ident} and \eqref{omega}, we have
 $$\bm{\omega}=u^{2a+b}{\bf{a}}(\nabla u)\cdot\nabla \big({\bf{a}}(\nabla u)\big)+\frac{(N-1)a}{N}u^{2a+b-1}H^p(\nabla u){\bf{a}}(\nabla u)+\frac{1}{N}u^{2a+b}f(u){\bf{a}}(\nabla u),$$
and
\begin{equation*}
  \begin{aligned}
    &\quad -\int_{\mathcal{C}}u^{2a+b}\Big(({\bf{a}}(\nabla u)\cdot\nabla \big( {\bf{a}}(\nabla u)\big)\Big)\nabla\varphi\mathrm{d}x \\
    &=\int_{\mathcal{C}}({\bf{a}}(\nabla u)\cdot\nabla(u^{2a+b})){\bf{a}}(\nabla u)\cdot\nabla\varphi\mathrm{d}x\\
    &+\int_{\mathcal{C}}u^{2a+b}\left(\operatorname{div}\big({\bf{a}}(\nabla u)\big)({\bf{a}}(\nabla u)\cdot\nabla\varphi)+{\bf{a}}(\nabla u)\cdot\big(\nabla^2\varphi {\bf{a}}(\nabla u)\big)\right) \mathrm{d}x\\
    &+\int_{\partial\mathcal{C}}u^{2a+b}({\bf{a}}(\nabla u)\cdot\nabla\varphi)({\bf{a}}(\nabla u)\cdot\nu)\mathrm{d}\mathcal{H}^{N-1}\\
    &=(2a+b)\int_{\mathcal{C}}u^{2a+b-1}H^p(\nabla u){\bf{a}}(\nabla u)\cdot\nabla\varphi\mathrm{d}x\\
    &-\int_{\mathcal{C}}u^{2a+b}f( u){\bf{a}}(\nabla u)\cdot\nabla\varphi\mathrm{d}x+\int_{\mathcal{C}}u^{2a+b}{\bf{a}}(\nabla u)\cdot\big(\nabla^2\varphi {\bf{a}}(\nabla u)\big)\mathrm{d}x,
  \end{aligned}
\end{equation*}
where we have used the boundary condition ${\bf{a}}(\nabla u)\cdot\nu=0$ on $\partial\mathcal{C}$. Thus we derive that
\begin{equation*}
  \begin{aligned}
    - \int_{\mathcal{C}}\bm{\omega}(x)\cdot\nabla\varphi(x)\mathrm{d}x
    &=\left(\frac{N+1}{N}a+b\right)\int_{\mathcal{C}}u^{2a+b-1}H^p(\nabla u){\bf{a}}(\nabla u)\cdot\nabla\varphi\mathrm{d}x\\
    &-\frac{N+1}{N}\int_{\mathcal{C}}u^{2a+b}f( u){\bf{a}}(\nabla u)\cdot\nabla\varphi\mathrm{d}x+\int_{\mathcal{C}}u^{2a+b}{\bf{a}}(\nabla u)\cdot\big(\nabla^2\varphi {\bf{a}}(\nabla u)\big)\mathrm{d}x.
  \end{aligned}
\end{equation*}
It follows from Lemma \ref{I(x)} and Proposition \ref{key} that
$$- \int_{\mathcal{C}}\bm{\omega}(x)\cdot\nabla\varphi(x)\mathrm{d}x\geq\int_{\mathcal{C}}\psi(x)\varphi(x)\mathrm{d}x.$$
By using the boundary condition ${\bf{a}}(\nabla u)\cdot\nu=0$ on $\partial\mathcal{C}$ again, we conclude
\begin{equation*}
  \begin{aligned}
    \int_{\mathcal{C}}\operatorname{div}(u^{2a+b-1}H^p(\nabla u){\bf{a}}(\nabla u))\varphi\mathrm{d}x
    &=-\int_{\mathcal{C}}u^{2a+b-1}H^p(\nabla u){\bf{a}}(\nabla u)\cdot\nabla\varphi\mathrm{d}x\\
    &+\int_{\partial\mathcal{C}}u^{2a+b-1}H^p(\nabla u)({\bf{a}}(\nabla u)\cdot\nu)\varphi\mathrm{d}\mathcal{H}^{N-1}\\
    &=-\int_{\mathcal{C}}u^{2a+b-1}H^p(\nabla u){\bf{a}}(\nabla u)\cdot\nabla\varphi\mathrm{d}x.
  \end{aligned}
\end{equation*}
Thus we obtain the validity of \eqref{ineq4.1}.
\end{proof}
In what following,we  let $B_R=B_{R}(x_0)$ be the ball with radius $R$ and center $x_0$, such that $B_R\cap\mathcal{C}\neq\varnothing $.
\begin{lem}\label{lem4.2}
Let $N\geq3$, $\mathcal{C}\subseteq\mathbb{R}^{N}$ be an open convex cone and $H\in C^{2}_{\text{loc}}(\mathbb{R}^{N}\setminus\{0\})$ be a Finsler norm such that $H^{2}$ is uniformly convex. Let $u$ be a positive solution of \eqref{eq:1.1}. Suppose that
 $$R>0,\,\,d\in(0,1),\,\,k>2p$$
 and
 \begin{equation}\label{d}
  1<\alpha<p^*-\frac{p^*-1}{p_*}d.
 \end{equation}
 Then there exists a positive constant $C(n,p,d,k,\alpha)$ such that
 \begin{equation}\label{eq4.2}
  \int_{\mathcal{C}}\eta^kf^2(u)u^{2-d-p_*}\mathrm{d}x\leq CR^{-2p}\int_{\mathcal{C}}u^{\sigma-d}\eta^{k-2p}\mathrm{d}x,
 \end{equation}
 where $\sigma=2p-p_*$, and $\eta=\eta(\frac{|x-x_0|}{R})$ is a cut off function on the ball $B_{2R}$.
\end{lem}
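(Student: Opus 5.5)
The estimate \eqref{eq4.2} will be obtained from Proposition \ref{prop4.1} with the test function $\varphi=\eta^k$. The boundary condition ${\bf a}(\nabla u)\cdot\nu=0$ has already been absorbed into Proposition \ref{prop4.1}, so the argument from here on is the same as in $\mathbb{R}^N$: it is the computation of Serrin--Zou \cite{SZ} (Section 6), adapted to the anisotropic setting as in \cite{CHN}, with every integral taken over $\mathcal{C}$.

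\textbf{Step 1: coercivity of the left-hand side.} Put $\varphi=\eta^k$ in \eqref{ineq4.1}, with $d\in(0,1)$ fixed. Then $B=\frac{p-1}{p}(1-d)d>0$. Using the subcriticality condition $uf'(u)\le(\alpha-1)f(u)$ and $\widehat A<0$, we get
$$Af+\widehat A uf'\ \ge\ \frac{N-1}{N}\Big[(1-\tfrac{d}{p_*})(p^*-1)-(\alpha-1)\Big]f(u).$$
Condition \eqref{d} is exactly the statement that the bracket is positive. Hence the left-hand side of \eqref{ineq4.1} is bounded below by
$$c\int_{\mathcal{C}}\eta^k\Big(u^{1-p_*-d}f(u)H^p(\nabla u)+u^{-p_*-d}H^{2p}(\nabla u)\Big)\,dx$$
for some $c>0$.

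\textbf{Step 2: absorbing the right-hand side.} We use $|\nabla\eta^k|\le CR^{-1}\eta^{k-1}$, $|\nabla^2\eta^k|\le CR^{-2}\eta^{k-2}$ and $|{\bf a}(\nabla u)|\le CH^{p-1}(\nabla u)$.
\begin{itemize}
\item The term $u^{1-p_*-d}\widehat D H^{2p-1}R^{-1}\eta^{k-1}$ is split by Young's inequality into $\epsilon\,u^{-p_*-d}H^{2p}\eta^k$ plus $C\,R^{-2p}u^{\sigma-d}\eta^{k-2p}$. The powers check out: $u^{(1-p_*-d)2p-(-p_*-d)(2p-1)}=u^{2p-p_*-d}$.
\item The Hessian term $u^{2-p_*-d}H^{2p-2}R^{-2}\eta^{k-2}$ is handled the same way with exponents $\frac{p}{p-1}$ and $p$, giving the same remainder $R^{-2p}u^{\sigma-d}\eta^{k-2p}$.
\item The term $u^{2-p_*-d}f(u)H^{p-1}R^{-1}\eta^{k-1}$ is split by Young into three pieces: $\epsilon\,u^{1-p_*-d}fH^p\eta^k$, $\epsilon'\,\eta^kf^2u^{2-d-p_*}$, and the same $R^{-2p}$ remainder.
\end{itemize}
The requirement $k>2p$ guarantees that every leftover power of $\eta$ is nonnegative.

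\textbf{Step 3: bringing in the $f^2$ term.} The target quantity $\int\eta^kf^2u^{2-d-p_*}$ does not appear on the left-hand side of \eqref{ineq4.1}. I therefore need a second identity, obtained by testing the equation itself with $\phi=\eta^k f(u)u^{2-d-p_*}$. Since $u>0$ is locally bounded below by Lemma \ref{Hanack}, this is an admissible test function in \eqref{def weak solu}; no boundary term arises because of the Neumann condition. This gives
$$\int\eta^kf^2u^{2-d-p_*}=\int {\bf a}(\nabla u)\cdot\nabla\big(\eta^kf u^{2-d-p_*}\big).$$
Expanding the right-hand side, the derivative falling on $f$ produces $f'u^{2-d-p_*}H^p$, which by subcriticality is at most $(\alpha-1)fu^{1-d-p_*}H^p$. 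The derivative on $u^{2-d-p_*}$ produces a term of the same type, $fu^{1-d-p_*}H^p$. Both are controlled by the Step 1 quantity. The derivative on $\eta^k$ produces $fu^{2-d-p_*}H^{p-1}R^{-1}\eta^{k-1}$, which is treated by Young exactly as in Step 2.

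\textbf{Step 4: combination and the main difficulty.} Adding a small multiple of the Step 3 identity to the Step 1--2 inequality, and absorbing the small-coefficient terms into the left-hand side, yields \eqref{eq4.2}. The main obstacle I expect is bookkeeping of the constants. The coefficient multiplying $\int\eta^kfu^{1-d-p_*}H^p$ in Step 3 may have an unfavourable sign, so I would use the Step 3 identity with a small weight $\theta$. Its bad terms are then dominated by the coercive terms from Step 1, while the $f^2$ term still enters with a positive coefficient. All integrals must be finite for the absorption to be legitimate. This follows from local boundedness, the positive lower bound on $u$ from Lemma \ref{Hanack}, and $u\in W^{1,p}_{loc}(\overline{\mathcal C})$ together with Proposition \ref{regular}.
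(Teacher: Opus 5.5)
Your proposal is correct and follows essentially the same route as the paper: you test Proposition \ref{prop4.1} with $\varphi=\eta^k$, use subcriticality together with \eqref{d} to make the left-hand side coercive, absorb the right-hand side by Young's inequality, and close the estimate by testing the equation with $\eta^k f(u)u^{2-d-p_*}$. The only difference is bookkeeping, and it is equivalent: the paper keeps the intermediate term $R^{-p}\int_{\mathcal{C}}\eta^{k-p}f(u)u^{1+p-d-p_*}\,\mathrm{d}x$ and removes it with a final Cauchy inequality, whereas you do that splitting inside Step 2 and combine the two inequalities with a small weight.
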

\begin{proof}
Let $A,\widehat{A}, B,D,\widehat{D}$ be the coefficients in \eqref{ineq4.1}. Then by assumptions on $d$ and $\alpha$ we know that $B>0$ and
$$\delta:=\frac{N-1}{N}\left(p^*-\alpha-\frac{p^*-1}{p_*}d\right)>0.$$
Since $f$ is subcritical, it follows that
$$Af(u)+\widehat{A}uf^{\prime}(u)=\delta f(u)+\frac{N-1}{N}\left((\alpha-1)f(u)-uf^{\prime}(u)\right)\geq\delta f(u).$$
By letting $\varphi=\eta^k(\frac{|x-x_0|}{R})$, we deduce from \eqref{ineq4.1} that
\begin{equation}\label{eq4.3}
  \begin{aligned}
  \delta\int_{\mathcal{C}}u^{1-p_*-d}f(u)H^p(\nabla u)\eta^k\mathrm{d}x&+B\int_{\mathcal{C}}u^{-p_*-d}H^{2p}(\nabla u)\eta^k\mathrm{d}x \\
  &\leq \int_{\mathcal{C}}u^{1-p_*-d}\{Duf(u)+\widehat{D}H^p(\nabla u)\}{\bf{a}}(\nabla u)\cdot\nabla(\eta^k)\mathrm{d}x\\
  &+\int_{\mathcal{C}}u^{2-p_*-d}{\bf{a}}(\nabla u)\cdot\nabla^2(\eta^k){\bf{a}}(\nabla u)\mathrm{d}x.
  \end{aligned}
\end{equation}
By using Young's inequality with the respective exponents pairs
$$\left(\frac{p}{p-1},p\right),\,\,\left(\frac{2p}{2p-1},2p\right),\,\,\left(\frac{p}{p-1},p\right),$$
we can estimate the terms on the right side of \eqref{eq4.3} as follows:
$$\left\lvert \int_{\mathcal{C}}u^{2-p_*-d}{\bf{a}}(\nabla u)\cdot\nabla^2(\eta^k){\bf{a}}(\nabla u)\mathrm{d}x\right\rvert \leq \frac{B}{2}\int_{\mathcal{C}}u^{-p_*-d}H^{2p}(\nabla u)\eta^k\mathrm{d}x+C\int_{\mathcal{C}}u^{\sigma-d}|\nabla^2(\eta^k)|^p\eta^{(1-p)k}\mathrm{d}x,$$
\begin{equation*}
  \begin{aligned}
 \left\lvert\widehat{D} \int_{\mathcal{C}}u^{1-p_*-d}H^p(\nabla u){\bf{a}}(\nabla u)\cdot\nabla(\eta^k)\mathrm{d}x\right\rvert &\leq   \frac{B}{2}\int_{\mathcal{C}}u^{-p_*-d}H^{2p}(\nabla u)\eta^k\mathrm{d}x\\
 &+ C\int_{\mathcal{C}}u^{\sigma-d}|\nabla(\eta^k)|^{2p}\eta^{(1-2p)k}\mathrm{d}x,
  \end{aligned}
\end{equation*}
and
\begin{equation*}
  \begin{aligned}
 \left\lvert D \int_{\mathcal{C}}u^{2-p_*-d}f(u){\bf{a}}(\nabla u)\cdot\nabla(\eta^k)\mathrm{d}x\right\rvert &\leq   \frac{\delta}{2}\int_{\mathcal{C}}u^{1-p_*-d}f(u)H^{p}(\nabla u)\eta^k\mathrm{d}x\\
 &+ C\int_{\mathcal{C}}u^{1-d+p-p_*}|\nabla(\eta^k)|^{p}\eta^{(1-p)k}f(u)\mathrm{d}x,
  \end{aligned}
\end{equation*}
where $C=C(n,p,d,\delta)>0$. Therefore, by using the estimates $|\eta|\leq1$, $|\nabla\eta|\leq\frac{2}{R}$ and $|\nabla^2\eta|\leq\frac{C}{R^2}$, we can derive from \eqref{eq4.3} that
\begin{equation}\label{eq4.4}
 \int_{\mathcal{C}}u^{1-p_*-d}f(u)H^p(\nabla u)\eta^k\mathrm{d}x\leq CR^{-2p} \int_{\mathcal{C}}u^{\sigma-d}\eta^{k-2p}\mathrm{d}x+CR^{-p}\int_{\mathcal{C}}u^{1-d+p-p_*}\eta^{k-p}f(u)\mathrm{d}x,
\end{equation}
where $C=C(n,p,d,\delta,k)>0$, and the condition $k>2p$ is used to guarantee the integrals well-defined. Next, we choose $\varphi=\eta^kf(u)u^{2-d-p_*}$ as a test function in \eqref{eq:1.1} to get
\begin{equation}\label{eq4.5}
  \int_{\mathcal{C}}u^{2-p_*-d}f^2(u)\eta^k\mathrm{d}x=\int_{\mathcal{C}}(u^{2-p_*-d}f(u))^{\prime}H^p(\nabla u)\eta^k\mathrm{d}x+k\int_{\mathcal{C}}\eta^{k-1}u^{2-p_*-d}f(u){\bf{a}}(\nabla u)\cdot\nabla\eta\mathrm{d}x.
\end{equation}
By using the Young's inequality with exponent pair $(\frac{p}{p-1},p)$, one has
$$\left\lvert \int_{\mathcal{C}}\eta^{k-1}u^{2-p_*-d}f(u){\bf{a}}(\nabla u)\cdot\nabla\eta\mathrm{d}x\right\rvert\leq \int_{\mathcal{C}}\eta^{k}u^{1-p_*-d}f(u)\left\{H^p(\nabla u)+\frac{u^p}{(R\eta)^p}\right\} \mathrm{d}x. $$
Moreover, since $\alpha<p^*$, we get
\begin{equation*}
  \begin{aligned}
    (u^{2-p_*-d}f(u))^{\prime}&=u^{1-p_*-d}(uf^{\prime}(u)+(2-d-p_*)f(u))\\
    &\leq (\alpha+1-d-p_*)u^{1-p_*-d}f(u)\leq \frac{N}{N-p}u^{1-p_*-d}f(u).
  \end{aligned}
\end{equation*}
Thus it follows from \eqref{eq4.5} that
$$\int_{\mathcal{C}}u^{2-p_*-d}f^2(u)\eta^k\mathrm{d}x\leq C\int_{\mathcal{C}}\eta^{k}u^{1-p_*-d}f(u)H^p(\nabla u)\mathrm{d}x+CR^{-p}\int_{\mathcal{C}}\eta^{k-p}f(u)u^{1+p-d-p_*}\mathrm{d}x.$$
Then combining with \eqref{eq4.4} we deduce
\begin{equation}\label{eq4.6}
 \int_{\mathcal{C}}u^{2-p_*-d}f^2(u)\eta^k\mathrm{d}x\leq CR^{-2p}\int_{\mathcal{C}}\eta^{k-2p}u^{\sigma-d}\mathrm{d}x+CR^{-p}\int_{\mathcal{C}}\eta^{k-p}f(u)u^{1+p-d-p_*}\mathrm{d}x.
\end{equation}
The Cauchy inequality gives that
\begin{equation}\label{b1}
  R^{-p}\int_{\mathcal{C}}\eta^{k-p}f(u)u^{1+p-d-p_*}\mathrm{d}x\leq\frac{1}{2} \int_{\mathcal{C}}u^{2-p_*-d}f^2(u)\eta^k\mathrm{d}x+CR^{-2p}\int_{\mathcal{C}}\eta^{k-2p}u^{\sigma-d}\mathrm{d}x.
\end{equation}
By using \eqref{b1} to eliminate the second integral on the right side in \eqref{eq4.6}, we obtain the validity of \eqref{eq4.2}. Thus we complete the proof of Lemma \ref{lem4.2}.
\end{proof}

\begin{lem}\label{lem4.4}
Let $N\geq3$, $\mathcal{C}\subseteq\mathbb{R}^{N}$ be an open convex cone and $H\in C^{2}_{\text{loc}}(\mathbb{R}^{N}\setminus\{0\})$ be a Finsler norm such that $H^{2}$ is uniformly convex. Let $u$ be a positive weak solution of \eqref{eq:1.1}. Assume that $N\in(p,2p^2-p)$. Let
$$\varrho=\min\left\{2,\frac{2N(p-1)}{2p^2-p-N}\right\} .$$
Then for all $\theta\in(0,\varrho)$, there exists $C(N,\theta)>0$ such that
\begin{equation}\label{eq4.8}
||f(u)u^{1-p}||_{L^{\theta}(B_R\cap\mathcal{C})}\leq CR^{\frac{N}{\theta}-p}.
\end{equation}
\end{lem}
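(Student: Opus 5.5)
The plan is to reduce \eqref{eq4.8} to Lemma \ref{lem4.2} together with the weak Harnack inequality (Lemma \ref{Hanack}), using a H\"older interpolation. Set $g:=f(u)u^{1-p}\geq0$. The first observation is purely algebraic:
$$f^2(u)u^{2-d-p_*}=g^2u^{2p-p_*-d}=g^2u^{\sigma-d},$$
so \eqref{eq4.2} reads
$$\int_{\mathcal{C}}\eta^k g^2u^{\sigma-d}\,\mathrm{d}x\leq CR^{-2p}\int_{\mathcal{C}\cap B_{2R}}u^{\sigma-d}\,\mathrm{d}x .$$
This is a weighted $L^2$ bound for $g$ with weight $u^{\sigma-d}$. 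To remove the weight we use H\"older's inequality with exponents $\frac{2}{\theta}$ and $\frac{2}{2-\theta}$ (this is where $\theta<2$ enters). Writing $g^\theta=(g^2u^{\sigma-d})^{\theta/2}u^{-(\sigma-d)\theta/2}$ and using $\eta\equiv1$ on $B_R$, we get
$$\int_{B_R\cap\mathcal{C}}g^\theta\leq\Big(\int_{\mathcal{C}}\eta^kg^2u^{\sigma-d}\Big)^{\frac{\theta}{2}}\Big(\int_{B_{2R}\cap\mathcal{C}}u^{-\frac{(\sigma-d)\theta}{2-\theta}}\Big)^{\frac{2-\theta}{2}} .$$

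Next we bound the two $u$-integrals in terms of $m:=\min_{B_{2R}\cap\mathcal{C}}u>0$; this minimum is positive since $u$ is a positive supersolution, by Lemma \ref{Hanack}. Choose $d\in(0,1)$ small, as allowed by \eqref{d}, so that $s:=\sigma-d$ is positive.
\begin{itemize}
\item The negative power is trivial: $\int_{B_{2R}\cap\mathcal{C}}u^{-s\theta/(2-\theta)}\leq CR^Nm^{-s\theta/(2-\theta)}$.
\item For the positive power we apply Lemma \ref{Hanack} on the pair $B_{2R}\subset B_{4R}$ with $\gamma=s$. This requires $0<s<p_*-1$ and gives $\|u\|_{L^{s}(B_{2R}\cap\mathcal{C})}\leq\|u\|_{L^{s}(B_{4R}\cap\mathcal{C})}\leq CR^{N/s}m$, with the scaling $R^{N/\gamma}$ obtained by rescaling the cone; hence $\int_{B_{2R}\cap\mathcal{C}}u^{s}\leq CR^Nm^{s}$.
\end{itemize}
Inserting both bounds, the powers of $m$ cancel exactly:
$$\int_{B_R\cap\mathcal{C}}g^\theta\leq C\big(R^{-2p}R^Nm^{s}\big)^{\theta/2}\big(R^Nm^{-s\theta/(2-\theta)}\big)^{(2-\theta)/2}=CR^{N-p\theta}.$$
This is exactly \eqref{eq4.8}.

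The main obstacle, and the place where the hypotheses $N\in(p,2p^2-p)$ and the value of $\varrho$ must enter, is the simultaneous choice of $d$. We need $d\in(0,1)$ compatible with \eqref{d}, and we need $s=2p-p_*-d$ to lie in $(0,p_*-1)$ so that the weak Harnack inequality applies to the exponent $s$. If only a restricted range of $s$ is admissible, I would instead apply Harnack to the exponent $\frac{s\theta}{2-\theta}$ or use a different pairing in H\"older. I expect that the requirement that both exponents $s$ and $\frac{s\theta}{2-\theta}$ (or $s(1-\theta/2)$-type combinations) lie in the Harnack range $(0,p_*-1)$ is precisely what produces the threshold $\theta<\frac{2N(p-1)}{2p^2-p-N}$. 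In writing the proof I would first fix $\theta<\varrho$, then solve these inequalities for an admissible $d$, shrinking $d$ as needed. If $s\le0$ for the relevant parameters, both integrals carry nonpositive powers of $u$. In that case I would instead bound them by the lower decay information (Lemma \ref{lower decay}), or fall back on the elementary case $\theta=1$: testing \eqref{eq:1.1} with $u^{1-p}\eta^p$ and using Young's inequality gives $\int_{B_R\cap\mathcal{C}}g\leq CR^{N-p}$, and one then interpolates between this and the above.
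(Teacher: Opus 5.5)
Your argument is correct, and is the paper's Case 2, exactly when $\sigma=2p-p_*>0$. In the paper's notation $p_*=\frac{p(N-1)}{N-p}$, so
\[
\sigma=\frac{p(N-2p+1)}{N-p},\qquad p_*-1=\frac{N(p-1)}{N-p}.
\]
Hence $\sigma>0$ iff $N>2p-1$, and $\sigma<p_*-1$ iff $N<2p^2-p$; this is where that hypothesis enters. In this range, however, $\varrho=2$.

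The gap is the other range, $N\in(p,2p-1]$, which the lemma also covers. There $\sigma\le0$, and Lemma \ref{lem4.2} requires $d\in(0,1)$, so $s=\sigma-d<0$ and your main argument cannot start. This is precisely the regime in which $\varrho=\frac{2N(p-1)}{2p^2-p-N}<2$ is the binding constraint. Since $\varrho>1$ whenever $N>p$, the exponents $\theta\in(1,\varrho)$ are genuinely part of the statement. They are also needed later: the proof of Theorem \ref{Liou} uses $\theta\in(N/p,\varrho)$ with $N/p>1$ for some $N\le 2p-1$.

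Your fallback for $s\le 0$ misreads the signs. When $s<0$, only the first factor $\int_{B_{2R}\cap\mathcal{C}}u^{s}$ carries a negative power. The second factor carries the positive exponent $\gamma:=\frac{\theta(d-\sigma)}{2-\theta}>0$.

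Neither of your proposed tools controls this factor:
\begin{itemize}
\item Lemma \ref{lower decay} only bounds $u$ from below, so it cannot bound $\int u^{\gamma}$.
\item The test-function estimate $\int_{B_R\cap\mathcal{C}}f(u)u^{1-p}\le CR^{N-p}$ gives \eqref{eq4.8} only for $\theta\le 1$, via H\"older. For $\theta\in(1,\varrho)$ there is no second endpoint to interpolate with, because your main argument is unavailable in this regime.
\end{itemize}

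The missing step is the mirror image of your own argument. With $m=\min_{B_{2R}\cap\mathcal{C}}u$, bound the negative-power factor trivially:
\[
\int_{B_{2R}\cap\mathcal{C}}u^{\sigma-d}\le CR^N m^{\sigma-d}.
\]
Then apply the weak Harnack inequality (Lemma \ref{Hanack}) with exponent $\gamma$ to get
\[
\int_{B_{2R}\cap\mathcal{C}}u^{\gamma}\le CR^N m^{\gamma}.
\]
The powers of $m$ cancel as before and give $CR^{N-p\theta}$.

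Only this one positive exponent has to lie in the Harnack range, not both as you guessed. As $d\to0^+$, the requirement $\gamma<\frac{N(p-1)}{N-p}$ becomes $\frac{-\sigma\theta}{2-\theta}<\frac{N(p-1)}{N-p}$. Using $-\sigma=\frac{p(2p-1-N)}{N-p}$, this is $\theta(2p^2-p-N)<2N(p-1)$, i.e. exactly $\theta<\frac{2N(p-1)}{2p^2-p-N}$. That is the paper's Case 1, and it is where the threshold in $\varrho$ comes from.
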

\begin{proof}
Choose $d$ small enough such that \eqref{d} is valid. Then by using the H\"{o}lder's inequality with exponent pair $(\frac{2}{\theta},\frac{2}{2-\theta})$, combining with \eqref{eq4.2}, we derive
\begin{equation}\label{eq4.9}
  \begin{aligned}
    \int_{B_R\cap\mathcal{C}}(f(u)u^{1-p})^{\theta}\mathrm{d}x
    &\leq \left(\int_{B_R\cap\mathcal{C}}f^2(u)u^{2-d-p_*}\mathrm{d}x\right)^{\frac{\theta}{2}} \left(\int_{B_R\cap\mathcal{C}}u^{\frac{\theta(d-\sigma)}{2-\theta}}\mathrm{d}x\right)^{\frac{2-\theta}{2}}\\
    &\leq \left(CR^{-2p}\int_{B_{2R}\cap\mathcal{C}}u^{\sigma-d}\mathrm{d}x\right)^{\frac{\theta}{2}} \left(\int_{B_R\cap\mathcal{C}}u^{\frac{\theta(d-\sigma)}{2-\theta}}\mathrm{d}x\right)^{\frac{2-\theta}{2}}.
  \end{aligned}
\end{equation}
Then there are now two cases.

\smallskip

\noindent {\bf Case 1.} $N\in(p,2p-1]$. Then it follows from $N\leq2p-1$ that
$$\sigma\leq0\qquad\text{and}\qquad\varrho=\frac{2N(p-1)}{2p^2-p-N}.$$
Then we can derive from $\theta<\varrho\leq2$ that
$$0\leq-\frac{\sigma\theta}{2-\theta}<\frac{N(p-1)}{N-p}.$$
Next, choose $d>0$ small enough such that
$$0<\gamma:=\frac{\theta(d-\sigma)}{2-\theta}<\frac{N(p-1)}{N-p},$$
then by using the fact that $\sigma-d<0$ and Lemma \ref{Hanack}, we obtain
\begin{equation}\label{eq4.10}
  \begin{aligned}
  \int_{B_{2R}\cap\mathcal{C}}u^{\sigma-d}\mathrm{d}x
  &\leq\int_{B_{2R}\cap\mathcal{C}} \left[CR^{-\frac{N}{\gamma}}\left(\int_{B_{4R}\cap\mathcal{C}}u^{\gamma}\mathrm{d}x\right) ^{\frac{1}{\gamma}}\right] ^{\sigma-d}\mathrm{d}x\\
  &=CR^{\frac{2N}{\theta}}\left(\int_{B_{4R}\cap\mathcal{C}}u^{\frac{\theta(d-\sigma)}{2-\theta}}\mathrm{d}x\right) ^{-\frac{2-\theta}{\theta}}.
  \end{aligned}
\end{equation}
Then \eqref{eq4.9} and \eqref{eq4.10} yield the validity of \eqref{eq4.8}.

\smallskip

\noindent {\bf Case 2.} $N\in(2p-1,2p^2-p)$. In this case we have $\sigma\in(0,p_*-1)$, then we can choose $d>0$ small enough such that
$$\gamma:=\sigma-d\in(0,p_*-1).$$
Note that $\frac{\theta(d-\sigma)}{2-\theta}<0$, thus we get
\begin{equation}\label{eq4.11}
  \begin{aligned}
    \int_{B_R\cap\mathcal{C}}u^{\frac{\theta(d-\sigma)}{2-\theta}}\mathrm{d}x
  &\leq\int_{B_{R}\cap\mathcal{C}} \left[CR^{-\frac{N}{\gamma}}\left(\int_{B_{2R}\cap\mathcal{C}}u^{\gamma}\mathrm{d}x\right) ^{\frac{1}{\gamma}}\right] ^{\frac{\theta(d-\sigma)}{2-\theta}}\mathrm{d}x\\
  &=CR^{\frac{2N}{2-\theta}}\left(\int_{B_{2R}\cap\mathcal{C}}u^{\sigma-d}\mathrm{d}x\right) ^{-\frac{\theta}{2-\theta}}.
  \end{aligned}
\end{equation}
Then \eqref{eq4.9} and \eqref{eq4.11} give the validity of \eqref{eq4.8}.
\end{proof}
To deal with the cases of $N=2,\ p<\frac{4}{3}$ and $N\geq3,\ N\geq2p-\frac{1}{2}$, we need following lemma.
\begin{lem}\label{lem4.5}\cite[Lemma 3.4]{SZ}
If either
$$N=2,\ p<\frac{4}{3}\qquad\text{or}\qquad N\geq3,\ N\geq2p-\frac{1}{2},$$
then \\
$(\rm{i})$ $\sigma>\frac{p}{N}$,\\
$(\rm{ii})$ $(\alpha-p)N<(2\alpha-p_*-\frac{p}{N})p$ for all $\alpha\in[p,p^*]$.
\end{lem}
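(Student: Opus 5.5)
Both assertions are elementary inequalities between exponents, so the plan is direct computation. Throughout I use the normalization that comes out of the choice of $a,b$ at the beginning of this section: $2a+b-1=1-p_*-d$ forces $p_*=\frac{N(p-1)}{N-p}$. Consequently $\sigma=2p-p_*$, $p^*-p=\frac{p^2}{N-p}$ and $2p^*-p_*=\frac{N(p+1)}{N-p}$.

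For $(\rm{i})$, multiply $\sigma>\frac{p}{N}$ by $N(N-p)>0$. After expanding, it becomes equivalent to
\begin{equation*}
Q(N):=(p+1)N^2-(2p^2+p)N+p^2>0 .
\end{equation*}
The discriminant of $Q$ in the variable $N$ is $p^2(4p^2-3)>0$, so its larger root is
\begin{equation*}
N_+=\frac{p\left(2p+1+\sqrt{4p^2-3}\right)}{2(p+1)} .
\end{equation*}

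In the case $N\geq 3$, $N\geq 2p-\frac12$, I will show $2p-\frac12>N_+$. This reduces to $2p^2+2p-1>p\sqrt{4p^2-3}$. Both sides are positive, so I square, and the inequality becomes $8p^3+3p^2-4p+1>0$, which clearly holds for $p>1$. Hence $N>N_+$ and $Q(N)>0$.

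In the case $N=2$, I evaluate directly: $Q(2)=4+2p-3p^2$. This is positive for $p<\frac{1+\sqrt{13}}{3}\approx1.535$, and in particular for $p<\frac43$.

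For $(\rm{ii})$, note that the quantity
\begin{equation*}
(\alpha-p)N-\left(2\alpha-p_*-\frac{p}{N}\right)p
\end{equation*}
is affine in $\alpha$. It therefore suffices to check that it is negative at the two endpoints $\alpha=p$ and $\alpha=p^*$.

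At $\alpha=p$ the quantity equals $-p\left(\sigma-\frac pN\right)$, which is negative by $(\rm{i})$.

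At $\alpha=p^*$, substitute the identities above. The inequality becomes
\begin{equation*}
\frac{Np^2}{N-p}<\frac{Np(p+1)}{N-p}-\frac{p^2}{N}.
\end{equation*}
This is equivalent to $p(N-p)<N^2$, which is trivially true.

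The only step requiring any care is the comparison of $2p-\frac12$ with the root $N_+$ in part $(\rm{i})$. Its difficulty is mild: it is one squaring, justified by the positivity of both sides, followed by a polynomial sign check. Everything else is routine algebra.
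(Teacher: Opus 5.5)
Your structure is right: you reduce (i) to the sign of a quadratic in $N$, and you treat (ii) as an affine function of $\alpha$ whose value at $\alpha=p$ is $-p(\sigma-\frac pN)$. The gap is that the whole computation uses the wrong value of $p_*$.

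From $a=-\frac{p-1}{p}p^*$ and $b=\frac{N-1}{N}p^*-d$ one gets $2a+b-1=-\frac{N(p-1)}{N-p}-d$. Matching this with $1-p_*-d$ gives
\[
p_*=1+\frac{N(p-1)}{N-p}=\frac{p(N-1)}{N-p}=\frac{N-1}{N}p^*,
\]
so you dropped the $1$. This value is forced elsewhere in the paper as well:
\begin{itemize}
\item it makes $A=\frac{N-1}{N}(1-\frac{d}{p_*})(p^*-1)$ agree with $(\frac1N+\frac{p-1}{p})b$;
\item it makes $p_*-1=\frac{N(p-1)}{N-p}$ the weak Harnack exponent of Lemma \ref{Hanack}, as that lemma is used in Lemma \ref{lem4.4};
\item it gives $\sigma\le 0\iff N\le 2p-1$, which Lemma \ref{lem4.4} relies on.
\end{itemize}
Your $p_*$ is smaller than the true one by exactly $1$, so your $\sigma$ equals the true $\sigma$ plus $1$. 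Showing $\sigma+1>\frac pN$ does not give $\sigma>\frac pN$. Hence (i) is not proved, and neither is the $\alpha=p$ endpoint of (ii), which rests on (i).

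The symptom was already visible in your $N=2$ case. Your criterion $4+2p-3p^2>0$ would ``prove'' (i) for all $p\in[\frac43,\frac{1+\sqrt{13}}{3})$. There it is actually false: for $N=2$, $p=1.4$ the true $\sigma$ is about $0.47<0.7=\frac pN$.

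Your structure survives once you use the correct $p_*$. Then $\sigma=\frac{p(N-2p+1)}{N-p}$ and
\[
\sigma-\frac pN=\frac{p\left[(N-p)^2-p(p-1)\right]}{N(N-p)} .
\]
\begin{itemize}
\item If $N\ge 2p-\frac12$, then $N-p\ge p-\frac12>0$, so $(N-p)^2\ge p^2-p+\frac14>p(p-1)$.
\item If $N=2$, the bracket equals $4-3p$. This is positive exactly when $p<\frac43$, which explains the threshold in the statement.
\end{itemize}
For (ii), use $2p^*-p_*=\frac{p(N+1)}{N-p}$ and $p^*-p=\frac{p^2}{N-p}$. The affine quantity at $\alpha=p^*$ is then $\frac{p^2}{N}-\frac{p^2}{N-p}=-\frac{p^3}{N(N-p)}<0$. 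At $\alpha=p$ it is $-p(\sigma-\frac pN)<0$ by the corrected (i).
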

\noindent {\bf Proof of Theorem \ref{Liou}.}
We suppose for contradiction that there exists a nontrivial nonnegative bounded solution $u$ of \eqref{eq:1.1}, then Lemma \ref{Hanack} yields that $u>0$ in $\mathcal{C}$.
Let $M:=\sup\limits_{x\in\mathcal{C}}u$, then it follows from \eqref{subcritical} that
\begin{equation}\label{eq f}
  f(u)\geq\frac{f(M)}{M^{\alpha-1}}u^{\alpha-1}\equiv\lambda u^{\alpha-1},
\end{equation}
where $\lambda=\frac{f(M)}{M^{\alpha-1}}>0$ since $f(t)>0$ for any $t>0$.

If $1<\alpha< p$, we deduce from \eqref{eq:1.1} and \eqref{eq f} that $u$ satisfies the inequality
\begin{equation*}
  \left\{
      \begin{aligned}
      &-\operatorname{div}\left({\bf{a}}(\nabla u)\right)\geq\lambda u^{\alpha-1} \quad\,\,\, &{\rm{in}} \,\, \mathcal{C}, \\
      &{\bf{a}}(\nabla u)\cdot \nu =0 \quad\,\,\, &{\rm{on}} \,\, \partial\mathcal{C}.
      \end{aligned}
      \right.
\end{equation*}
By letting $u=\lambda^{p-\alpha}w$, we obtain that $w$ is a positive solution of
\begin{equation*}
  \left\{
      \begin{aligned}
      &-\operatorname{div}\left({\bf{a}}(\nabla w)\right)\geq w^{\alpha-1} \quad\,\,\, &{\rm{in}} \,\, \mathcal{C}, \\
      &{\bf{a}}(\nabla w)\cdot \nu =0 \quad\,\,\, &{\rm{on}} \,\, \partial\mathcal{C}.
      \end{aligned}
      \right.
\end{equation*}
Lemma \ref{lem2.10} derives that $w\equiv0$, a contradiction.

If $\alpha=p$, let $\varphi=u^{1-p}\eta^{\gamma}$ as a test function, where $\gamma>p$ is a constant, $\eta(x)$ is a cut-off function with $0\leq\eta(x)\leq1$, $\eta(x)=1$ in $B_R(0)$, $\eta(x)=0$ in $B^c_{2R}(0)$ and $|\nabla\eta|\leq\frac{C}{R}$ for some constant $C>0$. Then we obtain
\begin{equation*}
  \int_{B_{2R}\cap\mathcal{C}}\eta^{\gamma}\mathrm{d}x+(p-1)\int_{B_{2R}\cap\mathcal{C}}u^{-p}\eta^{\gamma}{\mathbf{a}}(\nabla u)\cdot\nabla u\mathrm{d}x\leq\gamma\int_{B_{2R}\cap\mathcal{C}}u^{1-p}\eta^{\gamma-1}{\mathbf{a}}(\nabla u)\cdot\nabla \eta\mathrm{d}x.
\end{equation*}
By using the Young's inequality with exponents pair $(\frac{p}{p-1},p)$ to the right side, we have
$$\int_{B_{2R}\cap\mathcal{C}}u^{1-p}\eta^{\gamma-1}{\mathbf{a}}(\nabla u)\cdot\nabla \eta\mathrm{d}x\leq \varepsilon\int_{B_{2R}\cap\mathcal{C}}u^{-p}|\nabla u|^p\eta^{\gamma}+CR^{-p}\int_{B_{2R}\cap\mathcal{C}}\eta^{\gamma-p}\mathrm{d}x.$$
Since ${\bf{a}}(\nabla u)\cdot\nabla u=H^p(\nabla u)\geq c_{H}^p|\nabla u|^p$, and choose $\varepsilon>0$ small enough, we deduce
$$\int_{B_{2R}\cap\mathcal{C}}\eta^{\gamma}\mathrm{d}x\leq CR^{-p}\int_{B_{2R}\cap\mathcal{C}}\eta^{\gamma-p}\mathrm{d}x,$$
which gives
$$CR^N\leq CR^{N-p},$$
this derives contradiction as $R\rightarrow+\infty$.

In what following, we assume that $p<\alpha<p^*$. If either $N=2,\,p\geq\frac{4}{3}$ or $N\in[3,2p),\,p>\frac{3}{2}$, then it is easy to check that $N<2p^2-p$. Then Lemma \ref{lem4.4} gives that \eqref{eq4.8} holds for
all $\theta\in(0,\varrho)$. By a simple calculation we have $\frac{N}{p}<\varrho$, then there exists a number $\theta\in(\frac{N}{p},\varrho)$. For such choice of $\theta$, by letting $R\rightarrow+\infty$ we arrive at
$$\left\lVert f(u)u^{1-p}\right\rVert _{L^{\theta}(\mathcal{C})}=0.$$
This contradicts the conditions $f(t)>0$ for all $t>0$ and $u>0$ in $\mathcal{C}$. Then we know that $u\equiv0$.

If either $N=2,\,p<\frac{4}{3}$ or $N\geq3,\,N\geq 2p-\frac{1}{2}$. Define
$$\sigma=2p-p_*,\qquad\tau=2\alpha-p_*,$$
then obviously
$$\tau-\sigma=2(\alpha-p)>0.$$
Note that Lemma \ref{lem4.5} (i) implies
$$\sigma-d>0$$
for all $d<\frac{p}{N}$. Let $k>2p$, by Lemma \ref{lem4.2} we get
$$ \int_{\mathcal{C}}\eta^kf^2(u)u^{2-d-p_*}\mathrm{d}x\leq CR^{-2p}\int_{\mathcal{C}}u^{\sigma-d}\eta^{k-2p}\mathrm{d}x.$$
By \eqref{eq f} and the Young's inequality with exponent pair $(\frac{\tau-d}{\sigma-d},\frac{\tau-d}{\tau-\sigma})$, we have
$$(\eta R)^{-2p}u^{\sigma-d}\leq \varepsilon f^2(u)u^{2-d-p_*}+C(\eta R)^{-p\frac{\tau-d}{\alpha-p}}.$$
Then we obtain
$$\int_{B_R\cap\mathcal{C}}f^2(u)u^{2-d-p_*}\mathrm{d}x\leq CR^{N-p\frac{\tau-d}{\alpha-p}}.$$
Note that Lemma \ref{lem4.5} (ii) implies that
$$N-p\frac{\tau-d}{\alpha-p}<0.$$
Then by letting $R\rightarrow+\infty$, we arrive at
$$\int_{\mathcal{C}}f^2(u)u^{2-d-p_*}\mathrm{d}x=0,$$
which contradicts $f(t)>0$ for all $t>0$.

Thus we complete the proof of Theorem \ref{Liou}.  \qquad\qquad\qquad\quad$\qed$

\smallskip

In order to prove Theorem \ref{liou2}, we need the following decay estimate in convex cones.
\begin{lem}\label{decay}
Let $N\geq3$, $\mathcal{C}\subseteq\mathbb{R}^{N}$ be an open convex cone with vertex set $\mathcal{V}$ and $H\in C^{2}_{\text{loc}}(\mathbb{R}^{N}\setminus\{0\})$ be a Finsler norm such that $H^{2}$ is uniformly convex. Let $f(u)=u^{q}$, $p-1<q<p^*-1$ and $\Omega$ be an arbitrary bounded domain of $\mathbb{R}^N$. Then there exists some constant $C=C(p,N,q,\mathcal{C})>0$ (independent of $\Omega$ and $u$) such that for
any nonnegative solution $u$ of \eqref{eq:1.1} in $\Omega\cap\mathcal{C}$, there holds
\begin{equation}\label{eq decay}
  u(x)\leq C\mathrm{dist}(x,\partial\Omega\cap\mathcal{C})^{-\frac{p}{q+1-p}},\qquad x\in\Omega\cap\mathcal{C}.
\end{equation}
\end{lem}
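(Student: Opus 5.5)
We argue by contradiction using the doubling lemma of Pol\'a\v{c}ik--Quittner--Souplet \cite{PQS} together with a blow-up, and close with the Liouville Theorem \ref{Liou}. Set $\beta:=\frac{p}{q+1-p}>0$, which is well defined since $q>p-1$. Assume the estimate \eqref{eq decay} fails. Then there are bounded domains $\Omega_j$, nonnegative solutions $u_j$ of \eqref{eq:1.1} in $\Omega_j\cap\mathcal{C}$ (with the Neumann condition on $\partial\mathcal{C}\cap\Omega_j$), and points $y_j\in\Omega_j\cap\mathcal{C}$ such that
\[
M_j(y_j):=u_j(y_j)^{1/\beta}>2j\,\mathrm{dist}(y_j,\partial\Omega_j\cap\mathcal{C})^{-1}.
\]

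\textbf{Step 1 (doubling).} We apply the doubling lemma in the metric space $X=\overline{\mathcal{C}}\cap\overline{\Omega_j}$, with $D=\overline{\mathcal{C}}\cap\Omega_j$, $\Gamma=\partial\Omega_j\cap\mathcal{C}$ and $M=M_j$. Here $X$ is complete, and $\mathrm{dist}(x,\Gamma)$ is measured in $\overline{\mathcal{C}}$. This produces $x_j\in D$ with
\[
M_j(x_j)\geq M_j(y_j),\qquad M_j(x_j)>2j\,\mathrm{dist}(x_j,\Gamma),
\]
and $M_j\leq 2M_j(x_j)$ on $D\cap B_{j/M_j(x_j)}(x_j)$. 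The point of working in $\overline{\mathcal{C}}$ is that boundary points of $\mathcal{C}$ are allowed. The Neumann condition prevents any degeneration there, so only $\Gamma$ counts as boundary.

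\textbf{Step 2 (rescaling).} Let $\lambda_j:=M_j(x_j)^{-1}\to0$ and define
\[
v_j(z):=\lambda_j^{\beta}\,u_j(x_j+\lambda_j z).
\]
Since $f(u)=u^q$ and $\beta(q+1-p)=p$, the equation $-\Delta^H_p v_j=v_j^q$ is preserved. The Neumann condition is preserved on $\partial\mathcal{C}_j$, where $\mathcal{C}_j:=\lambda_j^{-1}(\mathcal{C}-x_j)$. The functions $v_j$ are defined on $\mathcal{C}_j\cap B_j(0)$ and satisfy $v_j(0)=1$ and $v_j\leq 2^\beta$ there.

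\textbf{Step 3 (limit domain).} Pass to a subsequence. Because $\mathcal{C}$ is a cone, $\mathcal{C}_j=\mathcal{C}-x_j/\lambda_j$. The shifted cones converge locally, in the Hausdorff sense, to one of three things:
\begin{itemize}
\item $\mathbb{R}^N$, if $\mathrm{dist}(x_j,\partial\mathcal{C})/\lambda_j\to\infty$;
\item a translate of a convex cone $\mathcal{C}_\infty$, namely the tangent cone of $\mathcal{C}$ at a limiting direction of its boundary; this could be a half-space, a wedge, or the original cone if $x_j$ stays near the vertex set at scale $\lambda_j$.
\end{itemize}
In every case the limit is an open convex cone, up to translation.

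\textbf{Step 4 (compactness).} Uniform $C^{1,\theta}$ estimates up to the boundary for bounded solutions with the Neumann condition, as used in the proof of Proposition \ref{regular}, yield a limit $v\in C^1_{loc}$ of $v_j$. This limit solves \eqref{eq:1.1} in $\mathcal{C}_\infty$, satisfies $0\leq v\leq 2^\beta$, and has $v(0)=1$. The Neumann condition passes to the limit in its weak formulation, using test functions as in the definition of weak solution.

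\textbf{Step 5 (conclusion).} The nonlinearity $f(t)=t^q$ is subcritical with $\alpha=q+1\in(1,p^*)$, since $(\alpha-1)f-tf'=0$. Theorem \ref{Liou} therefore gives $v\equiv0$, contradicting $v(0)=1$.

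\textbf{Main obstacle.} The hardest step is the compactness near non-smooth boundary points, namely the vertex set and edges of the cone, combined with the identification of the limit domain when the cones $\mathcal{C}_j$ move. The regularity estimates must be uniform for the family of translated cones. I would first try to use only $C^{0,\theta}$ bounds, which hold up to the boundary by De Giorgi--Moser with Neumann data in convex domains, in the spirit of Lemma \ref{Hanack}. Combined with interior $C^{1,\theta}$ bounds, these give local uniform convergence. Weak $W^{1,p}$ convergence then suffices to pass to the limit in the weak formulation, once a Caccioppoli estimate with Neumann boundary yields strong gradient convergence via monotonicity \eqref{vector inequalities}. The constant $C$ depends on $\mathcal{C}$ only through the family of its tangent cones, which is compact. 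This is why $C=C(p,N,q,\mathcal{C})$.
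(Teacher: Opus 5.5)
Your argument follows the paper's route. It argues by contradiction, applies a doubling argument, rescales as $v_j(z)=\lambda_j^{\beta}u_j(x_j+\lambda_j z)$ with $\beta=\frac{p}{q+1-p}$, passes to a limit by compactness, and contradicts Theorem \ref{Liou} with $\alpha=q+1$.

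Using the Pol\'a\v{c}ik--Quittner--Souplet lemma on $\overline{\mathcal{C}}\cap\overline{\Omega_j}$ is equivalent to the paper's hand-made doubling via the maximum point $\widehat{x}_k$ of $h_k(x)=(\frac{d_k}{2}-|x-x_k|)^{\beta}u_k(x)$. Its conclusion should read $M_j(x_j)\,\mathrm{dist}(x_j,\Gamma)>2j$; that inequality is what puts $\overline{\mathcal{C}}\cap B_{j/M_j(x_j)}(x_j)$ inside $D$. Your compactness sketch is at the same level of detail as the paper's.

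The real difference is Step 3, and there your case split is the better one. The paper splits according to whether $\lambda_k^{-1}|\widehat{x}_k|$ stays bounded, and claims the limit domain is $\mathbb{R}^N$ otherwise. That already fails for $\mathcal{C}=\mathbb{R}^N_+$ when $\widehat{x}_k$ stays within $O(\lambda_k)$ of $\partial\mathcal{C}$ but far from the origin. Splitting by $\lambda_j^{-1}\mathrm{dist}(x_j,\partial\mathcal{C})$, as you do, repairs this.

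The gap, which your proof shares with the paper's, is the sentence ``in every case the limit is an open convex cone, up to translation''. Step 5 depends on it, because Theorem \ref{Liou} is proved only on cones. Write $y_j=x_j/\lambda_j$. Since $\mathcal{C}-y=|y|(\mathcal{C}-y/|y|)$, limits with $|y_j|\to\infty$ and $\mathrm{dist}(y_j,\partial\mathcal{C})=O(1)$ are blow-ups of the cross-section at \emph{moving} boundary points, times a line. For $N\geq 4$ these need not be cones.

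Here is an example. Take $\phi(s,t)=\max\{0,\,2s,\,s+t^2\}$, $K=\{(s,t,r):\ r>\phi(s,t),\ s^2+t^2+r^2<1\}$, and $\mathcal{C}=\{\tau(k,1):\ \tau>0,\ k\in K\}\subset\mathbb{R}^4$. Let $y_j=(0,\epsilon_j^{-1},1,\epsilon_j^{-2})\in\partial\mathcal{C}$ with $\epsilon_j\to0$. A direct computation shows that, for bounded $w$ and large $j$, $y_j+w\in\mathcal{C}$ if and only if $1+w_3>\max\{0,\,2w_1,\,w_1+\frac{(1+\epsilon_j w_2)^2}{1+\epsilon_j^2 w_4}\}$. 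Hence, locally,
\[
\mathcal{C}-y_j\longrightarrow\mathcal{D}:=\{w\in\mathbb{R}^4:\ w_3>\max\{-1,\,w_1,\,2w_1-1\}\}.
\]
The $(w_1,w_3)$-section of $\mathcal{D}$ has two corners, $(-1,-1)$ and $(1,1)$. So $\mathcal{D}$ is not a translate of any cone, and in particular not of the tangent cone at the limiting direction $e_4$.

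The Liouville step therefore has to cover domains like $\mathcal{D}$. Every such limit is an open convex set with $\mathcal{D}+\mathcal{C}\subseteq\mathcal{D}$; this follows by passing $(\mathcal{C}-y_j)+\mathcal{C}\subseteq\mathcal{C}-y_j$ to the limit. Consequently $|B_R(x)\cap\mathcal{D}|\geq|B_R\cap\mathcal{C}|$ for all $x\in\overline{\mathcal{D}}$ and $R>0$.

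You should check that the proof of Theorem \ref{Liou} for $\alpha\in(p,p^*)$ uses only three things. The first is convexity of the boundary, which gives the sign of the boundary terms in Propositions \ref{regular} and \ref{inte ineq}. The second is $|B_R\cap\mathcal{D}|\leq CR^N$. The third is a scale-invariant Neumann weak Harnack inequality as in Lemma \ref{Hanack}; the volume lower bound above is what makes the Moser iteration scale-invariant there. Then verify these persist for such $\mathcal{D}$. Failing that, restrict to cones all of whose blow-up limits are cones, e.g.\ $N=3$, polyhedral cones, or cones with $C^1$ cross-section.
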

\begin{proof}
We will prove \eqref{eq decay} by contradiction. Assume that there exist sequence $u_k$ and $x_k\in\Omega_k\cap\mathcal{C}$ such that $u_k$ solves \eqref{eq:1.1} with $f(u)=u^{q}$ on $\Omega_k\cap\mathcal{C}$ and
\begin{equation}\label{eq4.14}
  u_k(x_k)\mathrm{dist}(x_k,\partial\Omega_k\cap\mathcal{C})^{\frac{p}{q+1-p}}\rightarrow+\infty,\qquad\text{as}\,\,k\rightarrow+\infty.
\end{equation}
Denote $d_k=\mathrm{dist}(x_k,\partial\Omega_k\cap\mathcal{C})$, and define
$$h_k(x):=\left(\frac{d_k}{2}-|x-x_k|\right) ^{\frac{p}{q+1-p}}u_k(x)\quad\text{in}\,\, B_{\frac{d_k}{2}}(x_k)\cap\mathcal{C}.$$
Then choose $\widehat{x}_k\in\overline{\mathcal{C}}$ such that $|\widehat{x}_k-x_k|<\frac{d_k}{2}$ such that
$$h_k(\widehat{x}_k)=\max\limits_{x\in\overline{\mathcal{C}},|x-x_k|\leq\frac{d_k}{2}}h_k(x).$$
Let
$$2\rho_k:=\frac{d_k}{2}-|\widehat{x}_k-x_k|,$$
then we have
$$2\rho_k\in\bigg(0,\frac{d_k}{2}\bigg]  \quad\text{and}\quad\frac{d_k}{2}-|x-x_k|\geq\rho_k,\qquad\text{for}\,\, x\in B_{\rho_k}(\widehat{x}_k)\cap\mathcal{C}.$$
Moreover, there holds
$$(2\rho_k)^{\frac{p}{q+1-p}}u_k(\widehat{x}_k)=h_k(\widehat{x}_k)\geq h_k(x)\geq (\rho_k)^{\frac{p}{q+1-p}}u_k(x),\qquad\text{for}\,\, x\in B_{\rho_k}(\widehat{x}_k)\cap\mathcal{C}.$$
This yields
$$2^{\frac{p}{q+1-p}}u_k(\widehat{x}_k)\geq u_k(x)\quad\text{in}\,\,B_{\rho_k}(\widehat{x}_k)\cap\mathcal{C}.$$
Also we derive from \eqref{eq4.14} that
$$(2\rho_k)^{\frac{p}{q+1-p}}u_k(\widehat{x}_k)=h_k(\widehat{x}_k)\geq h_k(x_k)=\left(\frac{d_k}{2}\right)^{\frac{p}{q+1-p}}u_k(x_k)\rightarrow+\infty,\qquad\text{as}\,\,k\rightarrow+\infty. $$
Define
$$v_k(z):=\frac{1}{u_k(\widehat{x}_k)}u_k\left(\widehat{x}_k+\frac{z}{u_k^{\frac{q+1-p}{p}}(\widehat{x}_k)}\right),\quad z\in B_{R_k}(0)\cap\mathcal{C}_k, $$
where
$$R_k=\rho_ku_k^{\frac{q+1-p}{p}}(\widehat{x}_k)\rightarrow+\infty,\quad\text{as}\,\,k\rightarrow+\infty,$$
and
$$\mathcal{C}_k=u_k^{\frac{q+1-p}{p}}(\widehat{x}_k)(\mathcal{C}-\widehat{x}_k):=\{y\in\mathbb{R}^N\,|\,u_k^{-\frac{q+1-p}{p}}(\widehat{x}_k)y+\widehat{x}_k\in\mathcal{C}\}.$$
Moreover, it follows from \eqref{eq:1.1} that $v_k$ is a positive solution to
\begin{equation}\label{eq4.15}
    \left\{
        \begin{aligned}
        &-\operatorname{div}\left(a(\nabla v_k)\right)=v_k^{q} \quad\,\,\, &{\rm{in}} \,\, B_{R_k}(0)\cap\mathcal{C}_k, \\
        &v_k(0)=1,\\
        &a(\nabla v_k)\cdot \nu =0 \quad\,\,\, &{\rm{on}} \,\, \partial\mathcal{C}_k\cap B_{R_k}(0),\\
        &v_k(z)\leq 2^{\frac{p}{q+1-p}} \quad\,\,\, &{\rm{in}} \,\, B_{R_k}(0)\cap\mathcal{C}_k.
        \end{aligned}
        \right.
\end{equation}
Next, we note that, in fact, there holds $\mathcal{C}_k=\mathcal{C}-u_k^{\frac{q+1-p}{p}}(\widehat{x}_k)\widehat{x}_k$ since $\lambda\mathcal{C}=\mathcal{C}$ for any $\lambda>0$. Thus each $\mathcal{C}_k$ is a translation of $\mathcal{C}$
with its vertex set shifted to $\mathcal{V}-u_k^{\frac{q+1-p}{p}}(\widehat{x}_k)\widehat{x}_k$, hence congruent to $\mathcal{C}$ as a convex cone. In what following, there are two possible cases.

\smallskip

\noindent {\bf Case 1.} $u_k^{\frac{q+1-p}{p}}(\widehat{x}_k)|\widehat{x}_k|$ is bounded. In this case, after up to a subsequence, we may assume that $u_k^{\frac{q+1-p}{p}}(\widehat{x}_k)\widehat{x}_k\rightarrow x_{\infty}$ for some $x_{\infty}\in\mathbb{R}^N$
and $\mathcal{C}_k\rightarrow\mathcal{C}_{\infty}$ as $k\rightarrow+\infty$, where $\mathcal{C}_{\infty}$ is a convex cone. Since $R_k=\rho_ku_k^{\frac{q+1-p}{p}}(\widehat{x}_k)\rightarrow+\infty$, we have $B_{R_k}(0)\cap\mathcal{C}_k\rightarrow\mathcal{C}_{\infty}$ as $k\rightarrow+\infty$.
Then we consider a ball $B_R(x_{\infty})$ with any fixed radius $R>0$. For every compact set $K\subset\subset B_R(x_{\infty})\cap\mathcal{C}_{\infty}$, there exists $\overline{k}\in\mathbb{N}$ such that $K\subset\mathcal{C}_k\cap B_R(x_{\infty})$ for every $k\geq\overline{k}$.
 Then for every $k\geq\overline{k}$, $v_k$ is a bounded solution of \eqref{eq4.15} in $K$. By the regularity estimates for quasilinear elliptic equations of \cite{Dib}, there exist a constant $C>0$ and a real number $\theta\in(0,1)$ such that
 $$\left\lVert v_k\right\rVert _{C^{1,\theta}(K^{\prime})}\leq C $$
 for any $k\geq\overline{k}$ and $K^{\prime}\subset\subset K$. Then it follows from the regularity estimates for quasilinear elliptic equations of \cite{Per}, $v_k\in C^{0,\theta}(B_R(x_{\infty})\cap\overline{\mathcal{C}_{\infty}})$ uniformly for any $R>0$. By the arbitrariness of $R>0$, the Ascoli--Arzel$\acute{\text{a}} $'s Theorem and a diagonal process, we derive that, up to
 a subsequence, $\{v_k\}$ converges to some function $v_{\infty}$ in $C^0(B_R(x_{\infty})\cap\overline{\mathcal{C}_{\infty}})\cap C^1_{loc}(B_R(x_{\infty})\cap\mathcal{C}_{\infty})$ for any $R>0$. By letting $R\rightarrow+\infty$ in \eqref{eq4.15}, we deduce that $v_{\infty}\in W^{1,p}_{loc}(\overline{\mathcal{C}_{\infty}})$ is a nonnegative weak solution of
\begin{equation*}
    \left\{
        \begin{aligned}
        &-\operatorname{div}\left(a(\nabla v_{\infty})\right)=v_{\infty}^{q} \quad\,\,\, &{\rm{in}} \,\, \mathcal{C}_{\infty}, \\
        &v_{\infty}(0)=1,\\
        &a(\nabla v_{\infty})\cdot \nu =0 \quad\,\,\, &{\rm{on}} \,\, \partial\mathcal{C}_{\infty},\\
        &v_{\infty}(z)\leq 2^{\frac{p}{q+1-p}} \quad\,\,\, &{\rm{in}} \,\, \mathcal{C}_{\infty},
        \end{aligned}
        \right.
\end{equation*}
this contradicts the conclusion of Theorem \ref{Liou}.

\smallskip

\noindent {\bf Case 2.} $u_k^{\frac{q+1-p}{p}}(\widehat{x}_k)|\widehat{x}_k|$ is unbounded. By taking a subsequence, we may assume that
$$u_k^{\frac{q+1-p}{p}}(\widehat{x}_k)|\widehat{x}_k|\rightarrow+\infty,\quad \text{as}\quad k\rightarrow+\infty.$$
 In this case, we have  $B_{R_k}(0)\cap\mathcal{C}_k\rightarrow\mathbb{R}^N$ as $k\rightarrow+\infty$, since $R_k=\rho_ku_k^{\frac{q+1-p}{p}}(\widehat{x}_k)\rightarrow+\infty$. Then for any fixed ball $B_R$ in $\mathbb{R}^N$, there exists $\overline{k}\in\mathbb{N}$ such that
 $\mathcal{C}_k\cap B_R=B_R$ for each $k\geq\overline{k}$. And also, $v_k$ is a positive weak solution of \eqref{eq4.15} in $B_R$. Similar to the case 1, it follows from the regularity estimates for quasilinear elliptic equations of \cite{Dib}, there exists a constant $C>0$ and a real number $\theta\in(0,1)$ such that
 $$\left\lVert v_k\right\rVert _{C^{1,\theta}(B_{\frac{R}{2}})}\leq C $$
 for any $k\geq\overline{k}$. Since $R>0$ is arbitrary, then by using the Ascoli--Arzel$\acute{\text{a}} $'s Theorem and a diagonal process, we derive that, up to
 a subsequence, $\{v_k\}$ converges to some function $v_{\infty}$ in $ C^1_{loc}(\mathbb{R}^N)$. By letting $R\rightarrow+\infty$ in \eqref{eq4.15}, we deduce that $v_{\infty}\in W^{1,p}_{loc}(\mathbb{R}^N)$ is a weak solution of
\begin{equation*}
    \left\{
        \begin{aligned}
        &-\operatorname{div}\left(a(\nabla v_{\infty})\right)=v_{\infty}^{q} \quad\,\,\, &{\rm{in}} \,\, \mathbb{R}^N, \\
        &v_{\infty}(0)=1,\\
        &v_{\infty}(z)\leq 2^{\frac{p}{q+1-p}} \quad\,\,\, &{\rm{in}} \,\, \mathbb{R}^N,
        \end{aligned}
        \right.
\end{equation*}
this contradicts the conclusion of Theorem \ref{Liou}.

Thus we complete the proof of Lemma \ref{decay}.
\end{proof}

\noindent {\bf Proof of Theorem \ref{liou2}.} Let $u$ be a nonnegative weak solution of \eqref{eq:1.1} with $f(u)=u^{q}$, and $u\not\equiv0$ in $\mathcal{C}$. Then Lemma \ref{Hanack} implies $u>0$ in $\mathcal{C}$.

Firstly, we consider $p-1<q<p^*-1$. For any $R>0$, let $\Omega=B_R$. Then for any fixed $x\in\mathcal{C}\cap B_R$, by Lemma \ref{decay} we have
$$u(x)\leq C\operatorname{dist}(x,\partial B_R\cap\mathcal{C})^{-\frac{p}{q+1-p}},$$
this gives that $u\equiv0$ in $\mathcal{C}$ by letting $R\rightarrow+\infty$.

In what following, we assume that $0<q\leq p-1$. We consider two cases.

\smallskip

\noindent {\bf Case 1.} If $0<q<p-1$. In this case, we choose parameters $\beta<1-p$ and $\gamma>1$ large enough which values to be determined later. Let $\varphi=u^{\beta}\eta^{\gamma}$ as a test function in \eqref{def weak solu}, where $\eta(x)$ is a cut-off function with $0\leq\eta(x)\leq1$, $\eta(x)=1$ in $B_R(0)$, $\eta(x)=0$ in $B^c_{2R}(0)$ and $|\nabla\eta|\leq\frac{C}{R}$ for some constant $C>0$.
Then we have
\begin{equation*}
  \begin{aligned}
    \int_{B_{2R}\cap\mathcal{C}}u^{q+\beta}\eta^{\gamma}\mathrm{d}x
    &=\int_{B_{2R}\cap\mathcal{C}}{\mathbf{a}}(\nabla u)\cdot\nabla(u^{\beta}\eta^{\gamma})\mathrm{d}x+\int_{\partial(B_{2R}\cap\mathcal{C})}u^{\beta}\eta^{\gamma}{\mathbf{a}}(\nabla u)\cdot\nu\mathrm{d}\mathcal{H}^{N-1}\\
    &=\beta\int_{B_{2R}\cap\mathcal{C}}u^{\beta-1}\eta^{\gamma}{\mathbf{a}}(\nabla u)\cdot\nabla u\mathrm{d}x+\gamma\int_{B_{2R}\cap\mathcal{C}}u^{\beta}\eta^{\gamma-1}{\mathbf{a}}(\nabla u)\cdot\nabla \eta\mathrm{d}x,
  \end{aligned}
\end{equation*}
where we have used the boundary condition ${\bf{a}}(\nabla u)\cdot\nu=0$ on $\partial\mathcal{C}$ and $\eta=0$ on $\partial B_{2R}(0)$. This gives that
\begin{equation}\label{eq4.18}
  \int_{B_{2R}\cap\mathcal{C}}u^{q+\beta}\eta^{\gamma}\mathrm{d}x-\beta\int_{B_{2R}\cap\mathcal{C}}u^{\beta-1}\eta^{\gamma}{\mathbf{a}}(\nabla u)\cdot\nabla u\mathrm{d}x=\gamma\int_{B_{2R}\cap\mathcal{C}}u^{\beta}\eta^{\gamma-1}{\mathbf{a}}(\nabla u)\cdot\nabla \eta\mathrm{d}x.
\end{equation}
By the Young's inequality with the exponents pair $(\frac{p}{p-1},p)$, we obtain
\begin{equation*}\label{eq4.19}
  \begin{aligned}
    \int_{B_{2R}\cap\mathcal{C}}u^{\beta}\eta^{\gamma-1}{\mathbf{a}}(\nabla u)\cdot\nabla \eta\mathrm{d}x
    &\leq \varepsilon\int_{B_{2R}\cap\mathcal{C}}u^{\beta-1}\eta^{\gamma}|{\mathbf{a}}(\nabla u)|^{\frac{p}{p-1}}\mathrm{d}x+C_{\varepsilon}\int_{B_{2R}\cap\mathcal{C}}u^{\beta+p-1}\eta^{\gamma-p}|\nabla \eta|^p\mathrm{d}x\\
    &\leq C\varepsilon\int_{B_{2R}\cap\mathcal{C}}u^{\beta-1}\eta^{\gamma}|\nabla u|^{p}\mathrm{d}x+C_{\varepsilon}R^{-p}\int_{B_{2R}\cap\mathcal{C}}u^{\beta+p-1}\eta^{\gamma-p}\mathrm{d}x,
  \end{aligned}
\end{equation*}
where we have used the fact that $|{\bf{a}}(\nabla u)|\leq C|\nabla u|^{p-1}$ and $|\nabla \eta|\leq \frac{C}{R}$ in the last setp. Note that ${\bf{a}}(\nabla u)\cdot\nabla u=H^p(\nabla u)\geq c_{H}^p|\nabla u|^p$, and choose $\varepsilon>0$ small enough such that
$C\varepsilon=\frac{-\beta}{2}$, we have
$$\int_{B_{2R}\cap\mathcal{C}}u^{q+\beta}\eta^{\gamma}\mathrm{d}x+\frac{-\beta}{2}\int_{B_{2R}\cap\mathcal{C}}u^{\beta-1}\eta^{\gamma}|\nabla u|^{p}\mathrm{d}x\leq C_{\varepsilon}R^{-p}\int_{B_{2R}\cap\mathcal{C}}u^{\beta+p-1}\eta^{\gamma-p}\mathrm{d}x,$$
which yields that
$$\int_{B_{2R}\cap\mathcal{C}}u^{q+\beta}\eta^{\gamma}\mathrm{d}x\leq C_{\varepsilon}R^{-p}\int_{B_{2R}\cap\mathcal{C}}u^{\beta+p-1}\eta^{\gamma-p}\mathrm{d}x.$$
Since $\beta<1-p$ and $q<p-1$, we have $\frac{\beta+q}{\beta+p-1}>1$. By using the Young's inequality with exponents pair $(\frac{\beta+q}{\beta+p-1}, \frac{\beta+q}{q+1-p})$, we deduce
\begin{equation*}
 \begin{aligned}
  \int_{B_{2R}\cap\mathcal{C}}u^{q+\beta}\eta^{\gamma}\mathrm{d}x
  &\leq C_{\varepsilon}R^{-p}\int_{B_{2R}\cap\mathcal{C}}u^{\beta+p-1}\eta^{\gamma-p}\mathrm{d}x\\
  &\leq \varepsilon \int_{B_{2R}\cap\mathcal{C}}u^{q+\beta}\eta^{\gamma}\mathrm{d}x+C_{\varepsilon}R^{-\frac{(\beta+q)p}{(q+1-p)}}\int_{B_{2R}\cap\mathcal{C}}\eta^{\gamma-\frac{(\beta+q)p}{(q+1-p)}}\mathrm{d}x,
 \end{aligned}
\end{equation*}
and thus
\begin{equation}\label{eq4.20}
    \int_{B_{R}\cap\mathcal{C}}u^{q+\beta}\mathrm{d}x\leq CR^{-\frac{(\beta+q)p}{(q+1-p)}}\int_{B_{2R}\cap\mathcal{C}}\eta^{\gamma-\frac{(\beta+q)p}{(q+1-p)}}\mathrm{d}x.
\end{equation}
Next, we choose $\gamma>\frac{(\beta+q)p}{(q+1-p)}$, therefore \eqref{eq4.20} yields
$$\int_{B_{R}\cap\mathcal{C}}u^{q+\beta}\mathrm{d}x\leq CR^{N-\frac{(\beta+q)p}{(q+1-p)}},$$
then choose $\beta<\min\{1-p,\frac{N(q+1-p)}{p}-q\}$ such that $N-\frac{(\beta+q-1)p}{(q+1-p)}<0$, and let $R\rightarrow+\infty$, we obtain
$$\int_{\mathcal{C}}u^{q+\beta}\mathrm{d}x\leq0,$$
which contradicts $u>0$ in $\mathcal{C}$.

\smallskip

\noindent {\bf Case 2.} If $q=p-1$. In this case, we choose $\beta=1-p$, and then by \eqref{eq4.18} we have
\begin{equation}\label{eq4.21}
  \int_{B_{2R}\cap\mathcal{C}}\eta^{\gamma}\mathrm{d}x+(p-1)\int_{B_{2R}\cap\mathcal{C}}u^{-p}\eta^{\gamma}{\mathbf{a}}(\nabla u)\cdot\nabla u\mathrm{d}x=\gamma\int_{B_{2R}\cap\mathcal{C}}u^{1-p}\eta^{\gamma-1}{\mathbf{a}}(\nabla u)\cdot\nabla \eta\mathrm{d}x.
\end{equation}
And also, using the Young's inequality with exponents pair $(\frac{p}{p-1},p)$ to the right side of \eqref{eq4.21}, we have
$$\int_{B_{2R}\cap\mathcal{C}}u^{1-p}\eta^{\gamma-1}{\mathbf{a}}(\nabla u)\cdot\nabla \eta\mathrm{d}x\leq \varepsilon\int_{B_{2R}\cap\mathcal{C}}u^{-p}|\nabla u|^p\eta^{\gamma}+CR^{-p}\int_{B_{2R}\cap\mathcal{C}}\eta^{\gamma-p}\mathrm{d}x.$$
Then using the fact that ${\bf{a}}(\nabla u)\cdot\nabla u=H^p(\nabla u)\geq C|\nabla u|^p$ again, and choose $\varepsilon>0$ small enough, we deduce
$$\int_{B_{2R}\cap\mathcal{C}}\eta^{\gamma}\mathrm{d}x\leq CR^{-p}\int_{B_{2R}\cap\mathcal{C}}\eta^{\gamma-p}\mathrm{d}x.$$
Therefore, we have
$$CR^N\leq CR^{N-p},$$
which leads to a contradiction as $R\rightarrow+\infty$.

Thus we complete the proof of Theorem \ref{liou2}. \qquad\qquad\qquad\quad$\qed$

\section{classification of positive solutions to the critical $p$-Laplacian equation in convex cones}

In this section, we assume that $H(\xi)=|\xi|$ and $f(u)=u^{p^*-1}$. As another application of Proposition \ref{inte ineq}, we can extend the classification result in whole space $\mathbb{R}^N$ in \cite{Ou} to general convex cones. Following the approach in the proof of Theorem 1.1 in \cite{Ou}, let $u>0$ be any positive weak solution of \eqref{eq:1.1}, consider the auxiliary function $v=u^{-\frac{p}{N-p}}$. Then by a directly calculation, $v$ is the positive weak solution to
\begin{equation}\label{eq:5.1}
      \left\{
          \begin{aligned}
          &\Delta _{p}v=\, \frac{N(p-1)}{p}\frac{|\nabla v|^p}{v} +\left(\frac{p}{N-p}\right) ^{p-1}\frac{1}{v} \quad\,\,\, &{\rm{in}} \,\, \mathcal{C}, \\ \\
          &{\bf{a}}(\nabla v)\cdot \nu =0 \quad\,\,\, &{\rm{on}} \,\, \partial\mathcal{C}
          \end{aligned}
          \right.
    \end{equation}
in the sense that
\begin{equation}\label{weak v}
  -\int_{\mathcal{C}}\left\langle {\bf{a}}(\nabla v),\nabla\varphi\right\rangle \mathrm{d}x=\int_{\mathcal{C}}g\varphi\mathrm{d}x
\end{equation}
for all $\varphi\in W^{1,p}(\Omega)\cap L^{\infty}(\Omega)$ with $\Omega\subset\mathbb{R}^N$ bounded and $\varphi=0$ on $\partial\Omega\cap\mathcal{C}$.
Denote the right-hand side of the first identity in \eqref{eq:5.1} by
$$g=\alpha v^{-1}|\nabla v|^p +\beta v^{-1},$$
where
\begin{equation}\label{ab}
  \alpha=\frac{N(p-1)}{p},\qquad\beta=\left(\frac{p}{N-p}\right)^{p-1}.
\end{equation}
Let $\mathcal{C}_{cr}:=\{x\in\mathcal{C}| \nabla u(x)=0\}$ denote the critical points of $u$ in $\mathcal{C}$, and $\mathcal{C}^c_{cr}:=\mathcal{C}\backslash\mathcal{C}_{cr}$. Firstly, by Lemma 2.1 of \cite{CFR} we know that $u\in L^{\infty}_{loc}(\overline{\mathcal{C}})$, and by the fundamental regularity results in \cite{AF,Da,Ev,Le,T} (see also \cite{ACF} for anisotropic setting) and Proposition \ref{regular}, we have
\begin{equation*}
  u\in W_{loc}^{2,2}(\mathcal{C}\backslash\mathcal{C}_{cr})\cap C^{1,\theta}_{loc}(\mathcal{C}),\qquad |\nabla u|^{p-2}\nabla u\in W_{loc}^{1,2}(\overline{\mathcal{C}})
\end{equation*}
for some $\theta\in(0,1)$, and
\begin{equation*}
  |\nabla u|^{p-2}\nabla^2 u\in L_{loc}^{2}(\mathcal{C}\backslash\mathcal{C}_{cr}).
\end{equation*}
Also, $v$ inherts same regularity properties from $u$ that
\begin{equation*}\label{eq:5.2}
  v\in W^{2,2}_{loc}(\mathcal{C})\cap C^{1,\theta}_{loc}(\mathcal{C}),\qquad|\nabla v|^{p-2}\nabla v\in W^{1,2}_{loc}(\overline{\mathcal{C}}),
\end{equation*}
for some $\theta\in(0,1)$ and
\begin{equation*}\label{eq:5.3}
 |\nabla v|^{p-2}\nabla^2v\in L^{2}_{loc}(\mathcal{C}\backslash\mathcal{C}_{cr}),\qquad v\in C^{\infty}(\mathcal{C}\backslash\mathcal{C}_{cr}).
\end{equation*}
Moreover, it follows from Lemma \ref{lower decay} and \eqref{norm0} that there exists a positive constant $C$ depending only on $N,p,H$ and  $\max\limits_{|x|=1,x\in\mathcal{C}}v(x)$, such that
\begin{equation}\label{eq5.4}
  v(x)\leq C|x|^{\frac{p}{p-1}}
\end{equation}
for all $x\in\mathcal{C}$ with $|x|>1$.

Recalling that ${\bf{a}}(\nabla v)=|\nabla v|^{p-2}\nabla v$ ($H(\xi)=|\xi|$),  we denote the $i$-th exponent of ${\bf{a}}(\nabla v)$ by ${\bf{a}}_i(\nabla v)$, that is,
$${\bf{a}}_i(\nabla v)=|\nabla v|^{p-2} v_i,$$
and define
$$E_{ij}=\partial_j({\bf{a}}_i(\nabla v))-\frac{1}{N}\partial_k({\bf{a}}_k(\nabla v))\delta_{ij},\qquad E_j=v^{-1}v_iE_{ij}.$$
We use the Einstein convention of summation over repeated indices hereafter. Obviously, the matrix $E=\{E_{ij}\}$ is trace free, i.e., $\textbf{Tr}E=E_{ii}\equiv 0$, and by \eqref{eq:5.1}, we have
$$E_{ij}=\partial_j({\bf{a}}_i(\nabla v))-\frac{1}{N}g\delta_{ij},\qquad E_j=v^{-1}v_i\partial_j({\bf{a}}_i(\nabla v))-\frac{1}{N}gv^{-1}v_j.$$

Since $v\in C^{\infty}(\mathcal{C}^c_{cr})$, so all differential identities in Lemma 2.2 in \cite{Ou} are still valid. That is, we have the following lemma.
\iffalse\begin{lem}\cite[Lemma 2.1]{Ou}\label{lem5.1}
With the notations as in above, then in $\mathcal{C}^c_{cr}$ we have

\,\,(i)\,\,\,$g_j=Nv^{-1}v_iE_{ij}=NE_j$;

\,(ii)\,\,$E_{ij,i}=\frac{N-1}{N}g_j=(N-1)E_j$;

(iii)\,$({\bf{a}}_j(\nabla v)E_{ij})_{,i}=E_{ij}E_{ji}+(N-1){\bf{a}}_j(\nabla v)E_j=\mathbf{Tr}\{E^2\}+(N-1){\bf{a}}_j(\nabla v)E_j$.
\end{lem}
\fi
\begin{lem}\cite[Lemma 2.2]{Ou}\label{lem5.2}
With the notations as in above, then in $\mathcal{C}^c_{cr}$ we have
\begin{equation}\label{eq5.6}
\big( v^{q}g^{m}{\bf{a}}_j(\nabla v)\big)_{,j}=(\alpha+q)v^{q-1}g^m|\nabla v|^p+\beta v^{q-1}g^{m}+Nmv^{q}g^{m-1}{\bf{a}}_i(\nabla v)E_i
\end{equation}
and
\begin{equation}\label{eq5.7}
\big( v^{q}g^{m}{\bf{a}}_j(\nabla v)E_{ij}\big)_{,i}= v^{q}g^m \mathbf{Tr}\{E^2\} +Nmv^{q}g^{m-1}{\bf{a}}_j(\nabla v)E_{ij}E_i +(N-1+q)v^{q}g^{m}{\bf{a}}_j(\nabla v)E_j,
\end{equation}
where $q$, $m$ are constants underdetermined.
\end{lem}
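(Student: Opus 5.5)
The plan is to verify both identities by direct pointwise computation in $\mathcal{C}^c_{cr}$, where $v$ is smooth ($v\in C^\infty(\mathcal{C}\setminus\mathcal{C}_{cr})$). We use only the equation $\partial_j({\bf{a}}_j(\nabla v))=g$ and the definitions of $E_{ij}$ and $E_j$. No boundary terms arise, since the statement is purely local; the cone plays no role here. The one structural input to establish first is the pair of auxiliary identities (the analogue of \cite[Lemma 2.1]{Ou}):
\begin{equation*}
g_{,j}=N E_j,\qquad E_{ij,i}=\frac{N-1}{N}\,g_{,j}=(N-1)E_j .
\end{equation*}

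For the first auxiliary identity, differentiate $g=\alpha v^{-1}|\nabla v|^p+\beta v^{-1}$. This gives
\begin{equation*}
g_{,j}=-v^{-1}g\,v_j+\alpha v^{-1}\partial_j(|\nabla v|^p).
\end{equation*}
Next use $\partial_j(|\nabla v|^p)=\frac{p}{p-1}\,v_i\,\partial_j({\bf{a}}_i(\nabla v))$, which is the isotropic case of $\mathbf{U}\nabla u=\frac{p-1}{p}\nabla(H^p(\nabla u))$ together with the symmetry of $\nabla({\bf{a}}(\nabla v))$ for $p$-Laplacian fields. Since $\alpha\frac{p}{p-1}=N$, we get
\begin{equation*}
g_{,j}=Nv^{-1}v_i\partial_j({\bf{a}}_i)-v^{-1}gv_j=N\big(v^{-1}v_iE_{ij}+\tfrac1N gv^{-1}v_j\big)-v^{-1}gv_j=NE_j .
\end{equation*}
For the second auxiliary identity, write
\begin{equation*}
E_{ij,i}=\partial_i\partial_j({\bf{a}}_i)-\tfrac1N g_{,j}.
\end{equation*}
Commuting derivatives (legitimate in $C^\infty$) gives $\partial_i\partial_j({\bf{a}}_i)=\partial_j g$, so $E_{ij,i}=\frac{N-1}{N}g_{,j}$.

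Then \eqref{eq5.6} follows from the product rule:
\begin{equation*}
(v^qg^m{\bf{a}}_j)_{,j}=qv^{q-1}g^m|\nabla v|^p+mv^qg^{m-1}g_{,j}{\bf{a}}_j+v^qg^{m}\,g .
\end{equation*}
Substitute $g=\alpha v^{-1}|\nabla v|^p+\beta v^{-1}$ in the last factor and $g_{,j}=NE_j$ in the middle term. For \eqref{eq5.7}, expand
\begin{equation*}
(v^qg^m{\bf{a}}_jE_{ij})_{,i}=qv^{q-1}v_i g^m{\bf{a}}_jE_{ij}+mv^qg^{m-1}g_{,i}{\bf{a}}_jE_{ij}+v^qg^m\partial_i({\bf{a}}_j)E_{ij}+v^qg^m{\bf{a}}_jE_{ij,i}.
\end{equation*}
Each term is then identified as follows.
\begin{itemize}
\item The first term equals $qv^qg^m{\bf{a}}_jE_j$ by the definition of $E_j$.
\item The second equals $Nmv^qg^{m-1}{\bf{a}}_jE_{ij}E_i$.
\item The third equals $\mathbf{Tr}\{E^2\}$, because $\partial_i({\bf{a}}_j)=E_{ji}+\frac1N g\delta_{ij}$ and $E$ is trace free.
\item The fourth equals $(N-1)v^qg^m{\bf{a}}_jE_j$ by the second auxiliary identity.
\end{itemize}
Summing the four terms gives \eqref{eq5.7}.

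The main point needing care is the symmetry of $\partial_j({\bf{a}}_i(\nabla v))$. In $\mathcal{C}^c_{cr}$ this matrix equals $|\nabla v|^{p-2}\big(I+(p-2)\frac{\nabla v\otimes\nabla v}{|\nabla v|^2}\big)\nabla^2 v$, which is not obviously symmetric. However, its contraction with $v_i$ is symmetric in the required sense, because $v_i\partial_j{\bf{a}}_i=\frac{p-1}{p}\partial_j|\nabla v|^p$. The symmetry of $E_{ij}E_{ji}$ versus $E_{ij}E_{ij}$ also only enters through the trace, so writing $\mathbf{Tr}\{E^2\}=E_{ij}E_{ji}$ is consistent. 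One must also keep track of the index order in $E_j=v^{-1}v_iE_{ij}$, so that the contraction with ${\bf{a}}_j$ in \eqref{eq5.6} matches the definition used in the first auxiliary identity. Everything else is routine algebra identical to \cite[Lemma 2.2]{Ou}.
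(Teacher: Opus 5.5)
Your proposal is correct and takes essentially the same route as the paper. The paper simply notes that $v\in C^\infty(\mathcal{C}^c_{cr})$ and invokes \cite[Lemma 2.2]{Ou}, whose proof is exactly the pointwise product-rule computation you give, built on the auxiliary identities $g_{,j}=NE_j$ and $E_{ij,i}=(N-1)E_j$. One simplification: no symmetry of $\partial_j({\bf{a}}_i(\nabla v))$ is needed anywhere (it is in fact not symmetric in general). The identity $v_i\,\partial_j({\bf{a}}_i(\nabla v))=(p-1)|\nabla v|^{p-2}v_kv_{kj}=\frac{p-1}{p}\partial_j|\nabla v|^p$ follows by direct differentiation, and the third term in your expansion of \eqref{eq5.7} only ever produces $E_{ji}E_{ij}=\mathbf{Tr}\{E^2\}$.
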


The key idea is to apply \eqref{eq5.7} with $q=1-N$ and integrate the identity in $\mathcal{C}$. Actually, based on Proposition \ref{inte ineq}, we can easily extend Proposition 2.3 of \cite{Ou}, which is the key integral inequality to obtain classification result of solutions, to the following integral version of \eqref{eq5.7}.

\begin{prop}\label{pro2.4}
Let $N\geq3$, $\mathcal{C}\subseteq\mathbb{R}^{N}$ be an open convex cone. Let $u\in W^{1,p}_{loc}(\overline{\mathcal{C}})$ be a positive weak solution of (\ref{eq:1.1}) with $f(u)=u^{p^*-1}$, and using the notations as before, then
\begin{equation}\label{eq5.8}
\int_{\mathcal{C}}\varphi v^{1-N}g^m \mathbf{Tr}\{E^2\}\mathrm{d}x+Nm\int_{\mathcal{C}}\varphi v^{1-N}g^{m-1}{\bf{a}}_j(\nabla v)E_{ij}E_i\mathrm{d}x\leq -\int_{\mathcal{C}} v^{1-N}g^{m}{\bf{a}}_j(\nabla v)E_{ij}\varphi_i \mathrm{d}x
\end{equation}
holds for every $0\leq \varphi\in W^{1,p}(\Omega)$ with $\Omega\subset\mathbb{R}^N$ bounded and $\varphi=0$ on $\partial\Omega\cap\mathcal{C}$.
\end{prop}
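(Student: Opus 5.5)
Formally, \eqref{eq5.8} is \eqref{eq5.7} with $q=1-N$, multiplied by $\varphi$ and integrated over $\mathcal{C}$. Taking $q=1-N$ kills the last term in \eqref{eq5.7}. Integrating by parts then gives
\begin{equation*}
-\int_{\mathcal{C}} v^{1-N}g^m{\bf a}_j(\nabla v)E_{ij}\varphi_i\,\mathrm{d}x+\int_{\partial\mathcal{C}}\varphi v^{1-N}g^m{\bf a}_j(\nabla v)E_{ij}\nu_i\,\mathrm{d}\mathcal{H}^{N-1}.
\end{equation*}
Two issues must be handled: the identity only holds classically on $\mathcal{C}^c_{cr}$, and there is a boundary term on $\partial\mathcal{C}$. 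I would handle both by not working with $v$ directly. Instead I would translate everything back to $u$ and use Proposition \ref{inte ineq}, whose proof already contains the boundary analysis (approximating cones $\mathcal{C}_k$, regularized fields ${\bf a}^l$, the sign of the second fundamental form, and lower semicontinuity of the trace term).

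Concretely, since $v=u^{-\frac{p}{N-p}}$ and $H(\xi)=|\xi|$, we have ${\bf a}(\nabla v)=c\,u^{-\frac{p(p-1)}{N-p}-(p-1)}{\bf a}(\nabla u)$, so ${\bf a}(\nabla v)$ is a vector field of the form $\mathbf{v}=u^a{\bf a}(\nabla u)$ as in \eqref{vUV}, with a specific $a$ (up to sign and constant). Then $\partial_j{\bf a}_i(\nabla v)$ is a multiple of $\mathbf{V}$, and $g$ is a multiple of $u^{a-1}$ times a combination of $H^p(\nabla u)$ and $uf(u)$, by Lemma \ref{ident}. Hence $E$ is the trace-free part of $\mathbf{V}$, and $\mathbf{Tr}\{E^2\}$ is exactly $I(x)$ from Lemma \ref{I(x)}. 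The field $v^{1-N}g^m{\bf a}_j(\nabla v)E_{ij}$ becomes $\bm{\omega}$-type: $(\mathbf{v}\nabla\mathbf{v}-\frac1N\mathbf{v}\operatorname{div}\mathbf{v})$ times a weight. I would first treat the case $m=0$ with weight $u^b$ for the appropriate $b$. Then $-\int\bm\omega\cdot\nabla\varphi\ge\int(u^bI+\psi)\varphi$ is \eqref{key}. I would check by a direct computation, parallel to the pointwise identity \eqref{eq5.7}, that $\psi$ coincides with the middle term $Nm\,v^{1-N}g^{m-1}{\bf a}_jE_{ij}E_i$, which vanishes at $m=0$. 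Indeed, the proof of Proposition \ref{inte ineq} is just the integrated form of the same divergence computation. For general $m$, I would redo the proof of Proposition \ref{inte ineq} with the extra smooth factor $g^m$ (a function of $u$ and $H^p(\nabla u)$) absorbed into the weight. The interior terms then reproduce \eqref{eq5.7} term by term. The boundary terms are treated exactly as in the proof of Proposition \ref{inte ineq}: the $I_0$ and $I_1$ boundary terms vanish because ${\bf a}(\nabla u)\cdot\nu=0$. The $I_2$ boundary term, on approximating smooth convex cones, equals the tangential derivative term, which is zero, minus $\mathrm{II}({\bf a},{\bf a})\ge0$. So it contributes with a favorable sign, giving ``$\le$'' instead of equality.

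To pass from $C^\infty_c$ test functions to $0\le\varphi\in W^{1,p}(\Omega)$ vanishing on $\partial\Omega\cap\mathcal{C}$, I would use density. Near the critical set, the integrands are controlled because $v^{1-N}$ and $g^m$ are locally bounded (since $v$ is positive and continuous), and ${\bf a}(\nabla v)\in W^{1,2}_{loc}(\overline{\mathcal{C}})$, so $E\in L^2_{loc}$. The set $\mathcal{C}_{cr}$ causes no trouble because $\nabla{\bf a}(\nabla v)=0$ a.e. there.

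The main obstacle is the approximation step for general $m$. Along the approximating sequence $u_{l,k,R}$ I need the quadratic term $\int\varphi\, w\,\mathbf{Tr}\{E^2\}$ to be lower semicontinuous, as in \eqref{3.29}--\eqref{3.32}. That argument used convexity of $A\mapsto\mathrm{trace}(A^2)\psi$ on symmetric matrices, which fails in general since $\mathrm{trace}(A^2)$ is not a convex function of nonsymmetric $A$. So I would apply it to $\mathbf{Tr}\{E^2\}$ and rely on the fact that $\nabla{\bf a}(\nabla u)=AB$ is similar to a symmetric matrix, as in Lemma \ref{le:2.9}. The mixed term $Nm\,g^{m-1}{\bf a}_jE_{ij}E_i$ is also quadratic in $E$ and must converge or be controlled. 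I would handle it by the equi-integrability coming from \eqref{al2} with $N\ge3$ together with strong $C^1$ convergence of the coefficients.
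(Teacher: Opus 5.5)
Your $m=0$ step is the same as the paper's. With $a=-\frac{N(p-1)}{N-p}$ and $b=\frac{p(N-1)}{N-p}$ one has ${\bf a}(\nabla v)=-(\frac{p}{N-p})^{p-1}\mathbf{v}$, $v^{1-N}=u^b$, $\psi\equiv0$, and $\mathbf{Tr}\{E^2\}$ a positive multiple of $I(x)$, so \eqref{key} becomes \eqref{m01}.

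The gap is in your general-$m$ step. The factor $g^m$ is not smooth: $g=\alpha v^{-1}|\nabla v|^p+\beta v^{-1}$ depends on $\nabla u$. So if you use $u^bg^m$ as the weight in the proof of Proposition \ref{inte ineq}, the derivative of the weight produces the term $Nm\,v^{1-N}g^{m-1}{\bf a}_j(\nabla v)E_{ij}E_i$. This term is quadratic in the second-order quantity $\nabla{\bf a}^l(\nabla u_{l,k,R})$ and has no sign. Along the approximating sequence you only know ${\bf a}^l(\nabla u_{l,k,R})\rightharpoonup{\bf a}(\nabla u_{k,R})$ weakly in $W^{1,2}_{loc}$, and similarly in $k$. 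Equi-integrability from \eqref{al2} only gives a weakly convergent subsequence of these quadratic expressions in $L^1_{loc}$. It does not identify the weak limit as the product of the limits. The strong $C^1$ convergence of $u_{l,k,R}$ controls only the zeroth- and first-order coefficients, not the second copy of $E$. Since the term is sign-indefinite, the lower-semicontinuity device of \eqref{3.29}--\eqref{3.32} is unavailable as well. So the limit passage you describe does not go through, and \eqref{eq5.8} for $m\neq0$ is not established by your argument.

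The missing idea is that no new approximation of $u$ is needed. Once \eqref{m01} holds, use $g^m\varphi$ as the test function in it. This is admissible because $g\in W^{1,2}_{loc}(\overline{\mathcal{C}})$ is locally bounded and bounded below by $\beta v^{-1}>0$; the paper implements it through a mollified $g^{\varepsilon}$. Then $\partial_i(g^m\varphi)=mg^{m-1}g_i\varphi+g^m\varphi_i$. The identity $g_i=Nv^{-1}v_jE_{ji}=NE_i$ (Lemma 2.1(i) of \cite{Ou}) holds classically on the open set $\mathcal{C}^c_{cr}$. It holds a.e.\ because $\mathcal{C}_{cr}$ is null: ${\bf a}(\nabla v)\in W^{1,2}_{loc}$ has vanishing gradient a.e.\ on $\{\nabla v=0\}$, while $\Delta_pv=g>0$. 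This identity turns the first piece into $-Nm\int_{\mathcal{C}}\varphi v^{1-N}g^{m-1}{\bf a}_j(\nabla v)E_{ij}E_i\,\mathrm{d}x$, and moving it to the left-hand side gives \eqref{eq5.8}. The quadratic term now arises as the fixed $L^2$ function $E_{ij}$ paired with $g_i$, and the only limit taken is the strongly convergent $g^{\varepsilon}\to g$. Nothing quadratic ever has to pass to a weak limit.
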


\begin{proof}
Let $a=-\frac{N(p-1)}{N-p}$, $b=\frac{p(N-1)}{N-p}$, then one can easily check that $\psi(x)=0$ in Proposition \ref{inte ineq}. Then we have
\begin{equation}\label{m0}
 - \int_{\mathcal{C}}\bm{\omega}(x)\cdot\nabla\varphi(x)\mathrm{d}x\geq\int_{\mathcal{C}}u^{\frac{p(N-1)}{N-p}}I(x)\varphi(x)\mathrm{d}x
\end{equation}
holds for any $0\leq\varphi\in W^{1,p}(\Omega)$ with $\Omega\subset\mathbb{R}^{N}$ bounded and $\varphi=0$ on $\partial\Omega\cap\mathcal{C}$. Recall that $v=u^{-\frac{p}{N-p}}$, then by \eqref{m0} and a simple calculation, we obtain  for $m=0$, there holds
\begin{equation}\label{m01}
 \int_{\mathcal{C}}\varphi v^{1-N}\mathbf{Tr}\{E^2\} \mathrm{d}x\leq -\int_{\mathcal{C}} v^{1-N}{\bf{a}}_j(\nabla v)E_{ij}\varphi_i\mathrm{d}x .
\end{equation}
For $m\neq 0$ we argue by approximation. So, we first extend $v$ as $0$ outside $\mathcal{C}$. For $\varepsilon>0$, we define $v^{\varepsilon}=v\ast \rho^{\varepsilon}$, where $\rho^{\varepsilon}$ is a standard mollifier. Also, we set
 $$g^{\varepsilon}=\alpha(v^{\varepsilon})^{-1}|\nabla v^{\varepsilon}|^p +\beta(v^{\varepsilon})^{-1}.$$
Now replacing $\varphi$ with $(g^{\varepsilon})^m\varphi$ in \eqref{m01}, we have
\begin{equation}\label{eq5.11}
 \int_{\mathcal{C}}(g^{\varepsilon})^m\varphi v^{1-N}\mathbf{Tr}\{E^2\}\mathrm{d}x
 \leq -\int_{\mathcal{C}} v^{1-N}{\bf{a}}_j(\nabla v)E_{ij}\big[m(g^{\varepsilon})^{m-1}(g^{\varepsilon})_i\varphi+(g^{\varepsilon})^m\varphi_i\big]\mathrm{d}x.
\end{equation}

Noticing that $E_{ij}\in L^{2}_{loc}(\overline{\mathcal{C}})$ and $g\in W^{1,2}_{loc}(\overline{\mathcal{C}})$,
we have $g^{\varepsilon}\rightarrow g$ and $(g^{\varepsilon})_i\rightarrow g_i=Nv^{-1}v_jE_{ji}=NE_i$ in $L^{2}_{loc}(\overline{\mathcal{C}})$ as  $\varepsilon\rightarrow 0$.
 Then we obtain \eqref{eq5.8} from \eqref{eq5.11} easily by letting $\varepsilon\rightarrow 0$.
\end{proof}

By choosing $v^{1-q}\varphi$ as a test function in \eqref{weak v}, we also have following is the integral identity, which is the integral counterpart of \eqref{eq5.6} with $m=0$ and will be used to deal with the ``error" term on the right-hand side of \eqref{eq5.8}.

\begin{lem}\label{lem5}
Let $N\geq2$, $\mathcal{C}\subseteq\mathbb{R}^{N}$ be an open convex cone, and $\alpha$, $\beta$ be as in \eqref{ab}. Then
\begin{equation}\label{eq:5.12}
(\alpha+1-q)\int_{\mathcal{C}}v^{-q}|\nabla v|^p\varphi\mathrm{d}x +\beta\int_{\mathcal{C}} v^{-q}\varphi\mathrm{d}x = -\int_{\mathcal{C}} v^{1-q} {\bf{a}}_j(\nabla v)\varphi_j \mathrm{d}x
\end{equation}
for all $\varphi\in W^{1,p}(\Omega)\cap L^{\infty}(\Omega)$ with $\Omega\subset\mathbb{R}^N$ bounded and $\varphi=0$ on $\partial\Omega\cap\mathcal{C}$.
\end{lem}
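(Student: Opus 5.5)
The identity \eqref{eq:5.12} should follow by testing the weak formulation \eqref{weak v} with $\phi:=v^{1-q}\varphi$ and expanding the gradient. The Neumann condition ${\bf{a}}(\nabla v)\cdot\nu=0$ on $\partial\mathcal{C}$ is already built into the weak formulation, since test functions need only vanish on $\partial\Omega\cap\mathcal{C}$ and not on $\partial\mathcal{C}$. So no boundary terms should arise.

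First we check that $\phi$ is admissible. Since $u$ is positive, continuous and locally bounded on $\overline{\mathcal{C}}$, and positive up to the boundary by Lemma \ref{Hanack}, the function $v=u^{-\frac{p}{N-p}}$ is bounded above and below by positive constants on $\overline{\Omega}\cap\overline{\mathcal{C}}$. Hence $v^{1-q}$ and $v^{-q}$ are bounded there. Moreover $v\in W^{1,p}_{loc}(\overline{\mathcal{C}})$, with $\nabla v=-\frac{p}{N-p}u^{-\frac{N}{N-p}}\nabla u$. By the chain rule, $v^{1-q}\in W^{1,p}\cap L^\infty$ on $\Omega\cap\mathcal{C}$, and therefore $\phi\in W^{1,p}(\Omega)\cap L^\infty(\Omega)$ with $\phi=0$ on $\partial\Omega\cap\mathcal{C}$. (If one wants to avoid any doubt about the weak formulation of \eqref{eq:5.1} itself, it is obtained from \eqref{def weak solu} for $u$ by testing with $u^{\frac{N(p-1)}{N-p}}\tilde\varphi$-type functions; this is the "direct calculation" already asserted in the paper, so we take \eqref{weak v} as given.)

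Next we compute. We have $\nabla\phi=(1-q)v^{-q}\varphi\nabla v+v^{1-q}\nabla\varphi$, and ${\bf{a}}(\nabla v)\cdot\nabla v=|\nabla v|^p$. Plugging into \eqref{weak v} gives
\begin{equation*}
-(1-q)\int_{\mathcal{C}}v^{-q}|\nabla v|^p\varphi\,\mathrm{d}x-\int_{\mathcal{C}}v^{1-q}{\bf{a}}_j(\nabla v)\varphi_j\,\mathrm{d}x=\int_{\mathcal{C}}\big(\alpha v^{-1}|\nabla v|^p+\beta v^{-1}\big)v^{1-q}\varphi\,\mathrm{d}x .
\end{equation*}
Rearranging this, the terms with $v^{-q}|\nabla v|^p\varphi$ collect to the coefficient $\alpha+1-q$, and the result is exactly \eqref{eq:5.12}.

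We do not expect a real obstacle here. The only delicate point is the admissibility of $v^{1-q}\varphi$ near $\partial\mathcal{C}$, which rests on the positivity and local boundedness of $u$ up to the boundary. All integrals are finite because $|\nabla v|^p\in L^1_{loc}(\overline{\mathcal{C}})$ and the powers of $v$ are bounded on $\operatorname{supp}\varphi$.
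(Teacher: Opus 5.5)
Your proposal is correct and is the paper's own argument: the paper obtains \eqref{eq:5.12} by taking $v^{1-q}\varphi$ as test function in \eqref{weak v}, and your admissibility check (local boundedness of $u$ together with the uniform positive lower bound from Lemma \ref{Hanack}) makes explicit what the paper leaves implicit. One small slip in your parenthetical, which does not affect the argument since you take \eqref{weak v} as given: that weak formulation comes from testing the $u$-equation with $u^{-\frac{N(p-1)}{N-p}}\tilde\varphi$, i.e.\ with a negative exponent, not a positive one.
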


By \eqref{eq5.4}, \eqref{eq:5.12} and restricting all integrals in the proof of Lemma 2.1 of \cite{Ve1} in convex cone $\mathcal{C}$, one can easily to check that the integral estimate (2.3) of \cite{Ve1} is still valid by replacing the integration region $B_R$ by $B_R\cap\mathcal{C}$.
\begin{lem}\label{lem5.5}
Let $N\geq3$, $\mathcal{C}\subseteq\mathbb{R}^{N}$ be an open convex cone, and $\alpha$, $\beta$ be as in \eqref{ab}. Let $1<p<N$, $r\in[0,p]$, $q<\alpha+1$ and $R>1$. Then there exists a constant $C=C(N,p,q,r)>0$ such that, for $r \leq q<\alpha+1$, there holds
\begin{equation*}\label{eq5.13}
\int_{B_R\cap\mathcal{C}}v^{-q }|\nabla v|^r\mathrm{d}x\leq CR^{N-q },
\end{equation*}
and for $q<r$, there holds
\begin{equation*}\label{eq5.14}
\int_{B_R\cap\mathcal{C}}v^{-q }|\nabla v|^r\mathrm{d}x \leq CR^{N-\frac{pq-r}{p-1} }.
\end{equation*}
\end{lem}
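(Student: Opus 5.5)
The plan is to follow the proof of Lemma 2.1 of \cite{Ve1}, with every integral restricted to $B_R\cap\mathcal{C}$. The two inputs are the identity \eqref{eq:5.12} and the upper bound \eqref{eq5.4}. The Neumann condition ${\bf{a}}(\nabla v)\cdot\nu=0$ is already built into \eqref{eq:5.12}, so no boundary integrals over $\partial\mathcal{C}$ will appear. This is the only place where the cone enters.

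\emph{Step 1: the endpoint $r=p$.} Take $\varphi=\eta^k$ in \eqref{eq:5.12}, where $\eta$ is a cutoff equal to $1$ on $B_R$, supported in $B_{2R}$, with $|\nabla\eta|\le C/R$, and $k$ is large. Since $q<\alpha+1$, both terms on the left are nonnegative. The right-hand side is bounded by $k\int v^{1-q}|\nabla v|^{p-1}\eta^{k-1}|\nabla\eta|$. Young's inequality with exponents $(\frac{p}{p-1},p)$ absorbs $\varepsilon\int v^{-q}|\nabla v|^p\eta^k$ and leaves the error term $CR^{-p}\int_{B_{2R}\cap\mathcal{C}}v^{p-q}\eta^{k-p}$. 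This gives
\begin{equation*}
\int_{\mathcal{C}}\big(v^{-q}|\nabla v|^p+v^{-q}\big)\eta^k\,\mathrm{d}x\le CR^{-p}\int_{B_{2R}\cap\mathcal{C}}v^{p-q}\eta^{k-p}\,\mathrm{d}x .
\end{equation*}
The error term is then handled in two cases.
\begin{itemize}
\item If $q\ge p$, write $v^{p-q}R^{-p}=(v^{-q})^{(q-p)/q}R^{-p}$ and apply Young with exponents $\big(\frac{q}{q-p},\frac qp\big)$. This gives $\varepsilon v^{-q}+CR^{-q}$, and the $\varepsilon v^{-q}$ part is absorbed by the $\beta$-term on the left. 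The result is $\int_{B_R\cap\mathcal{C}}v^{-q}(|\nabla v|^p+1)\le CR^{N-q}$.
\item If $q<p$, the exponent $p-q$ is positive, so \eqref{eq5.4} gives $v^{p-q}\le CR^{\frac{p(p-q)}{p-1}}$ on $B_{2R}\cap\mathcal{C}$. This yields the bound $CR^{N-p+\frac{p(p-q)}{p-1}}=CR^{N-\frac{pq-p}{p-1}}$, which is the second claimed estimate with $r=p$.
\end{itemize}
Here \eqref{eq5.4} only holds for $|x|>1$. Near the vertex I would use the local positivity and continuity of $v$ on compact subsets of $\overline{\mathcal{C}}$; this contributes a harmless $O(1)$ since $R>1$.

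\emph{Step 2: the endpoint $r=0$.} Step 1 bounds $\int_{B_R\cap\mathcal{C}}v^{-q}$ by the same quantities. For $q<0$ there is a more direct route: $v^{-q}\le CR^{-\frac{pq}{p-1}}$ by \eqref{eq5.4}, which gives $R^{N-\frac{pq}{p-1}}$. This is exactly the $r=0$ case of the second formula.

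\emph{Step 3: interpolation for $0<r<p$.} Write
\begin{equation*}
v^{-q}|\nabla v|^r=\big(v^{-q_1}|\nabla v|^p\big)^{r/p}\big(v^{-q_2}\big)^{1-r/p},\qquad \tfrac rp q_1+\big(1-\tfrac rp\big)q_2=q,
\end{equation*}
and apply H\"older with exponents $\big(\frac pr,\frac{p}{p-r}\big)$. Then choose $q_1,q_2<\alpha+1$ so that the powers of $R$ from Steps 1--2 combine to the target. For $r\le q$, take $q_1=q_2=q$ when $q\ge p$. When $r\le q<p$, take $q_1=p$ and $q_2=\frac{p(q-r)}{p-r}\in[0,p)$, so that the $q_1$-factor gives $R^{N-p}$; the target $R^{N-q}$ then requires $R^{N-q_2}$ from the $q_2$-factor, and I would try to obtain this by repeating the Step 1 absorption for $v^{-q_2}$. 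For $q<r$, take $q_1=q_2=q$: the exponents $N-\frac{pq-p}{p-1}$ and $N-\frac{pq}{p-1}$ combine, with weights $\frac rp$ and $1-\frac rp$, to exactly $N-\frac{pq-r}{p-1}$.

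The main obstacle I expect is the intermediate regime $r\le q<p$ (and $0\le q<p$ with $r=0$). There the error term $R^{-p}\int v^{p-q}$ cannot be absorbed by Young and must be controlled by \eqref{eq5.4}, which is only an upper bound on $v$ and produces the weaker exponent. To reach $R^{N-q}$, I would first try a bootstrap in $q$: apply Step 1 with $q$ replaced by $q-p$, $q-2p,\dots$, and at each stage feed the bound for $\int v^{p-q}$ back in. The alternative is to use Young with a $q$-dependent split that also consumes part of the $|\nabla v|^p$ term, as in \cite{Ve1}, checking at each stage that the exponents stay below $\alpha+1$.
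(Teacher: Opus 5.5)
Your scheme (test \eqref{eq:5.12} with $\eta^k$, absorb by Young, bound the error term with \eqref{eq5.4}, then interpolate in $r$ by H\"older) is the argument the paper intends. The paper simply points to Lemma 2.1 of \cite{Ve1} with $B_R$ replaced by $B_R\cap\mathcal{C}$. Your Step 1 is correct. However, two regimes of the statement are not proved as written.

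(a) Take $r<q<p$, for instance $r=0<q<p$. Your split with $q_1=p$ needs
\[
\int_{B_R\cap\mathcal{C}}v^{-q_2}\le CR^{N-q_2},\qquad q_2=\frac{p(q-r)}{p-r}\in(0,p),
\]
and you leave this open. Step 1 at $q_2<p$ only gives $R^{N-\frac{pq_2-p}{p-1}}$, which is weaker. The proposed bootstrap $q\to q-p\to q-2p\to\cdots$ goes the wrong way: after $j$ steps and \eqref{eq5.4} you get $R^{N+\frac{jp}{p-1}-\frac{pq}{p-1}}$, which worsens with $j$.

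(b) Take $0<q<r$. Your choice $q_1=q_2=q$ uses $\int_{B_R\cap\mathcal{C}}v^{-q}\le CR^{N-\frac{pq}{p-1}}$, which you justified only for $q<0$. For $q>0$ this would require a lower bound on $v$, i.e.\ an upper decay bound on $u$, and none is available here. Even with the best bound available for $q>0$, namely $R^{N-q}$, your H\"older combination overshoots the target exponent $N-\frac{pq-r}{p-1}$ by $\frac{q(p-r)}{p(p-1)}>0$.

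These are not marginal cases. In the proof of Theorem~\ref{cla} with, say, $N=3$ and $p$ close to $3$, the exponent $Q=N-2+\frac1p+\varepsilon$ satisfies $0<Q<p$, which is used with $r=0$. It also satisfies $0<Q<p-1+p\varepsilon$, so regime (b) is used too.

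The missing idea is to interpolate against the trivial volume bound $|B_R|\le CR^N$ rather than against \eqref{eq5.4}.

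For (a), Step 1 at $q=p$ has error term $R^{-p}\int\eta^{k-p}$, so $\int_{B_R\cap\mathcal{C}}v^{-p}\le CR^{N-p}$. Then for $0\le s\le p$,
\[
\int_{B_R\cap\mathcal{C}}v^{-s}\,\mathrm{d}x\le\Big(\int_{B_R\cap\mathcal{C}}v^{-p}\,\mathrm{d}x\Big)^{\frac sp}|B_R|^{1-\frac sp}\le CR^{N-s}.
\]
Taking $s=q_2$, your Step 3 then yields $R^{N-q}$.

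For (b), put all of $v^{-q}$ on the gradient factor: take $q_1=\frac{pq}{r}<p<\alpha+1$ and $q_2=0$. This gives
\[
\int_{B_R\cap\mathcal{C}}v^{-q}|\nabla v|^r\,\mathrm{d}x\le\Big(\int_{B_R\cap\mathcal{C}}v^{-\frac{pq}{r}}|\nabla v|^p\,\mathrm{d}x\Big)^{\frac rp}|B_R|^{1-\frac rp}\le CR^{\frac rp\left(N-\frac{p(pq/r-1)}{p-1}\right)+N\left(1-\frac rp\right)}=CR^{N-\frac{pq-r}{p-1}}.
\]
This covers every $q<r$ with $r>0$. Together with your Steps 1--2, these two choices prove the lemma.
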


\medskip
 \noindent {\bf Proof of Theorem \ref{cla}.} Let $\eta$ be smooth cut-off functions satisfying:
\begin{equation}\label{eq5.15}
\begin{cases}
                          \eta\equiv 1  &\text{in} \,\,B_R,\\
                        0\leq\eta\leq1  &\text{in} \,\,B_{2R},\\
                          \eta\equiv 0  &\text{in} \,\,\mathbb{R}^N\backslash B_{2R},\\
     |\nabla \eta|\leq \frac{C}{R}  &\text{in} \,\,\mathbb{R}^N,
\end{cases}
\end{equation}
and take a constant $\theta>0$ big enough.
Replacing  $m$ with  $-m$ and $\varphi$ with $\eta^\theta$ in \eqref{eq5.8}, we have

\begin{equation}\label{eq5.16}
\begin{split}
\int_{\mathcal{C}}\eta^\theta v^{1-N}g^{-m} \mathbf{Tr}\{E^2\}\mathrm{d}x &-Nm\int_{\mathcal{C}}\varphi v^{1-N}g^{-m-1}{\bf{a}}_j(\nabla v)E_{ij}E_i\mathrm{d}x\\
&\leq -\theta\int_{\mathcal{C}} \eta^{\theta-1} v^{1-N}g^{-m}{\bf{a}}_j(\nabla v)E_{ij}\eta_i \mathrm{d}x.
\end{split}
\end{equation}
Choose $m=\frac{p-1}{p}-\varepsilon_0>0$, with $\varepsilon_0>0$ small enough and depending only on $N,p$.
Recalling that
 $$g=\alpha v^{-1}|\nabla v|^p +\beta v^{-1},$$
 then we have
\begin{equation*}\label{eq5.17}
\begin{split}
\int_{\mathcal{C}}\eta^\theta v^{1-N}g^{-m} \mathbf{Tr}\{E^2\}\mathrm{d}x
        = \beta\int_{\mathcal{C}}\eta^\theta v^{-N}g^{-m-1}\mathbf{Tr}\{E^2\}\mathrm{d}x+\alpha\int_{\mathcal{C}}\eta^\theta v^{-N}g^{-m-1}|\nabla v|^p \mathbf{Tr}\{E^2\}\mathrm{d}x.
\end{split}
\end{equation*}
Therefore, the left-hand side of \eqref{eq5.16} can be rewritten as
\begin{equation}\label{eq5.18}
\begin{split}
\, &\int_{\mathcal{C}}\eta^\theta v^{1-N}g^{-m} \mathbf{Tr}\{E^2\}\mathrm{d}x-Nm\int_{\mathcal{C}}\varphi v^{1-N}g^{-m-1}{\bf{a}}_j(\nabla v)E_{ij}E_i\mathrm{d}x\\
 = & \beta\int_{\mathcal{C}}\eta^\theta v^{-N}g^{-m-1}\mathbf{Tr}\{E^2\}\mathrm{d}x+(\alpha-Nm)\int_{\mathcal{C}}\eta^\theta v^{-N}g^{-m-1}|\nabla v|^p \mathbf{Tr}\{E^2\}\mathrm{d}x\\
 \,&\quad +Nm\int_{\mathcal{C}}\eta^\theta v^{-N}g^{-m-1}|\nabla v|^p \left(\mathbf{Tr}\{E^2\}-\frac{v_{j}}{|\nabla v|}E_{ij}\cdot\frac{v_{k}}{|\nabla v|}E_{ki}\right)\mathrm{d}x .
\end{split}
\end{equation}
It follows from Lemma 2.7 in \cite{Ou} that the last integral in \eqref{eq5.18} is nonnegative. Thus by taking $m=\frac{p-1}{p}-\varepsilon_0$ with $0<\varepsilon_0<\frac{p-1}{p}$, we get
\begin{equation}\label{eq5.19}
\begin{split}
   \,&\int_{\mathcal{C}}\eta^\theta v^{1-N}g^{-m} \mathbf{Tr}\{E^2\}\mathrm{d}x-Nm\int_{\mathcal{C}}\varphi v^{1-N}g^{-m-1}{\bf{a}}_j(\nabla v)E_{ij}E_i\mathrm{d}x\\
\geq & \beta\int_{\mathcal{C}}\eta^\theta v^{-N}g^{-m-1}\mathbf{Tr}\{E^2\}\mathrm{d}x+N\varepsilon_0\int_{\mathcal{C}}\eta^\theta v^{-N}g^{-m-1}|\nabla v|^p \mathbf{Tr}\{E^2\}\mathrm{d}x\\
  =  & \frac{p}{p-1}\varepsilon_0\Big[\frac{p-1}{p\varepsilon_0}\beta\int_{\mathcal{C}}\eta^\theta v^{-N}g^{-m-1}\mathbf{Tr}\{E^2\}\mathrm{d}x+\alpha\int_{\mathcal{C}}\eta^\theta v^{-N}g^{-m-1}|\nabla v|^p \mathbf{Tr}\{E^2\}\Big]\mathrm{d}x\\
\geq & \frac{p}{p-1}\varepsilon_0\int_{\mathcal{C}}\eta^\theta v^{1-N}g^{-m} \mathbf{Tr}\{E^2\}\mathrm{d}x.
\end{split}
\end{equation}
Combining \eqref{eq5.19} with \eqref{eq5.16}, we derive

\begin{equation}\label{eq5.20}
 \frac{p}{p-1}\varepsilon_0\int_{\mathcal{C}}\eta^\theta v^{1-N}g^{-m} \mathbf{Tr}\{E^2\}\mathrm{d}x\leq-\theta\int_{\mathcal{C}} \eta^{\theta-1} v^{1-N}g^{-m}{\bf{a}}_j(\nabla v)E_{ij}\eta_i\mathrm{d}x .
\end{equation}
From Corollary 2.6 in \cite{Ou}, we get
\begin{equation}\label{eq5.21}
-\theta\eta^{-1}\eta_i{\bf{a}}_j(\nabla v)E_{ij} \leq \frac{C\theta^2}{\varepsilon}\eta^{-2}\eta_i\eta_j{\bf{a}}_i(\nabla v){\bf{a}}_j(\nabla v)+ \varepsilon \mathbf{Tr}\{E^2\},
\end{equation}
where $\varepsilon>0$ small enough. By \eqref{eq5.20} and \eqref{eq5.21}, we conclude that
\begin{equation}\label{eq5.22}
\left(\frac{p}{p-1}\varepsilon_0-\varepsilon\right)\int_{\mathcal{C}}\eta^\theta v^{1-N}g^{-m} \mathbf{Tr}\{E^2\}\mathrm{d}x
\leq \frac{C\theta^2}{\varepsilon}\int_{\mathcal{C}} \eta^{\theta-2} v^{1-N}g^{-m}\eta_i\eta_j{\bf{a}}_i(\nabla v){\bf{a}}_j(\nabla v)\mathrm{d}x .
\end{equation}
Since $|\eta_i\eta_j|\leq \frac{C}{R^2}$ and $|{\bf{a}}_i(\nabla v){\bf{a}}_j(\nabla v)|\leq |\nabla v|^{2p-2}$,
by letting $\varepsilon= \frac{p}{2(p-1)}\varepsilon_0$  we deduce from \eqref{eq5.22} that
\begin{equation}\label{eq5.23}
\int_{\mathcal{C}}\eta^\theta v^{1-N}g^{-m} \mathbf{Tr}\{E^2\}\mathrm{d}x\leq \frac{C}{R^2}\int_{\mathcal{C}} \eta^{\theta-2} v^{1-N}g^{-m}|\nabla v|^{2p-2}\mathrm{d}x.
\end{equation}
Moreover, since
$$|\nabla v|\leq \left(\frac{pvg}{N(p-1)}\right) ^{\frac{1}{p}},$$
choosing $\theta>2$ large enough, and combining \eqref{eq5.15} with \eqref{eq5.23}, we arrive at
\begin{equation*}
  \begin{aligned}
   \int_{\mathcal{C}}\eta^\theta v^{1-N}g^{-m} \mathbf{Tr}\{E^2\}\mathrm{d}x&\leq \frac{C}{R^2}\int_{\mathcal{C}} \eta^{\theta-2} v^{1-N}g^{-m}(vg)^{\frac{2p-2}{p}}\mathrm{d}x \\
   &\leq \frac{C}{R^2}\int_{B_{2R}\cap\mathcal{C}}  v^{-\frac{Np-3p+2}{p}}g^{\frac{p-1}{p}+\varepsilon}\mathrm{d}x.
  \end{aligned}
\end{equation*}
Choosing $\varepsilon_0>0$ small enough, such that $0<\frac{p-1}{p}+\varepsilon<1$, then by the definition of $g$, we have
\begin{equation*}
\int_{B_{2R}\cap\mathcal{C}}  v^{-\frac{Np-3p+2}{p}}g^{\frac{p-1}{p}+\varepsilon}\mathrm{d}x\leq C\int_{B_{2R}\cap\mathcal{C}}  \left(v^{-(\frac{Np-3p+2}{p}+\frac{p-1}{p}+\varepsilon)}|\nabla v|^{p-1+p\varepsilon}+ v^{-(\frac{Np-3p+2}{p}+\frac{p-1}{p}+\varepsilon)}\right) \mathrm{d}x.
\end{equation*}
Next, since $\frac{N+1}{3}<p<N$, by choosing $\varepsilon_0>0$ small enough, one has
$$\frac{Np-3p+2}{p}+\frac{p-1}{p}+\varepsilon<\frac{Np-N+p}{p}=\alpha+1,$$
and
$$N-\min\left\{\frac{Np-3p+2}{p}+\frac{p-1}{p}+\varepsilon,\frac{p(\frac{Np-3p+2}{p}+\frac{p-1}{p}+\varepsilon)-(p-1+p\varepsilon)}{p-1}\right\} <2.$$
Then it follows from Lemma \ref{lem5.5} that
$$\int_{B_{2R}\cap\mathcal{C}}  v^{-\frac{Np-3p+2}{p}}g^{\frac{p-1}{p}+\varepsilon}\mathrm{d}x=o(R^2)\qquad\text{as}\quad R\rightarrow+\infty, $$
this gives that
\begin{equation}\label{eq5.34}
\int_{\mathcal{C}}  v^{1-N}g^{-m} \mathbf{Tr}\{E^2\}\leq 0.
\end{equation}
Moreover, by the proof of Corollary 2.6 in \cite{Ou}, we know that $\mathbf{Tr}\{E^2\}\geq 0$, and the equality holds if and only if $E=0$. So \eqref{eq5.34} implies that $E=0$ $a.e$ in $\mathcal{C}$, especially $E\equiv 0$ in $\mathcal{C}^{c}_{cr}$. Thus
$v=C_1+C_2|x-x_0|^{\frac{p}{p-1}}$
for some $C_1,C_2>0$ and $x_0\in \overline{\mathcal{C}}$, then $u(x)=U_{\lambda,x_0}$ for some constant $\lambda>0$. Finally, by the Neumann boundary condition ${\bf{a}}(\nabla u)\cdot\nu=0$ on $\partial\mathcal{C}$, we deduce $x_0\in\mathcal{V}$, and hence \\
$(\rm{i})$ if $k=N$, then $\mathcal{C}=\mathbb{R}^N$ and $x_0$ may be a generic point in $\mathbb{R}^N$;\\
  $(\rm{ii})$ if  $k\in\{1,\cdots ,N-1\}$, then $x_0\in\mathbb{R}^k\times\{0_{\mathbb{R}^{N-k}}\}$;\\
  $(\rm{iii})$ if $k=0$, then $x_0=0$.

This concludes our proof of Theorem \ref{cla}. \qed

\end{document}